\documentclass[12pt]{amsart}
\usepackage{amssymb,amsmath,amscd,enumerate,verbatim,fullpage}
\usepackage[all]{xy}
\usepackage{enumerate}  

\textwidth=14.5 truecm
\textheight= 21.70 truecm
\oddsidemargin= 1cm 
\evensidemargin= 1cm
\footskip = 1cm
\parskip = 0.1cm
\pagestyle{plain}

\numberwithin{equation}{section}

\newtheorem{thm}{\bf Theorem}[section]
\newtheorem{lem}[thm]{\bf Lemma}

\newtheorem{cor}[thm]{\bf Corollary}
\newtheorem{prop}[thm]{\bf Proposition}

\theoremstyle{definition}
\newtheorem{ex}[thm]{Example}
\newtheorem{rem}[thm]{Remark}
\newtheorem{quest}[thm]{Question}
\newtheorem*{thm*}{Theorem}

\DeclareMathOperator{\depth}{depth}

\DeclareMathOperator{\height}{ht}

\DeclareMathOperator{\pd}{pd}

\DeclareMathOperator{\grade}{grade}

\DeclareMathOperator{\conv}{conv}

\newcommand{\ZZ}{{\mathbb Z}}
\newcommand{\NN}{{\mathbb N}}

\newcommand{\RR}{{\mathbb R}}

\def\D{{\Delta}}

\def\F{{\mathcal F}}

\def\mm{{\mathfrak m}}
\def\nn{{\mathfrak n}}

\def\a{{\mathbf a}}
\def\b{{\mathbf b}}
\def\c{{\mathbf c}}
\def\e{{\mathbf e}}

\def\v{{\mathbf v}}

\begin{document}

\title{Depth functions of symbolic powers\\ of homogeneous ideals}

\author{Hop Dang Nguyen}
\address{Institute of Mathematics \\ Vietnam Academy of Science and Technology, 18 Hoang Quoc Viet \\ Hanoi, Vietnam}
\email{ngdhop@gmail.com}

\author{Ngo Viet Trung}
\address{International Centre for Research and Postgraduate Training\\ Institute of Mathematics \\ Vietnam Academy of Science and Technology, 18 Hoang Quoc Viet \\ Hanoi, Vietnam}
\email{nvtrung@math.ac.vn}

\begin{abstract}
This paper addresses the problem of comparing minimal free resolutions of symbolic powers of an ideal. Our investigation is focused on the behavior of the function $\depth R/I^{(t)} = \dim R - \pd I^{(t)} - 1$, where $I^{(t)}$ denotes the $t$-th symbolic power of a homogeneous ideal $I$ in a noetherian polynomial ring $R$ and $\pd$ denotes the projective dimension. 

It has been an open question whether the function $\depth R/I^{(t)}$ is non-increasing if $I$ is a squarefree monomial ideal. We show that $\depth R/I^{(t)}$ is almost non-increasing in the sense that $\depth R/I^{(s)} \ge \depth R/I^{(t)}$ for all $s \ge 1$ and $t \in E(s)$, where
$$E(s) = \bigcup_{i \ge 1}\{t \in \NN|\ i(s-1)+1 \le t \le is\}$$
(which contains all integers $t \ge (s-1)^2+1$). The range $E(s)$ is the best possible since we can find squarefree monomial ideals $I$ such that $\depth R/I^{(s)} < \depth R/I^{(t)}$ for $t \not\in E(s)$, which gives a negative answer to the above question.

Another open question asks whether the function $\depth R/I^{(t)}$ is always constant for $t \gg 0$.  We are able to construct counter-examples to this question by monomial ideals. On the other hand,
we show that if $I$ is a monomial ideal such that $I^{(t)}$ is integrally closed for $t \gg 0$ (e.g. if $I$ is a squarefree monomial ideal), then $\depth R/I^{(t)}$ is constant for $t \gg 0$ with  
$$\lim_{t \to \infty}\depth R/I^{(t)} = \dim R - \dim \oplus_{t \ge 0}I^{(t)}/\mm I^{(t)}.$$  

Our last result (which is the main contribution of this paper) shows that for any positive numerical function $\phi(t)$ which is periodic for $t \gg 0$, there exist a polynomial ring $R$ and a homogeneous ideal $I$ such that $\depth R/I^{(t)} = \phi(t)$ for all $t \ge 1$. As a consequence, for any non-negative numerical function $\psi(t)$ which is periodic for $t \gg 0$, there is a homogeneous ideal $I$ and a number $c$ such that $\pd I^{(t)} = \psi(t) + c$ for all $t \ge 1$.
\end{abstract}

\subjclass[2010]{13C15, 14B05}
\keywords{symbolic power, projective dimension, depth, asymptotic behavior, monomial ideal, 
integrally closed ideal, degree complex, local cohomology, Bertini-type theorem, system of linear diophantine inequalities}
\maketitle


\section*{Introduction}
\label{sect_intro}

Throughout this paper, let $R$ be a Noetherian polynomial ring over a field $k$ and $I$ a homogeneous ideal of $R$. For every integer $t \ge 0$, the $t$-th {\it symbolic power} $I^{(t)}$ is defined as the intersection of the primary components of $I^t$ associated with the minimal primes of $I$.
When $I$ is the defining ideal of a reduced affine scheme $V$ over an algebraically closed field of characteristic zero, Zariski and Nagata showed that $I^{(t)}$ is the set of polynomials whose partial derivatives of orders up to $t-1$ vanish on $V$ (see e.g. Eisenbud and Hochster \cite{EH}). Their result gives a beautiful geometric interpretation of the symbolic powers.  

It is usually very difficult to study the behavior of symbolic powers. One of the reasons is that the {\em symbolic Rees algebra}
$$R_s(I) := \bigoplus_{t \ge 0}I^{(t)}$$
is in general  not finitely generated (see e.g. Roberts \cite{Ro}, Huneke \cite{Hu}, or Cutkosky \cite{Cu}). 
Most of the results until now have dealt with containments between symbolic and ordinary powers, 
which are initiated by works of Eisenbud and Mazur \cite{EM}, Ein, Lazarsfeld and Smith \cite{ELS}, and Hochster and Huneke \cite{HoHu}. 

It is of great interest to know whether there are relationships between the minimal free resolutions of different symbolic powers of $I$. The first important invariant of the minimal free resolution of an $R$-module $M$ is its length, which equals the projective dimension $\pd(M)$ of $M$.  By the Auslander-Buchsbaum formula, we have
$$\pd I^{(t)} = \dim R - \depth R/I^{(t)} - 1.$$
Since the depth can be characterized by other means, it is relatively easier to investigate the depth than the projective dimension. The aim of this paper is to study behavior of the function $\depth R/I^{(t)}$, $t \ge 1$.

Our investigation is inspired by results on the function $\depth R/I^t$, $t \ge 1$.
Due to Brodmann \cite{Br}, this function is convergent, i.e. $\depth R/I^t$ is constant for $t \gg 0$. 
In general, $\depth R/I^t$ tends to be a non-increasing function \cite{HH, HQ}. 
This was conjectured to be true for all squarefree monomial ideals 
until a counter-example was found by graph theorists \cite{KSS}.  
Step by step, one has realized that the function $\depth R/I^t$ can behave arbitrarily \cite{BHH,HNTT,HH,MST,MV}.  
Herzog and Hibi \cite{HH} conjectured that for any convergent non-negative numerical function $\phi(t)$, there exists a homogeneous ideal $I$ such that $\depth R/I^t = \phi(t)$ for all $t \ge 1$. 
This conjecture was recently settled in the affirmative \cite{HNTT}.  \par

To study the function $\depth R/I^{(t)}$ is harder than the function $\depth R/I^t$ because of the subtle nature of the symbolic powers. 
It has been an open question whether the function $\depth R/I^{(t)}$ is non-increasing if $I$ is a squarefree monomial ideals. This question has a positive answer for several classes of squarefree monomial ideals \cite{CPSTY, HKTT, KTY, TT2}.
We can show that the function $\depth R/I^{(t)}$ is almost non-increasing if $I$ is a monomial ideal such that $I^{(t)}$ is integrally closed for all $t \ge 1$ (that condition is satisfied if $I$ is squarefree). This is a consequence of the following result.
\medskip

\noindent {\bf Theorem \ref{decreasing}.}
{\em Let $I$ be a monomial ideal and $s \ge 1$ such that $I^{(s)}$ is integrally closed. Then \par
{\rm (i)} $\depth R/I^{(s)} \ge \depth R/I^{(st)}$ for all $t \ge 1$,\par
{\rm (ii)} there is a constant $a$ such that $\depth R/I^{(s)}\! \ge \depth R/I^{(t)}$ for $t \ge as^2$.}
\medskip

The proof of Theorem \ref{decreasing} involves techniques from combinatorial topology and linear programming. 
For squarefree monomial ideals, we can even determine the set of the numbers $t$ for which $\depth R/I^{(s)} \ge \depth R/I^{(t)}$. Set 
$$E(s) = \bigcup_{i \ge 1}\{t \in \NN|\ i(s-1)+1 \le t \le is\}.$$
Note that $E(s)$ contains all integers $t \ge (s-1)^2+1$. In particular, $E(s)$ is the set of all intergers $t \ge s$ 
only for $s = 1,2$.
\medskip

\noindent {\bf Theorems \ref{squarefree} and \ref{prop_example_increasing}.}
{\em 
For all $s \ge 1$, $E(s)$ is exactly the set of the exponents $t$ for which the inequality $\depth R/I^{(s)} \ge \depth R/I^{(t)}$ holds for every squarefree monomial ideal $I$.}
\medskip

Theorem \ref{squarefree} displays an unusual behavior of the depth of the symbolic powers, namely that 
$\depth R/I^{(s)} \ge \depth R/I^{(t)}$ for $t \in E(s)$, which is a union of disjoint intervals if $s \ge 3$. 
For instance, $\depth R/I^{(3)} \ge \depth R/I^{(t)}$ for $t \ge 5$ and $\depth R/I^{(4)} \ge \depth R/I^{(t)}$ for $t = 7,8$ and $t \ge 10$. 

Theorem \ref{prop_example_increasing} shows that for each $s \ge 3$, there exists a squarefree monomial ideal $I$ such that $\depth R/I^{(s)} < \depth R/I^{(t)}$ if and only if $t \not\in E(s)$.
In particular, this gives a negative answer to the open question whether the function $\depth R/I^{(t)}$ is always non-increasing. 

It is known that the symbolic Rees algebra $R_s(I)$ of a monomial ideal $I$ is finitely generated \cite{HHT}.  Using this fact one can deduce that the function $\depth R/I^{(t)}$ is {\em asymptotically periodic}, i.e. periodic for $t \gg 0$ (Proposition \ref{Rees}). However, 
it has been an open question whether $\depth R/I^{(t)}$ is always a convergent function \cite{HKTT}. \par

We will show that this question has a positive answer for a very large class of monomial ideals (including all squarefree monomial ideals) and that there is a formula for the asymptotic value in terms of the symbolic fiber ring
$$F_s(I) := \bigoplus_{t \ge 0} I^{(t)}/\mm I^{(t)},$$
where $\mm$ is the maximal homogeneous ideal of $R$. 
\medskip

\noindent {\bf Theorem \ref{constant}.}
{\em Let $I$ be a monomial ideal in $R$ such that $I^{(t)}$ is integrally closed for $t \gg 0$. 
Then $\depth R/I^{(t)}$ is a convergent function with 
$$\lim_{t \to\infty} \depth R/I^{(t)} = \dim R - \dim F_s(I),$$
which is also the minimum of $\depth R/I^{(t)}$ among all integrally closed symbolic powers $I^{(t)}$.}
\medskip

The convergence of the function $\depth R/I^{(t)}$ can be also deduced from results of \cite{HT1}, which investigated the Castelnuovo-Mumford regularity of integral closures of ideals of the form $I_1^t \cap \cdots \cap I_p^t$, where $I_1,...,I_p$ are monomial ideals. However, \cite{HT1} did not provide any information on $\lim_{t\to\infty} \depth R/I^{(t)}$. 
If $I$ is a squarefree monomial ideal, we immediately obtain the formula
$$\lim_{t \to\infty} \depth R/I^{(t)} = \min_{t \ge1} \depth R/I^{(t)} =  \dim R - \dim F_s(I),$$
which was proved recently in \cite{HKTT}. 
We also show that $I^{(t)}$ is often integrally closed for $t \gg 0$ 
if $I$ is the intersection of primary ideals generated by forms of the same degree (Proposition \ref{primary}).
 \par
 
For simplicity, we call a numerical function $\phi(t)$  a {\em symbolic depth function} over $k$ if there exist a polynomial ring $R$ over $k$ and a relevant homogeneous ideal $I \subset R$ such that $\depth R/I^{(t)} = \phi(t)$ for $t \ge 1$. Note that a symbolic depth function is always a positive numerical function. \par

We establish a method of constructing symbolic depth functions which only take the values 1 and 2 depending on a single ideal-theoretical containment (Proposition \ref{lem_key}). To check this containment amounts to solving systems of linear diophantine inequalities. Using this method we construct monomial ideals whose symbolic depth functions are not convergent, thereby giving a negative answer to the afore mentioned open question:
\medskip

\noindent {\bf Theorem \ref{periodic}.} 
{\em Let $m\ge 2$ and $0 \le d < m$ be integers. 
There exists a monomial ideal $I$ in $R=k[x,y,z,u,v]$ such that
$$
\depth R/I^{(t)}= \begin{cases}
2 &\text{if $t \equiv d$ modulo $m$},\\
1 &\text{otherwise}.
\end{cases}
$$}

The next problem is how wild a symbolic depth function could be. 
Surprisingly, we can show that any asymptotically periodic positive numerical function is a symbolic depth function.  
The idea is to construct ideals with basic symbolic depth functions 
and to build up any asymptotically periodic positive numerical function by using closed operations
within the class of symbolic depth functions. \par

First, there is a simple way to obtain from two symbolic depth functions $\phi(t)$ and $\psi(t)$ a new symbolic depth function $\phi(t)+\psi(t)+1$ (Proposition \ref{product}).  
However, the new symbolic depth function has higher values. 
For instance, we always have $\phi(t)+\psi(t)+1 \ge 3$ for all $t \ge 1$.
Therefore, we need a Bertini-type theorem to reduce the depth of symbolic powers. 
For a homogeneous ideal $I$ with $\depth R/I^{(t)} \ge 2$ for all $t \ge 1$, we have to find a linear form $f \in R$ such that $f$ is a regular element on $I^{(t)}$ and if we set $S = R/(f)$ and $Q = (I,f)/(f)$, then  
$$S/Q^{(t)} = R/(I^{(t)},f)$$
for all $t \ge 1$.
There is an obstacle in finding such a linear form, namely that $f$ has to be
the same element for all symbolic powers $I^{(t)}$, which form an infinite family of ideals. 
However, using a generic linear form in a polynomial ring over a purely transcendental extension of $k$ we can prove such a Bertini-type theorem (Proposition \ref{Bertini}). \par

Using the operation $\phi(t)+\psi(t)$ and the above Bertini-type theorem we can build up any asymptotically periodic positive numerical function from some basic symbolic depth functions. 
In this way we obtain the following result, which is the main contribution of this paper:
\medskip

\noindent {\bf Theorem \ref{ubiquity}.}
{\em Let $\phi(t)$ be an arbitrary asymptotically periodic positive numerical function.
Given a field $k$, there exist a polynomial ring $R$ over a purely transcendental extension of $k$ and a
homogeneous ideal $I \subset R$ such that $\depth R/I^{(t)}=\phi(t)$ for $t \ge 1$.}
\medskip

The hardest part of the proof of Theorem \ref{ubiquity} is to construct ideals with basic symbolic depth functions.
The constructions contain new ideas and techniques, which can be used to find ideals whose symbolic powers have irregular behavior with respect to other homological invariants.  
\medskip

\noindent {\bf Theorem \ref{pd}.} 
{\em Let $\psi(t)$ be an arbitrary asymptotically periodic non-negative numerical function and $m = \max_{t \ge 1}\psi(t)$. 
Given a field $k$, there is a number $c$ such that there exist a polynomial ring $R$ 
in $m+c+2$ variables over a purely transcendental extension of $k$ and a
homogeneous ideal $I \subset R$ for which $\pd I^{(t)} = \psi(t) + c$ for $t\ge 1$.}
\medskip

It is of great interest to know all possible functions of the projective dimension of symbolic powers of a homogeneous ideal. For that one needs to compute the smallest number $c$ of Theorem \ref{pd} for each function $\psi(t)$.
This number is determined by the smallest number of variables of a polynomial ring $R$ which contains a homogeneous ideal with a given symbolic depth function.
However, we are not able to compute that number. 
\medskip

Theorem \ref{ubiquity} leaves some questions unanswered. 
We begin with monomial ideals having basic symbolic depth functions over $k$ and end up 
with non-monomial ideals having any given asymptotically periodic symbolic depth function over a purely transcendental extension of $k$. That is unlike the case of ordinary powers, where any convergent non-negative numerical function is the depth function of a monomial ideal over $k$ \cite{HNTT}. 
This leads us to the following question:
\medskip

\noindent {\bf Question \ref{field}.}
Given any asymptotically periodic positive numerical function $\phi(t)$, do there exist a polynomial ring $R$ over any field $k$ and a monomial ideal $I \subset R$ such that $\depth R/I^{(t)} = \phi(t)$ for all $t \ge 1$?
\medskip

Finally, we would like to point out that there is no known example of a homogeneous ideal whose symbolic depth function is not asymptotically periodic. To find such an ideal is a hard problem because its symbolic Rees algebra has to be non-Noetherian, whose existence is related to Hilbert's fourteenth problem \cite{Ro}.
It is not clear whether such an ideal exists at all. \par

This paper is organized as follows. 
Section 1 prepares results on the depth of monomial ideals, which will be used to estimate the depth of symbolic powers.
In Section 2 we compare the depth of an integrally closed symbolic power with that of higher symbolic powers.
Section 3 investigates monomial ideals whose symbolic depth functions are convergent.
In Section 4 we construct ideals with basic symbolic depth functions. 
Section 5 shows how to manipulate existing symbolic depth functions to obtain new ones.
Section 6 is devoted to the proof that any asymptotically periodic positive numerical function is a symbolic depth function. \par

For unexplained notions and standard facts in commutative algebra we refer the reader to \cite{BH, HS}. 
\medskip

\noindent{\bf Acknowledgement}.
Hop Dang Nguyen is partially supported by Project CT 0000.03/19-21 of Vietnam Academy of Science and Technology.
Ngo Viet Trung is partially supported by Vietnam National Foundation for Science and Technology Development. Part of this work was done during research stays of the authors at Vietnam Institute for Advanced Study in Mathematics. The authors would like to thank Huy T\`ai H\`a and Tran Nam Trung for their collaboration on the joint paper \cite{HNTT} which initiated this work. They are also grateful to the referee for many suggestions which help improve the presentation of the paper. After the revision of this paper, the authors have been informed that Theorem 2.7  has been recently obtained in a modified form by different methods by J. Montano and L. Nunez-Betancourt (arXiv 1809.02308) and S. A. Seyed Fakhari  (arXiv 1812.03742).


\section{Depth of monomial ideals}
\label{sect_convergent_monomial}

Let $R = k[x_1,...,x_n]$ be a polynomial ring over a field $k$ and $I$ a monomial ideal in $R$.  
Let $H_\mm^i(R/I)$ denote the $i$-th local cohomology of $R/I$ with support at the maximal homogeneous ideal $\mm$ of $R$.
It is well-known that
$$\depth R/I = \min\{i \mid H_\mm^i(R/I) \neq 0\}.$$
Since $R/I$ has a natural $\NN^n$-graded structure, 
the local cohomology module $H_\mm^i(R/I)$ has a $\ZZ^n$-graded structure.
We have the following formula on the dimension of the graded component $H_\mm^i(R/I)_\a$, $\a \in \ZZ^n$.
 
For $\a = (a_1,...,a_n) \in \ZZ^n$, set $x^\a = x_1^{a_1} \cdots x_n^{a_n}$.
We denote by $G_\a$ the negative support of $\a$, i.e. $G_\a := \{i \in [n] ~\big|~ a_i < 0\}$, 
where $[n] = \{1,...,n\}$.
For every subset $F \subseteq [n]$, let $R_F = R[x_i^{-1}|\ i \in F]$. Define
\begin{equation*}
\D_\a(I)  := \{F \setminus G_\a|\ G_\a \subseteq F,\  x^\a \not\in IR_F\},
\end{equation*}
which is a simplicial complex on the vertex set $[n]$. We call it the \emph{degree complex} of $I$ with respect to $\a$. \par

\begin{thm} \label{Takayama}  \cite[Theorem 1]{Ta}
$\dim_kH_\mm^i(R/I)_\a =  \dim_k\widetilde H_{i-|G_\a|-1}(\D_\a(I),k).$
\end{thm}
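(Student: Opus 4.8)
The plan is to compute the local cohomology modules $H_\mm^i(R/I)$ through the $\ZZ^n$-graded \v{C}ech complex on $x_1,\dots,x_n$ and, after restricting to a fixed multidegree $\a$, to recognize the resulting strand as a degree shift of the reduced simplicial cochain complex of the degree complex $\D_\a(I)$. This is the natural common generalization of Hochster's formula for Stanley--Reisner rings, and the argument is essentially the \v{C}ech-complex proof of that formula carried out for an arbitrary monomial ideal and an arbitrary $\a$.

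First I would recall that, since $\mm=(x_1,\dots,x_n)$, the graded module $H_\mm^\bullet(R/I)$ is the cohomology of the \v{C}ech complex $\mathcal{C}^\bullet\otimes_R R/I$, whose term in cohomological degree $p$ is $\bigoplus_{F\subseteq[n],\,|F|=p}R_F/IR_F$ (with $R_F=R[x_i^{-1}\mid i\in F]$ as in the paper) and whose differential is the alternating sum of the natural localization maps $R_F/IR_F\to R_{F'}/IR_{F'}$ for $F\subset F'$, $|F'|=|F|+1$. This complex is $\ZZ^n$-graded and its differential has degree $0$, so it is enough to prove the asserted equality after passing to the degree-$\a$ strand: $H_\mm^i(R/I)_\a=H^i\big((\mathcal{C}^\bullet\otimes R/I)_\a\big)$ for each $\a\in\ZZ^n$.

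Because $I$ is a monomial ideal, $R_F/IR_F$ has a $k$-basis given by the monomials $x^\b$ with $b_i\ge 0$ for $i\notin F$ and $x^\b\notin IR_F$; hence $(R_F/IR_F)_\a$ is one-dimensional when $G_\a\subseteq F$ and $x^\a\notin IR_F$, and is zero otherwise. Writing such an $F$ uniquely as $F=G_\a\sqcup\sigma$ with $\sigma\subseteq[n]\setminus G_\a$, the nonvanishing condition is precisely $\sigma\in\D_\a(I)$, and $|F|=|G_\a|+|\sigma|$. Thus, in cohomological degree $p=|G_\a|+q$, there is a canonical identification
$$(\mathcal{C}^{\,p}\otimes R/I)_\a\ \cong\ \bigoplus_{\sigma\in\D_\a(I),\ \dim\sigma=q-1}k\ =\ \widetilde C^{\,q-1}(\D_\a(I);k),$$
the reduced simplicial cochain group, where $q=0$ records the empty face (which belongs to $\D_\a(I)$ exactly when $x^\a\notin IR_{G_\a}$) and, when $x^\a\in IR_{G_\a}$, the entire strand vanishes, in agreement with the convention that the void complex has zero cochains. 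It then remains to check that under this identification the \v{C}ech differential in degree $\a$ becomes the simplicial coboundary of $\D_\a(I)$: each component $R_{G_\a\sqcup\sigma}\to R_{G_\a\sqcup(\sigma\cup\{v\})}$ is $\pm\mathrm{id}_k$ in degree $\a$, and rescaling the basis vector attached to a face $\sigma$ by $(-1)^{\#\{(g,s)\,:\,g\in G_\a,\ s\in\sigma,\ g<s\}}$ turns the \v{C}ech sign into the incidence number of $\sigma$ in $\sigma\cup\{v\}$. This produces an isomorphism of cochain complexes $(\mathcal{C}^\bullet\otimes R/I)_\a\cong\widetilde C^{\,\bullet-|G_\a|-1}(\D_\a(I);k)$.

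Taking cohomology gives $H_\mm^i(R/I)_\a\cong\widetilde H^{\,i-|G_\a|-1}(\D_\a(I);k)$, and since everything is a $k$-vector space, $\dim_k\widetilde H^{\,j}(\D_\a(I);k)=\dim_k\widetilde H_j(\D_\a(I);k)$, which is the stated formula. I expect the main difficulty to be bookkeeping rather than conceptual: arranging the orientation signs so that the \v{C}ech differential is literally the simplicial coboundary, and disposing correctly of the degenerate situations --- the void degree complex, the role of the empty face, and values of $i$ with $i-|G_\a|-1<-1$ for which both sides vanish. These are precisely the points that must be handled with care for the statement to hold exactly as written.
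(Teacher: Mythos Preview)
Your proof is correct and is precisely the standard \v{C}ech-complex argument due to Takayama; the paper itself does not supply a proof but merely cites \cite[Theorem 1]{Ta} (with the remark that the simplified definition of $\D_\a(I)$ and the removal of the original restrictions on $\a$ are justified in \cite{MT}). So there is nothing to compare against beyond noting that your approach is exactly the one underlying the cited reference.
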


The above definition of a degree complex is simpler than the original construction in \cite{Ta}. 
Moreover, the original result contains additional conditions on $\a$.  
However, the original proof shows that we may drop these conditions, 
which is more convenient for our investigation. See \cite{MT} for more details.

\begin{cor} \label{depth} 
$\depth R/I =  \min\{|G_\a| +j|\ \a \in \ZZ^n, j \ge 0, \widetilde H_{j-1}(\D_\a(I),k) \neq 0\}.$
\end{cor}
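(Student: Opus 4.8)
The plan is to combine the local cohomology characterization of depth with Takayama's formula (Theorem \ref{Takayama}). Recall that $\depth R/I = \min\{i \mid H_\mm^i(R/I) \neq 0\}$, and since $R/I$ is $\NN^n$-graded, each $H_\mm^i(R/I)$ decomposes as $\bigoplus_{\a \in \ZZ^n} H_\mm^i(R/I)_\a$. Hence $H_\mm^i(R/I) \neq 0$ if and only if there exists $\a \in \ZZ^n$ with $H_\mm^i(R/I)_\a \neq 0$, and therefore
$$\depth R/I = \min\{i \mid \exists\, \a \in \ZZ^n \text{ with } H_\mm^i(R/I)_\a \neq 0\}.$$

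Next I would rewrite the condition $H_\mm^i(R/I)_\a \neq 0$ using Theorem \ref{Takayama}, which says $\dim_k H_\mm^i(R/I)_\a = \dim_k \widetilde H_{i-|G_\a|-1}(\D_\a(I),k)$. Thus $H_\mm^i(R/I)_\a \neq 0$ is equivalent to $\widetilde H_{i-|G_\a|-1}(\D_\a(I),k) \neq 0$. Setting $j = i - |G_\a|$, the pair $(i,\a)$ with $H_\mm^i(R/I)_\a \neq 0$ corresponds to a triple $(\a, j)$ with $\widetilde H_{j-1}(\D_\a(I),k) \neq 0$ and $i = |G_\a| + j$. One should note that $j \ge 0$ is automatic: indeed $\D_\a(I)$ is a simplicial complex on $[n]$, and if it is nonempty then $\widetilde H_{-1}(\D_\a(I),k) = 0$, so the only way to have nonvanishing reduced homology in degree $j-1 < 0$ would be $j - 1 = -1$ with $\D_\a(I) = \emptyset$ (the void complex), which contributes $\widetilde H_{-1}(\emptyset, k) = k \neq 0$ and corresponds to $j = 0$; in all other cases $j \ge 1$. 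Either way $j \ge 0$, so restricting to $j \ge 0$ in the minimum loses nothing.

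Combining the two displayed equalities yields
$$\depth R/I = \min\{\,|G_\a| + j \mid \a \in \ZZ^n,\ j \ge 0,\ \widetilde H_{j-1}(\D_\a(I),k) \neq 0\,\},$$
as claimed. The only point requiring a little care is checking that the index set on the right is nonempty so that the minimum is well-defined, and that it does not produce a value smaller than the true depth — both follow because the correspondence $(i,\a) \leftrightarrow (\a, j=i-|G_\a|)$ is a bijection between the two index sets under which $i = |G_\a| + j$. This is essentially bookkeeping; there is no substantial obstacle, the corollary being a direct translation of Theorem \ref{Takayama} through the definition of depth via local cohomology.
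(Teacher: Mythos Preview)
Your proof is correct and follows essentially the same approach as the paper: invoke the local-cohomology characterization of depth, apply Theorem~\ref{Takayama} componentwise, and substitute $j = i - |G_\a|$. The paper's version is terser---it simply says ``replacing $i$ by $|G_\a|+j$, we obtain the assertion'' without your discussion of why $j \ge 0$ (which is automatic since reduced simplicial homology vanishes in degrees below $-1$)---but the content is identical.
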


\begin{proof} 
By Theorem \ref{Takayama}, we have 
$$\depth R/I =  \min\{i|\ \text{there is } \a \in \ZZ^n \text{ such that } \widetilde H_{i-|G_\a|-1}(\D_\a(I),k) \neq 0\}.$$
Replacing $i$ by $|G_\a|+j$, we obtain the assertion.
\end{proof}

To compare the depths of two monomial ideals, one only need to compare their degree complexes.

\begin{prop} \label{compare}
Let $I$ and $J$ be two monomial ideals in $R$ such that for every $\a \in \ZZ^n$, there exists a vector $\b \in \ZZ^n$ with $G_\b \subseteq G_\a$ such that $\D_\b(J) = \D_\a(I)$. Then $\depth R/I \ge \depth R/J$.
\end{prop}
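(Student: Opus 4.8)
The plan is to reduce the depth inequality to the characterization of depth via degree complexes from Corollary~\ref{depth}. By that corollary,
$$\depth R/J = \min\{|G_\b| + j \mid \b \in \ZZ^n,\ j \ge 0,\ \widetilde H_{j-1}(\D_\b(J),k) \neq 0\},$$
and the analogous formula holds for $I$. So it suffices to show that every pair $(\a, j)$ witnessing a nonvanishing reduced homology group $\widetilde H_{j-1}(\D_\a(I),k) \neq 0$ can be matched by a pair $(\b,j)$ witnessing $\widetilde H_{j-1}(\D_\b(J),k) \neq 0$ with $|G_\b| \le |G_\a|$, so that the minimum computing $\depth R/J$ is over a set containing (up to the value $|G_\b|+j \le |G_\a|+j$) enough elements to be $\le \depth R/I$.

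Concretely, I would argue as follows. Let $m = \depth R/I$. By Corollary~\ref{depth} there exist $\a \in \ZZ^n$ and $j \ge 0$ with $\widetilde H_{j-1}(\D_\a(I),k) \neq 0$ and $|G_\a| + j = m$. Apply the hypothesis to this $\a$: there is $\b \in \ZZ^n$ with $G_\b \subseteq G_\a$ and $\D_\b(J) = \D_\a(I)$. Then $\widetilde H_{j-1}(\D_\b(J),k) = \widetilde H_{j-1}(\D_\a(I),k) \neq 0$, and $|G_\b| \le |G_\a|$ since $G_\b \subseteq G_\a$. Hence the pair $(\b, j)$ is admissible in the minimum defining $\depth R/J$, giving
$$\depth R/J \le |G_\b| + j \le |G_\a| + j = m = \depth R/I,$$
which is the desired inequality.

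There is essentially no hard obstacle here; the statement is a direct bookkeeping consequence of Corollary~\ref{depth}. The only point requiring a word of care is that the hypothesis is phrased as an equality of simplicial complexes $\D_\b(J) = \D_\a(I)$ on the common vertex set $[n]$, so that their reduced simplicial homology groups coincide \emph{as graded objects in the same degree} $j-1$; this is what lets the index $j$ be carried over unchanged, and it is why the inequality $|G_\b| \le |G_\a|$ (rather than an equality) is exactly what is needed to push the minimum down. One should also note that the hypothesis quantifies over \emph{all} $\a \in \ZZ^n$, which is more than we use---we only need it for the particular $\a$ realizing the minimum for $R/I$---but stating it for all $\a$ makes the proposition easy to apply in practice, since in later sections one typically produces the matching $\b$ by an explicit combinatorial construction valid for every $\a$.
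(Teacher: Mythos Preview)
Your proof is correct and essentially identical to the paper's own argument: both pick $\a$ realizing the minimum in Corollary~\ref{depth} for $R/I$, apply the hypothesis to obtain $\b$ with $G_\b \subseteq G_\a$ and $\D_\b(J) = \D_\a(I)$, and conclude $\depth R/J \le |G_\b| + j \le |G_\a| + j = \depth R/I$. Your additional commentary on why the equality of complexes carries the homology index over unchanged is accurate but not needed for the formal argument.
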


\begin{proof}
By Corollary \ref{depth}, we may assume that $\depth R/I =  |G_\a|+i$ for some $\a \in \ZZ^n$ and $i \ge 0$ such that  $\widetilde H_{i-1}(\D_\a(I),k) \neq 0$. Choose $\b \in \ZZ^n$ as in the assumption. Then $\depth R/I \ge |G_\b|+ i \ge \depth R/J$, where the last inequality follows from Corollary \ref{depth}.
\end{proof}

It is clear that $\depth R/I \ge 1$ if and only if $\mm$ is not an associated prime of $I$.
For  $\depth R/I \ge 2$ we have the following criterion.

\begin{prop} \label{depth2}
$\depth R/I \ge 2$ if and only if the following conditions are satisfied:\par
 {\rm (i)} $\depth R/I \ge 1$,\par
{\rm (ii)} $\depth R_j/I_j \ge 1$ for all $j = 1,...,n$, where $R_j = k[x_i|\ i \neq j]$ and $I_j = IR[x_j^{-1}] \cap R_j$. \par
{\rm (iii)}  Every degree complex $\D_\a(I)$ with $\a \in \NN^n$ is connected.
\end{prop}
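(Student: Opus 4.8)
The plan is to translate the depth bound $\depth R/I \ge 2$ into the vanishing of the two lowest local cohomology modules $H^0_\mm(R/I)$ and $H^1_\mm(R/I)$, and then read off what each of the three conditions contributes via Theorem \ref{Takayama} and Corollary \ref{depth}. Since $\depth R/I \ge 1$ is exactly the statement $H^0_\mm(R/I) = 0$, condition (i) is unavoidable and equivalent to $\mm \notin \Ass(I)$; so the real content is to characterize when, additionally, $H^1_\mm(R/I) = 0$. By Corollary \ref{depth}, a nonzero $H^1_\mm(R/I)_\a$ forces $|G_\a| + j = 1$ with $\widetilde H_{j-1}(\Delta_\a(I),k) \neq 0$, so there are exactly two cases: either $|G_\a| = 0$ (i.e. $\a \in \NN^n$) and $j = 1$, meaning $\widetilde H_0(\Delta_\a(I),k) \neq 0$, i.e. $\Delta_\a(I)$ is disconnected; or $|G_\a| = 1$ and $j = 0$, meaning $\widetilde H_{-1}(\Delta_\a(I),k) \neq 0$, i.e. $\Delta_\a(I) = \{\emptyset\}$ is the void-reduced complex (equivalently $x^\a \notin IR_F$ for $F = G_\a$ but $x^\a \in IR_{F'}$ for every $F' \supsetneq G_\a$).

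First I would handle the case $|G_\a| = 1$, say $G_\a = \{j\}$. The key observation is that the degree complex $\Delta_\a(I)$ for such $\a$ is governed entirely by the localization-and-restriction $I_j = IR[x_j^{-1}] \cap R_j$ inside $R_j = k[x_i \mid i \neq j]$: writing $\a = \a' - c\e_j$ with $\a' \in \NN^n$ supported off $j$ and $c > 0$, one checks that $x^\a \in IR_F$ (for $F \ni j$) is equivalent to $x^{\a'} \in I_j R_{F\setminus\{j\}}$ in $R_j$, because inverting $x_j$ already absorbs the negative exponent and the remaining condition is a membership test in $R_j$. This gives a bijection $\Delta_\a(I) \cong \Delta_{\a'}(I_j)$ of simplicial complexes on $[n]\setminus\{j\}$, and in particular $\widetilde H_{-1}(\Delta_\a(I),k) \neq 0$ for some $\a$ with $G_\a = \{j\}$ if and only if $\widetilde H_{-1}(\Delta_{\a'}(I_j),k) \neq 0$ for some $\a' \in \NN^{[n]\setminus\{j\}}$, i.e. if and only if $H^0_{\mm_j}(R_j/I_j) \neq 0$, i.e. $\depth R_j/I_j = 0$. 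Ranging over $j = 1,\dots,n$, the vanishing of all these contributions is precisely condition (ii). The case $|G_\a| = 0$ is immediate: $\widetilde H_0(\Delta_\a(I),k) \neq 0$ for some $\a \in \NN^n$ says exactly that some $\Delta_\a(I)$ with $\a \in \NN^n$ is disconnected, so ruling this out is condition (iii). (One must also note that for $\a \in \NN^n$ the complex $\Delta_\a(I)$ always contains $\emptyset$, hence $\widetilde H_{-1}$ vanishes automatically, so $j = 0$ contributes nothing in this range — this is why (iii) only needs to speak of connectedness.)

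Assembling the pieces: $H^1_\mm(R/I) = 0$ if and only if no $\a$ produces a nonzero contribution in either case, which by the above is exactly (ii) and (iii) together. Combined with (i) $\Leftrightarrow H^0_\mm(R/I) = 0$, we get $\depth R/I \ge 2 \Leftrightarrow$ (i), (ii), (iii). The main obstacle, and the step deserving careful attention, is the identification $\Delta_\a(I) \cong \Delta_{\a'}(I_j)$ in the case $G_\a = \{j\}$ — one has to verify that the membership condition $x^\a \notin IR_F$ defining the faces genuinely descends to the membership condition in the localized-and-restricted ideal $I_j$, uniformly in the faces $F$, and that the value $c$ of the negative $j$-exponent is irrelevant once $c \ge 1$. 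Everything else is a matter of sorting the indices $\{|G_\a| + j = 1\}$ into the two admissible cases and quoting Theorem \ref{Takayama}.
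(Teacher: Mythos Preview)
Your argument is correct and follows exactly the paper's route: reduce $\depth R/I \ge 2$ to $H^0_\mm(R/I) = H^1_\mm(R/I) = 0$ and then read off conditions (i)--(iii) via Takayama's formula. The only difference is that the paper outsources the equivalence ``$H^1_\mm(R/I)=0 \Leftrightarrow$ (ii) and (iii)'' to \cite[Proposition~1.6]{TT}, whereas you supply the key identification $\Delta_\a(I) \cong \Delta_{\a'}(I_j)$ for $G_\a=\{j\}$ yourself; one small slip in your parenthetical---$\Delta_\a(I)$ need not contain $\emptyset$ when $x^\a \in I$, and containing $\emptyset$ alone does not force $\widetilde H_{-1}=0$---is harmless, since for $|G_\a|=0$ the case $j=0$ contributes to $H^0_\mm$ rather than $H^1_\mm$ and is already absorbed by condition (i).
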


\begin{proof}
It is well known that (i) means $H_\mm^0(R/I) = 0$. It is also known that (ii) and (iii) are equivalent to the condition $H_\mm^1(R/I) = 0$ \cite[Proposition 1.6]{TT}. Therefore, the conclusion follows from the fact that $\depth R/I \ge 2$ if and only if $H_\mm^i(R/I) = 0$ for $i = 0,1$.
\end{proof}

Now we will address the problem of computing degree complexes. 
By definition, every face of a degree complex $\D_\a(I)$ is of the form $F \setminus G_\a$ for a subset $F \subseteq [n]$, 
$G_\a \subseteq F$. 
For every subset $F \subseteq [n]$ we denote by $P_F$ the ideal of $R$ generated by the variables $x_i$,  $i \not\in F$. 

\begin{lem} \label{facet} 
Let $F \subseteq [n]$ such that $G_\a \subseteq F$. 
If $F \setminus G_\a$ is a facet of $\D_\a(I)$, then 
$P_F$ is an associated prime of $I$.
\end{lem}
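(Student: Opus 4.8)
The plan is to exploit the characterization of the faces of $\D_\a(I)$ in terms of membership in localized ideals $IR_F$, together with the standard description of associated primes of a monomial ideal via its irredundant irreducible (or primary) decomposition. First I would unwind what it means for $F \setminus G_\a$ to be a facet: $x^\a \notin IR_F$, but for every index $j \in [n] \setminus F$ we have $x^\a \in IR_{F \cup \{j\}}$ (since adjoining $j$ to $F$ enlarges the face $F\setminus G_\a$ by the vertex $j$, which by maximality can no longer lie in $\D_\a(I)$). The goal is to conclude that $P_F = (x_i \mid i \notin F)$ is an associated prime of $I$, equivalently that $P_F$ appears as the radical of some component in an irredundant primary decomposition $I = \bigcap_\ell Q_\ell$ of the monomial ideal $I$.

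The key step is to translate "$x^\a \notin IR_F$" into a statement about the primary decomposition. For a monomial ideal, $x^\a \in IR_F$ if and only if $x^\a \in Q_\ell R_F$ for all $\ell$, and $Q_\ell R_F$ is the unit ideal precisely when $\sqrt{Q_\ell} \subseteq P_F$, i.e. when the variables generating $\sqrt{Q_\ell}$ all have indices in $F$. So $x^\a \notin IR_F$ forces the existence of at least one component $Q_\ell$ with $\sqrt{Q_\ell} \not\subseteq P_F$ (equivalently, some variable $x_i$ with $i \notin F$ lies in $\sqrt{Q_\ell}$) and with $x^\a \notin Q_\ell R_F$. Among all such "bad" components, I would pick one whose radical $\sqrt{Q_\ell}$ is, in a suitable sense, as small as possible relative to $P_F$; the facet (maximality) hypothesis should then force $\sqrt{Q_\ell} = P_F$. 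Concretely, for each $j \notin F$, the condition $x^\a \in IR_{F\cup\{j\}}$ means every bad component for $F$ becomes good after inverting $x_j$, i.e. its radical is contained in $P_{F\cup\{j\}} = P_F + (x_j)$; combined over all $j \notin F$, this squeezes $\sqrt{Q_\ell}$ down to exactly $P_F$, so $P_F \in \Ass(R/I)$.

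The main obstacle I anticipate is handling the interaction between the exponent vector $\a$ (which may have negative entries, with negative support $G_\a \subseteq F$) and the primary components, to make sure the "minimal bad component" argument actually pins the radical to $P_F$ rather than to some strictly larger monomial prime. One has to be careful that inverting the variables in $F$ — including those in $G_\a$ — does not accidentally kill the relevant component, and that the facet condition is genuinely used for *every* $j \notin F$, not just one. I would phrase this cleanly by working with an irredundant irreducible decomposition $I = \bigcap_\ell \qq_\ell$, where each $\qq_\ell$ is generated by powers of variables, so that $x^\a \notin \qq_\ell R_F$ translates into an explicit coordinatewise inequality on $\a$; then the combinatorics of which variables appear in $\qq_\ell$ versus which indices lie in $F$ becomes transparent, and the maximality of the facet delivers $\sqrt{\qq_\ell} = P_F$ directly.
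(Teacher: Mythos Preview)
Your strategy---extracting an associated prime directly from a primary (or irreducible) decomposition of $I$ using the maximality of the facet---is sound and genuinely different from the paper's argument. The paper instead passes to $S = k[x_i \mid i \notin F]$, sets $Q = IR_F \cap S$ and $\widetilde Q = \bigcup_{t\ge 1}(Q:\nn^t)$, and then quotes two external lemmas: one from \cite{TT} saying that $F\setminus G_\a$ is a facet of $\D_\a(I)$ iff $x^\b \in \widetilde Q \setminus Q$ (with $\b$ obtained from $\a$ by zeroing the $F$-coordinates), and one from \cite{HLT} saying $\widetilde Q \neq Q$ iff $P_F \in \Ass(R/I)$. Your route is more self-contained; the paper's is shorter because the bookkeeping is hidden in the cited results.

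However, your containment directions are systematically reversed, and this is not just cosmetic. For a $P_G$-primary component $Q_\ell$ one has $Q_\ell R_F = R_F$ if and only if $F \not\subseteq G$, equivalently $\sqrt{Q_\ell} \not\subseteq P_F$; so $x^\a \notin IR_F$ yields a component with $\sqrt{Q_\ell} \subseteq P_F$ (i.e.\ $F\subseteq G$), the opposite of what you wrote. Likewise $P_{F\cup\{j\}} = (x_i \mid i \notin F,\ i\neq j)$ is strictly \emph{contained in} $P_F$, not equal to $P_F+(x_j)$. With the signs corrected the argument does go through, and the ``obstacle'' you anticipated is exactly the missing step: if $j\in G$ then the generators of $Q_\ell$ (monomials in $x_i$, $i\notin G$) do not involve $x_j$, and since $j\notin F\supseteq G_\a$ forces $a_j\ge 0$, inverting $x_j$ cannot change whether $x^\a \in Q_\ell R_F$. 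Hence $x^\a \in Q_\ell R_{F\cup\{j\}}$ forces $j\notin G$; running over all $j\notin F$ gives $G\subseteq F$, so $G=F$ and $P_F = \sqrt{Q_\ell} \in \Ass(R/I)$.
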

 
\begin{proof}
Let $S = k[x_i \mid i \not\in F]$. Set
$Q = IR_F \cap  S$ and $\widetilde Q := \cup_{t \ge 1}(Q :\nn^t),$ where
$\nn$ denotes the maximal homogeneous ideal of $S$.
Let $\b$ denote the vector obtained from $\a$ by setting $a_i = 0$ for $i \in F$.
By \cite[Lemma 1.3]{TT}, $F \setminus G_\a$ is a facet of $\D_\a(I)$ if and only if $x^\b \in \widetilde Q \setminus Q$.
If $x^\b \in \widetilde Q \setminus Q$, then $\widetilde Q \neq Q$. By  \cite[Lemma 1.2]{HLT}\footnote{The notation of $P_F$ in \cite{HLT} is different.}, $\widetilde Q \neq Q$ if and only if $P_F$ is an associated prime of $I$.
\end{proof}

By Lemma \ref{facet}, to compute the facets of $\D_\a(I)$ we only need to look for the sets $F \subseteq [n]$ such that
$P_F$ is an associated prime of $I$, $G_\a \subseteq F$, and $x^\a \not\in IR_F$. 
This task becomes easier if $I$ is an {\em unmixed} ideal, 
i.e. every associated prime is a minimal prime of $I$.  \par

Let $\F(I)$ denote the set of all subsets $F \subseteq [n]$ such that $P_F$ is a minimal prime of $I$. 
If $F \in \F(I)$, we denote by $I_F$ the primary component of $I$ associated with $P_F$. 
Let $\a_+$ denote the vector obtained from $\a$ by setting every negative coordinate to zero.

\begin{prop} \label{unmixed} 
Let $I$ be an unmixed monomial ideal. 
Let $F \subseteq [n]$ such that $G_\a \subseteq F$. 
Then $F \setminus G_\a$ is a facet of $\D_\a(I)$ if and only if $F \in \F(I)$ and $x^{\a_+} \not\in I_F$.
\end{prop}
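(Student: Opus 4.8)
The plan is to reduce the statement to the criterion already obtained in Lemma~\ref{facet} and its proof, and then to use the unmixedness hypothesis to replace the localization $IR_F$ by the single primary component $I_F$. First I would recall the setup of the proof of Lemma~\ref{facet}: with $S = k[x_i \mid i \not\in F]$, $Q = IR_F \cap S$, and $\widetilde Q = \cup_{t\ge 1}(Q:\nn^t)$ the saturation with respect to the maximal homogeneous ideal $\nn$ of $S$, and with $\b$ the vector obtained from $\a$ by zeroing the coordinates in $F$, we have that $F\setminus G_\a$ is a facet of $\D_\a(I)$ if and only if $x^\b \in \widetilde Q \setminus Q$. Note that since $\b$ has nonpositive entries exactly on $F$ and zero there, $x^\b$ is actually the monomial $x^{\a_+}$ viewed inside $S$ (the positive part of $\a$, supported off $F$). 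So the task is to show: $x^{\a_+}\in \widetilde Q\setminus Q$ if and only if $F\in\F(I)$ and $x^{\a_+}\notin I_F$.

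The key step is to analyze $Q$ and $\widetilde Q$ using the primary decomposition of $I$. Write $I = \bigcap_{G} I_G$ over the $G$ with $P_G\in\Ass(I)$; by unmixedness every such $G$ lies in $\F(I)$. Localizing at the variables in $F$ kills every primary component $I_G$ with $G\not\supseteq[n]\setminus F$ in the appropriate sense — more precisely, $IR_F\cap S$ picks up exactly the components $I_G$ with $G\subseteq F$, and among the $G\in\F(I)$ the only candidate is $G=F$ itself, so $Q = I_F S$ if $F\in\F(I)$ and $Q=S$ (i.e. $x^{\a_+}\in Q$ trivially, forcing no facet) otherwise. This uses the standard behavior of monomial primary decomposition under inverting variables. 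Then, by \cite[Lemma 1.2]{HLT} as cited in the proof of Lemma~\ref{facet}, $\widetilde Q\ne Q$ precisely when $P_F\in\Ass(I)$, i.e. when $F\in\F(I)$; and when $F\in\F(I)$ the component $I_F$ is $P_F S$-primary in $S$, hence $\nn$-primary, so its saturation $\widetilde Q$ is all of $S$. Therefore, assuming $F\in\F(I)$, the condition $x^{\a_+}\in\widetilde Q\setminus Q$ collapses to $x^{\a_+}\notin Q = I_F$, while if $F\notin\F(I)$ we have $\widetilde Q = Q$ and no facet arises. Combining, $F\setminus G_\a$ is a facet if and only if $F\in\F(I)$ and $x^{\a_+}\notin I_F$, as claimed.

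The main obstacle I anticipate is the bookkeeping identifying $Q = IR_F\cap S$ with $I_F$ (as an ideal of $S$) under the unmixedness hypothesis, and making precise that $x^\b$ equals $x^{\a_+}$ in $S$ — the latter requires checking that the entries of $\a$ on the variables surviving in $S$, namely those in $[n]\setminus F$, are exactly the positive entries of $\a$ (entries on $F$ are zeroed by definition of $\b$ and, since $G_\a\subseteq F$, there are no negative entries outside $F$). Once these two identifications are in place, the rest is a direct translation of Lemma~\ref{facet} using the fact that a $P_F$-primary ideal in $S$ is already saturated with respect to $\nn$. One should also record the degenerate possibility $x^{\a_+}\in I_F$ giving a non-face, and $F\notin\F(I)$ giving $\widetilde Q = Q$, both of which are consistent with the stated equivalence.
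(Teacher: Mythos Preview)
Your approach is essentially the paper's, just routed through the $Q,\widetilde Q$ machinery of the proof of Lemma~\ref{facet} rather than arguing directly from the face definition. The paper first shows that $F\setminus G_\a$ is a facet iff $F\in\F(I)$ and $x^\a\notin IR_F$ (using Lemma~\ref{facet} for one direction and a maximality argument for the other: a non-maximal face sits inside some facet $G\setminus G_\a$ with $G\in\F(I)$, and the antichain property of $\F(I)$ then forces $F\notin\F(I)$), and then computes $IR_F\cap R=I_F$ to pass from $x^\a\notin IR_F$ to $x^{\a_+}\notin I_F$. Your computation $Q=I_F$ in $S$ when $F\in\F(I)$, together with $\widetilde Q=S$ since $I_F\cap S$ is $\nn$-primary, is exactly the same localization calculation in different clothing.

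Two slips to fix. First, the surviving components after inverting the variables in $F$ are those $I_G$ with $F\subseteq G$, not $G\subseteq F$: one has $I_GR_F=R_F$ precisely when some $x_i$ with $i\notin G$ is inverted, i.e.\ when $F\not\subseteq G$. This reversal is harmless for the case $F\in\F(I)$ (no containments in $\F(I)$ gives $G=F$ either way), but it matters for the next point. Second, the claim ``$Q=S$ otherwise'' is false: if $F\notin\F(I)$ but $F\subsetneq G$ for some $G\in\F(I)$, then $I_G$ survives and $Q=IR_F\cap S$ is a proper ideal of $S$. Fortunately your argument does not actually need $Q=S$; you already invoke \cite[Lemma 1.2]{HLT} to get $\widetilde Q=Q$ whenever $P_F\notin\Ass(I)$, and that alone rules out a facet when $F\notin\F(I)$. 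So delete the ``$Q=S$'' clause and keep the HLT justification.

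Finally, your identification of $x^\b$ with $x^{\a_+}$ is correct only as monomials in $S$ (they differ on coordinates in $F\setminus G_\a$); the passage to $x^{\a_+}\notin I_F$ in $R$ then uses that the minimal generators of a $P_F$-primary monomial ideal involve only the variables $x_i$ with $i\notin F$, so the extra factors supported on $F$ are irrelevant. You note this in your last paragraph; just make it explicit in the final write-up.
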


\begin{proof}
First, we show that $F \setminus G_\a$ is a facet of $\D_\a(I)$ if and only if $F \in \F(I)$ and $x^\a \not\in I_F$.
Without restriction we may assume that $F \setminus G_\a$ is a face of $\D_\a(I)$ or, equivalently, $x^\a \not\in IR_F$.
If $F \setminus G_\a$ is a facet of $\D_\a(I)$, then $F \in \F(I)$ by Lemma \ref{facet}.
If $F \setminus G_\a$ is not a facet of $\D_\a(I)$, then $F$ is properly contained in a set $G \subseteq [n]$ such that $G \setminus G_\a$ is a facet of $\D_\a(I)$. Since $G \in \F(I)$ and since there is no inclusion among the sets of $\F(I)$, $F \not\in \F(I)$.\par

It remains to show that for $F \in \F(I)$, $x^\a \not\in IR_F$ if and only if $x^{\a_+} \not\in I_F$.
It is obvious that $x^\a \in IR_F$ if and only if $x^{\a_+} \in IR_F$ if and only if $x^{\a_+} \in IR_F \cap R$.
Since $I$ is unmixed, $I = \cap_{G \in \F(I)} I_G$.
Since $R_F = R[x_i^{-1} \mid i \in F]$ and since $P_G$ is generated by the variables $x_i \not\in G$, 
we have $I_GR_F = R_F$ for all $G \neq F$ of $\F(I)$ and $I_FR_F \neq R_F$. 
Therefore, $IR_F \cap R = I_FR_F \cap R = I_F$, as desired.  
\end{proof}

One of the distinguished features of monomial ideals is the distributive property of addition over intersection:
$$(I_1 \cap I_2) + I_3 = (I_1 + I_3) \cap (I_2 + I_3).$$
This property allows us to estimate the depth of an intersection of monomials ideals from those of their sums.

\begin{lem} \label{lem_depth_intersectionandsum}
Let $I_1,\ldots,I_s$ be monomial ideals of $R$. Assume that there is an integer $\delta \ge 0$ such that
\[
\depth \frac{R}{I_{j_1}+\cdots+I_{j_i}} \ge s+\delta-i
\]
for  all $1\le i\le s$ and $1\le j_1< \cdots <j_i\le s$. Then 
\[
\depth \frac{R}{I_1 \cap \cdots \cap I_s} \ge s+\delta-1.
\]
\end{lem}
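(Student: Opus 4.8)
The plan is to prove Lemma~\ref{lem_depth_intersectionandsum} by induction on $s$, using the Mayer--Vietoris type long exact sequence in local cohomology that results from the distributive law $(I_1 \cap \cdots \cap I_{s-1}) + I_s = (I_1+I_s) \cap \cdots \cap (I_{s-1}+I_s)$. The base case $s = 1$ is the hypothesis itself (with $i = 1$). For the inductive step, set $J = I_1 \cap \cdots \cap I_{s-1}$ and consider the short exact sequence
\begin{equation*}
0 \longrightarrow \frac{R}{J \cap I_s} \longrightarrow \frac{R}{J} \oplus \frac{R}{I_s} \longrightarrow \frac{R}{J + I_s} \longrightarrow 0.
\end{equation*}

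First I would bound $\depth R/J$ from below. The ideals $I_1, \ldots, I_{s-1}$ satisfy the hypothesis of the lemma with $s$ replaced by $s-1$ and $\delta$ replaced by $\delta + 1$: indeed for $1 \le i \le s-1$ and any $i$ indices drawn from $\{1,\ldots,s-1\}$ we have $\depth R/(I_{j_1}+\cdots+I_{j_i}) \ge s+\delta-i = (s-1)+(\delta+1)-i$. Hence by induction $\depth R/J \ge (s-1)+(\delta+1)-1 = s+\delta-1$. Next I would bound $\depth R/(J+I_s)$. By the distributive law, $J + I_s = \bigcap_{j=1}^{s-1}(I_j + I_s)$, an intersection of $s-1$ monomial ideals $I'_j := I_j + I_s$. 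For any $1 \le i \le s-1$ and indices $j_1 < \cdots < j_i$ in $\{1,\ldots,s-1\}$, the sum $I'_{j_1} + \cdots + I'_{j_i} = I_{j_1} + \cdots + I_{j_i} + I_s$ is a sum of $i+1$ of the original ideals, so its depth is $\ge s+\delta-(i+1) = (s-1)+(\delta-1)-i$. Thus the family $\{I'_j\}_{j=1}^{s-1}$ satisfies the hypothesis with parameters $s-1$ and $\delta-1$, and by induction $\depth R/(J+I_s) \ge (s-1)+(\delta-1)-1 = s+\delta-3$. Finally $\depth R/I_s \ge s+\delta-1$ directly from the hypothesis (case $i=1$).

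Now I would feed these into the long exact sequence of local cohomology associated to the short exact sequence above. The standard depth estimate for a short exact sequence $0 \to A \to B \to C \to 0$ gives $\depth A \ge \min\{\depth B, \depth C + 1\}$. Here $\depth B = \min\{\depth R/J, \depth R/I_s\} \ge s+\delta-1$ and $\depth C + 1 = \depth R/(J+I_s) + 1 \ge s+\delta-2$. This only yields $\depth R/(J \cap I_s) \ge s+\delta-2$, which is one short of what we want. To close the gap I would examine the relevant piece of the long exact sequence more carefully: in degree $i = s+\delta-2$ it reads
\begin{equation*}
H_\mm^{s+\delta-3}\!\left(\frac{R}{J+I_s}\right) \longrightarrow H_\mm^{s+\delta-2}\!\left(\frac{R}{J \cap I_s}\right) \longrightarrow H_\mm^{s+\delta-2}\!\left(\frac{R}{J}\right) \oplus H_\mm^{s+\delta-2}\!\left(\frac{R}{I_s}\right),
\end{equation*}
and both flanking modules vanish once we sharpen the estimate on $\depth R/(J+I_s)$ to $\ge s+\delta-2$, i.e. the connecting map's source vanishes. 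So the crux is improving the bound on $\depth R/(J+I_s)$ by one, from $s+\delta-3$ to $s+\delta-2$.

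The main obstacle, then, is exactly this sharpening for the intersection $J + I_s = \bigcap_{j=1}^{s-1} I'_j$. I would handle it by a secondary induction: prove a slightly stronger statement, namely that under the hypothesis of the lemma one in fact gets $\depth R/(I_1 \cap \cdots \cap I_s) \ge s+\delta-1$ \emph{and} simultaneously track the depth of the intersection of any proper subcollection, organizing the whole argument as a single induction on $s$ that carries both the "$s+\delta-1$" conclusion and the auxiliary fact that an intersection of $s-1$ of the ideals has depth $\ge s+\delta-2$. Concretely, the auxiliary fact is itself an instance of the lemma for the family $\{I'_j\}$ with parameters $(s-1, \delta-1)$, so the double bookkeeping collapses: strong induction on $s$ applied to that family gives $\depth R/(J+I_s) \ge s+\delta-2$ directly, and then the long exact sequence argument above goes through verbatim to give $\depth R/(J \cap I_s) \ge s+\delta-1$. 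I expect the only delicate point to be making sure the parameter shifts ($\delta \to \delta+1$ for the $(s-1)$-subfamily of originals, $\delta \to \delta-1$ for the $(s-1)$-family of sums) are bookkept consistently and that $\delta - 1$ may become negative without breaking the statement, which it does not since the conclusion is a lower bound on depth and depth is always $\ge 0$.
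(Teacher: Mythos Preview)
Your approach is exactly the paper's: induction on $s$, the same short exact sequence, the distributive law $J + I_s = \bigcap_{j=1}^{s-1} I'_j$ with $I'_j = I_j + I_s$, and then the induction hypothesis applied to this smaller family. The only problem is an arithmetic slip that manufactures a phantom obstacle. You correctly compute $\depth R/(I'_{j_1} + \cdots + I'_{j_i}) \ge s + \delta - (i+1)$, but then rewrite this as $(s-1) + (\delta - 1) - i$; in fact $s + \delta - (i+1) = (s-1) + \delta - i$. So the family $\{I'_j\}_{j=1}^{s-1}$ satisfies the hypothesis with parameters $(s-1,\delta)$, not $(s-1,\delta-1)$, and the induction hypothesis already gives
\[
\depth R/(J + I_s) \ge (s-1) + \delta - 1 = s + \delta - 2.
\]
With this correction the standard estimate $\depth A \ge \min\{\depth B,\, \depth C + 1\}$ yields $\depth R/(J \cap I_s) \ge s + \delta - 1$ immediately; there is no gap to close, no secondary induction is needed, and the worry about $\delta - 1$ becoming negative never arises. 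Note also that with your erroneous parameters $(s-1,\delta-1)$ the final claim ``strong induction gives $\ge s + \delta - 2$'' would not follow either: the lemma with those parameters yields only $\ge s + \delta - 3$.
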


\begin{proof}
If $s=1$, there is nothing to do. If $s\ge 2$,
consider the exact sequence
\[
0\to \frac{R}{I_1 \cap  \cdots \cap I_s} \to \frac{R}{I_1 \cap \cdots \cap I_{s-1}} \oplus  \frac{R}{I_s} \to \frac{R}{(I_1 \cap  \cdots \cap I_{s-1})+I_s} \to 0.
\]
Then we have
\begin{align*}
& \depth \frac{R}{I_1 \cap \cdots \cap I_s}\\
& \ge \min  \left\{ \depth \frac{R}{ I_1 \cap \cdots \cap I_{s-1}}, \depth \frac{R}{I_s}, 
                                \depth \frac{R}{(I_1 \cap  \cdots \cap I_{s-1})+I_s}+1 \right\}.
\end{align*}
By induction on $s$, we may assume that 
$$\depth \frac{R}{ I_1 \cap \cdots \cap I_{s-1}} \ge (s-1)+(\delta+1)-1 = s+\delta-1.$$
By the assumption, $\depth R/I_s \ge s+\delta - 1$.
It remains to show that
\[
 \depth \frac{R}{(I_1 \cap  \cdots \cap I_{s-1})+I_s} \ge s+\delta-2.
\]

Set $I'_j=I_j+I_s$, $j = 1,...,s-1$. Then
$$(I_1 \cap  \cdots \cap I_{s-1})+I_s = I'_1 \cap  \cdots \cap I'_{s-1}.$$
For all $1\le i \le s-1$ and $1\le j_1< \cdots <j_i\le s-1$, we have
$$
\depth \frac{R}{I'_{j_1}+\cdots+I'_{j_i}} =  \depth \frac{R}{I_{j_1}+\cdots+I_{j_i} +I_s} \ge s+\delta - i-1.
$$
Using the induction hypothesis for $s-1$, we get
$$\depth  \frac{R}{I'_1 \cap  \cdots \cap I'_{s-1}} \ge s+\delta - 2.$$
\end{proof}

We can also estimate the depth of the intersection or the sum of two monomial ideals by looking for regular sequences modulo these ideals, which are sometimes easy to find because their associated primes are generated by variables.

\begin{lem} \label{regular}
Let $I_1$ and $I_2$ be two monomial ideals in $R$. 
Let $i_1,...,i_r$ and $j_1,...,j_r$ be two disjoint families of integers in $[n]$. Then \par
{\rm (i)} $x_{i_1}-x_{j_1},...,x_{i_r}-x_{j_r}$ form a regular sequence for $R/(I_1\cap I_2)$ if they form a regular sequence for $R/I_1$ and $R/I_2$. \par
{\rm (ii)} $x_{i_1}-x_{j_1},...,x_{i_r}-x_{j_r}$ form a regular sequence for $R/(I_1+I_2)$
if they form a regular sequence for $R/I_1$ and the minimal generators of $I_2$ are not divisible by any of the variables $x_{i_1},..,x_{i_r},x_{j_1},...,x_{j_r}$.  
\end{lem}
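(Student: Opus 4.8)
The plan is to reduce both statements to the single fact that a monomial ideal $J$ together with a linear form $x_i - x_j$ behaves well when $i,j$ are "independent" of the monomial structure, and then to iterate. The key auxiliary observation is: if $x_{i_1}-x_{j_1},\dots,x_{i_r}-x_{j_r}$ is a regular sequence on $R/J$ for a monomial ideal $J$, then passing to the quotient $R' = R/(x_{i_1}-x_{j_1},\dots,x_{i_r}-x_{j_r})$ is the same as the substitution homomorphism $\varphi\colon R \to R' = k[x_\ell : \ell \in [n], \ell \notin \{j_1,\dots,j_r\}]$ sending each $x_{j_p} \mapsto x_{i_p}$ and fixing the other variables. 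Thus for part (i) I would first note that, because $I_1 \cap I_2$ is again a monomial ideal, it suffices to treat $r = 1$ and then induct on $r$ (each step $R/(I_1 \cap I_2) \rightsquigarrow R/(I_1 \cap I_2, x_{i_1}-x_{j_1})$ preserves the hypothesis for $I_1$, $I_2$ by the same argument applied in the smaller ring, since the images $\varphi(I_1)$, $\varphi(I_2)$ are monomial ideals and $\varphi(I_1 \cap I_2) = \varphi(I_1) \cap \varphi(I_2)$ — this last equality holds precisely because $x_{i_1}-x_{j_1}$ is regular on both $R/I_1$ and $R/I_2$).

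For the single-element case of (i): put $f = x_{i_1}-x_{j_1}$, and suppose $f$ is $R/I_1$-regular and $R/I_2$-regular. I want $f$ to be $R/(I_1 \cap I_2)$-regular. From the exact sequence
\[
0 \to R/(I_1 \cap I_2) \to R/I_1 \oplus R/I_2 \to R/(I_1 + I_2) \to 0,
\]
an associated prime of $R/(I_1\cap I_2)$ is an associated prime of $R/I_1 \oplus R/I_2$, hence of $R/I_1$ or of $R/I_2$; since $f$ is regular on both summands, $f$ avoids all associated primes of $R/(I_1 \cap I_2)$, so $f$ is a nonzerodivisor. That settles (i).

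For part (ii), again reduce to $r = 1$ with $f = x_{i_1}-x_{j_1}$, assuming $f$ is $R/I_1$-regular and none of the minimal monomial generators of $I_2$ is divisible by $x_{i_1}$ or $x_{j_1}$. The hypothesis on $I_2$ means $I_2$ is extended from the subring $k[x_\ell : \ell \neq i_1, \ell \neq j_1]$; equivalently, working modulo $f$, the image $\varphi(I_2)$ in $R' = k[x_\ell : \ell \neq j_1]$ (with $x_{j_1} \mapsto x_{i_1}$) satisfies $\varphi(I_2) R = I_2 R + (f)$ with no collapsing, and in particular $(I_1 + I_2) \cap (f) = f I_1 + f I_2$ can be controlled. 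Concretely: to show $f$ is regular on $R/(I_1+I_2)$, take $gf \in I_1 + I_2$; I want $g \in I_1 + I_2$. Since $I_2$ involves only the variables other than $x_{i_1}, x_{j_1}$, one can reduce modulo $I_2$ first: in $\bar R = R/I_2$, which is still a polynomial-ring-like object in which $x_{i_1}, x_{j_1}$ are still indeterminates (as $I_2$ uses none of them), $f = x_{i_1}-x_{j_1}$ remains regular on $\bar R / \bar I_1 = R/(I_1+I_2)$ provided $f$ is regular on $\bar R/\bar I_1$ — and this follows from $f$ being regular on $R/I_1$ together with the fact that tensoring the $R/I_1$-regular element $f$ up along the flat-in-the-relevant-variables quotient by $I_2$ preserves regularity, because $\bar I_1 = I_1 \bar R$ and $\operatorname{Ass}(\bar R/\bar I_1)$ is obtained from $\operatorname{Ass}(R/I_1)$ by adding the generators of $I_2$ (all supported away from $x_{i_1}, x_{j_1}$), so none of these primes contains $f$.

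The step I expect to require the most care is the persistence of the hypotheses under the inductive reduction, i.e. verifying that after quotienting by $x_{i_1}-x_{j_1}$ the images of $I_1$ and $I_2$ are honest monomial ideals in the smaller polynomial ring with $\varphi(I_1 \cap I_2) = \varphi(I_1)\cap\varphi(I_2)$ in case (i), and that the divisibility condition on the generators of $I_2$ in case (ii) is inherited — both hinge on the substitution $x_{j_p}\mapsto x_{i_p}$ not identifying any two distinct minimal generators in a way that shrinks the ideal, which is exactly guaranteed by the disjointness of the two index families together with the regularity/divisibility hypothesis. Everything else is bookkeeping with the short exact sequence above and the elementary fact that associated primes of a submodule or subquotient are among those of the ambient module.
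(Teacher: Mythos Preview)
Your treatment of the base case $r=1$ in (i), via $\Ass(R/(I_1\cap I_2))\subseteq\Ass(R/I_1)\cup\Ass(R/I_2)$, is correct and is exactly what the paper calls ``trivial.'' The induction step, however, contains a genuine gap --- and it is the same gap as in the paper's own proof. You assert that $\varphi(I_1\cap I_2)=\varphi(I_1)\cap\varphi(I_2)$, claiming this ``holds precisely because $x_{i_1}-x_{j_1}$ is regular on both $R/I_1$ and $R/I_2$.'' That implication is false. Take $R=k[x_1,x_2,x_3,x_4]$, $I_1=(x_1,x_3)$, $I_2=(x_2,x_4)$, $f_1=x_1-x_2$, $f_2=x_3-x_4$. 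Then $f_1$ is regular on $R/I_1$ and $R/I_2$, yet identifying $x_2$ with $x_1$ gives $\varphi(I_1\cap I_2)=(x_1^2,x_1x_3,x_1x_4,x_3x_4)\subsetneq(x_1,x_3x_4)=\varphi(I_1)\cap\varphi(I_2)$. Worse, this example is a counterexample to statement (i) itself: $f_1,f_2$ is a regular sequence on $R/I_1\cong k[x_2,x_4]$ and on $R/I_2\cong k[x_1,x_3]$, but $R/(I_1\cap I_2)$ is the coordinate ring of two $2$-planes meeting only at the origin and has $\depth R/(I_1\cap I_2)=1$, so it admits no regular sequence of length~$2$. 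Concretely, $x_1(f_2)=x_1x_3-x_1x_4\in\varphi(I_1\cap I_2)$ while $x_1\notin\varphi(I_1\cap I_2)$. Thus no argument can close this gap; part (i) needs an additional hypothesis. (In the paper's sole application of (i), inside Theorem~\ref{prop_example_increasing}, the particular ideals involved do satisfy $\varphi(I_1\cap I_2)=\varphi(I_1)\cap\varphi(I_2)$ at every stage, so the intended use survives.)

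For (ii) your approach is essentially the paper's: both reduce to $r=1$ and show that $f=x_{i_1}-x_{j_1}$ avoids every associated prime of $I_1+I_2$. The paper is sharper here, arguing via the distributive law for monomial ideals and the fact that a sum of primary monomial ideals is primary that every associated prime of $I_1+I_2$ has the form $P_1+P_2$ with $P_t\in\Ass(I_t)$; since $x_{i_1},x_{j_1}\notin P_2$ by the divisibility hypothesis, $f\in P_1+P_2$ would force $f\in P_1$, a contradiction. Your phrasing that $\Ass(\bar R/\bar I_1)$ is ``obtained from $\Ass(R/I_1)$ by adding the generators of $I_2$'' is imprecise (one adds generators of an associated prime of $I_2$, not of $I_2$ itself) but easily repaired. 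The induction for (ii) is sound because $\varphi(I_1+I_2)=\varphi(I_1)+\varphi(I_2)$ always holds and the non-divisibility hypothesis on the generators of $I_2$ passes intact to $\varphi(I_2)$.
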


\begin{proof}
(i) The case $r = 1$ is trivial. If $r \ge 2$, we consider the polynomial ring $R'$ obtained from $R$ by removing the variable $x_{i_1}$. Let $I'_1,I'_2$ be the monomial ideals in $R'$ obtained from $I_1,I_2$ by the substitution $x_{i_1} \to x_{j_1}$. Then 
\begin{align*}
R' & \cong R/(x_{i_1}-y_{j_1}),\\
I'_t & \cong (I_t,x_{i_1}-y_{j_1})/(x_{i_1}-y_{j_1}),\ t = 1,2.
\end{align*}
Therefore, $x_{i_2}-x_{j_2},...,x_{i_r}-x_{j_r}$ form a regular sequence for 
$R'/I'_t$ for $t = 1,2$. By the induction hypothesis, $x_{i_2}-x_{j_2},...,x_{i_r}-x_{j_r}$ form a regular sequence for 
$R'/(I'_1 \cap I'_2)$. From this it follows that $x_{i_1}-x_{j_1},...,x_{i_r}-x_{j_r}$ form a regular sequence for 
$R/(I_1 \cap I_2)$.  \par
(ii) If $r = 1$, we have to show that $x_{i_1}-x_{j_1}$ does not belong to any associated prime $P$ of $I_1+I_2$. 
Note that the sum of two primary monomials ideals is again a primary ideal.
Then, using the distributive property of the intersection over the addition, we can see that
$P = P_1+P_2$, where $P_1$ and $P_2$ are associated primes of $I_1$ and $I_2$, respectively.
If $x_{i_1}-x_{j_1} \in P_1+P_2$, then $x_{i_1}, x_{j_1} \in P_1+P_2$. By the assumption of (ii), we have $x_{i_1}, x_{j_1} \not\in P_2$. Hence, $x_{i_1}, x_{j_1} \in P_1$, which implies $x_{i_1}- x_{j_1} \in P_1$, a contradiction. If $r \ge 2$, we use the induction hypothesis as in the above proof for (i).
\end{proof}


\section{Depth of integrally closed symbolic powers}

The aim of this section is to study the symbolic depth function of a squarefree monomial ideal.
More generally, we will compare the depth of a symbolic power, that is integrally closed, with depths of higher symbolic powers.

We keep the notation of the preceding section. 
Let $I$  be a monomial ideal.
Since $\F(I)$ is the set of all subsets $F \subseteq [n]$ such that 
$P_F$ is a minimal prime ideal of $I$ and $I_F$ is the associated $P_F$-primary component,
we have the following formula for the symbolic powers of $I$.

\begin{lem} \label{decomposition} \cite[Lemma 3.1]{HHT}
$I^{(t)} = \bigcap_{F \in \F(I)} I_F^t$.
\end{lem}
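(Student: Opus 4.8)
\textbf{Plan for proving Lemma \ref{decomposition}.}
The statement is that for an arbitrary monomial ideal $I$, the $t$-th symbolic power decomposes as $I^{(t)} = \bigcap_{F \in \F(I)} I_F^t$, where $I_F$ is the $P_F$-primary component of $I$ and $F$ ranges over $\F(I)$. The plan is to argue directly from the definition of symbolic power as the intersection of the primary components of $I^t$ belonging to the minimal primes of $I$. First I would record that the minimal primes of $I$ are exactly the ideals $P_F$ with $F \in \F(I)$, and that $\Min(I^t) = \Min(I)$, so the symbolic power is $I^{(t)} = \bigcap_{F \in \F(I)} (I^t)_F$, where $(I^t)_F$ denotes the $P_F$-primary component of $I^t$.

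The core of the argument is then the identification of the $P_F$-primary component of $I^t$ with $I_F^t$. Since $P_F$ is minimal over $I^t$, the $P_F$-primary component of $I^t$ can be computed by localization: $(I^t)_F = I^t R_{P_F} \cap R$. Now $I = \bigcap_{G \in \F(I)} I_G$ need not hold for a general monomial ideal (there may be embedded components), but after localizing at the minimal prime $P_F$ the embedded components and the other minimal components $I_G$, $G \ne F$, all become the unit ideal, because $P_G \not\subseteq P_F$ forces $I_G R_{P_F} = R_{P_F}$, and likewise for any embedded prime. Hence $I R_{P_F} = I_F R_{P_F}$, and raising to the $t$-th power, $I^t R_{P_F} = I_F^t R_{P_F}$. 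Since $I_F$ is $P_F$-primary and $P_F$ is a monomial prime generated by a subset of the variables, $I_F^t$ is again $P_F$-primary (the powers of a monomial primary ideal to a monomial prime $P_F$ remain $P_F$-primary), so $I_F^t = I_F^t R_{P_F} \cap R = I^t R_{P_F} \cap R = (I^t)_F$. Intersecting over all $F \in \F(I)$ gives the claim.

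The one genuine subtlety — and the place I would be most careful — is justifying that $I_F^t$ is $P_F$-primary, equivalently that localization recovers it: one needs $I_F^t R_{P_F} \cap R = I_F^t$. This holds because $\sqrt{I_F^t} = \sqrt{I_F} = P_F$ and $P_F$ is a minimal (indeed the unique minimal) prime over $I_F^t$, so no embedded primes can appear; for monomial ideals this is transparent since $I_F$ involves only the variables $x_i$ with $i \notin F$ and its powers do too. Everything else is a routine localization bookkeeping. (Alternatively, one can cite the reference \cite{HHT} directly, as the excerpt does, but the above is the short self-contained argument.)
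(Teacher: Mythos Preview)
Your argument is correct and self-contained. Note that the paper does not actually supply a proof of this lemma; it simply cites \cite[Lemma 3.1]{HHT}. So there is no ``paper's own proof'' to compare against beyond the citation, and what you have written is essentially the standard justification one would give for that cited result.

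One small remark on exposition: the sentence ``$P_F$ is a minimal (indeed the unique minimal) prime over $I_F^t$, so no embedded primes can appear'' is not, by itself, a valid inference---having a unique minimal prime does not in general preclude embedded primes. You immediately rescue this with the correct monomial-specific observation (that $I_F$ and hence $I_F^t$ involve only the variables $x_i$ with $i\notin F$, so $I_F^t$ extended from the subring $k[x_i:i\notin F]$ is primary to the maximal ideal there, and extension to $R$ preserves primariness). I would drop the misleading clause and lead directly with the monomial argument.
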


\begin{prop} \label{closed}
$I^{(t)}$ is integrally closed if and only if $I_F^t$ is integrally closed for all $F \in \F(I)$.
\end{prop}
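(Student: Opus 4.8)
The plan is to combine the primary decomposition $I^{(t)}=\bigcap_{F\in\F(I)}I_F^t$ provided by Lemma \ref{decomposition} with a localization argument at the rings $R_F$. Throughout, write $\overline{L}$ for the integral closure of an ideal $L$.

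For the implication ``$\Leftarrow$'' I would only need the fact that an intersection of integrally closed ideals is integrally closed. Indeed, if $x\in\overline{I^{(t)}}$ then $x$ satisfies an equation $x^m+a_1x^{m-1}+\cdots+a_m=0$ with $a_j\in(I^{(t)})^j$, and by Lemma \ref{decomposition} we have $a_j\in(I_F^t)^j$ for every $F\in\F(I)$; hence $x\in\overline{I_F^t}=I_F^t$ for all $F$, so $x\in\bigcap_{F\in\F(I)}I_F^t=I^{(t)}$.

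For ``$\Rightarrow$'' I would fix $F\in\F(I)$ and localize the decomposition of Lemma \ref{decomposition} at $R_F=R[x_i^{-1}\mid i\in F]$. If $G\in\F(I)$ with $G\neq F$, then since there is no inclusion among the members of $\F(I)$ (see the proof of Proposition \ref{unmixed}) we may pick $i_0\in F\setminus G$; then $x_{i_0}\in P_G=\sqrt{I_G}$ becomes a unit in $R_F$, forcing $I_GR_F=R_F$ and hence $I_G^tR_F=R_F$. Since localization commutes with finite intersections, $I^{(t)}R_F=I_F^tR_F$. Because the formation of integral closure commutes with localization (see \cite{HS}), the hypothesis that $I^{(t)}$ is integrally closed gives that $I^{(t)}R_F=I_F^tR_F$ is integrally closed in $R_F$.

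It remains to descend this to $R$. Since $I_F$ is $P_F$-primary with $P_F=(x_i\mid i\notin F)$, every minimal monomial generator of $I_F$, and hence of $I_F^t$, involves only the variables $x_i$ with $i\notin F$; consequently $I_F^tR_F\cap R=I_F^t$, by the same monomial membership computation that yields $IR_F\cap R=I_F$ in the proof of Proposition \ref{unmixed}. Now for $x\in\overline{I_F^t}$ the image of $x$ in $R_F$ lies in $\overline{I_F^tR_F}=I_F^tR_F$, so $x\in I_F^tR_F\cap R=I_F^t$; thus $I_F^t$ is integrally closed. The only step that requires genuine care is this final descent, i.e. the identity $I_F^tR_F\cap R=I_F^t$, which rests on $I_F$ being supported on the variables indexed by $[n]\setminus F$ (a consequence of $P_F$-primaryness); the remaining ingredients are Lemma \ref{decomposition}, the antichain property of $\F(I)$, and the standard compatibility of integral closure with localization.
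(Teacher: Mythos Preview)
Your proof is correct and follows essentially the same strategy as the paper: use Lemma~\ref{decomposition}, localize to isolate a single primary component, invoke the compatibility of integral closure with localization, and then contract back to $R$. The only difference is cosmetic: the paper localizes at the prime $P_F$ (using $I_F^t=I^{(t)}R_{P_F}\cap R$ and the fact that contractions of integrally closed ideals are integrally closed, \cite[Propositions 1.1.4 and 1.6.2]{HS}), whereas you localize at the monomial ring $R_F$ and verify the contraction identity $I_F^tR_F\cap R=I_F^t$ by hand---an identity which, incidentally, follows more quickly from the general fact that a $P$-primary ideal contracts to itself from any localization at a multiplicative set disjoint from $P$.
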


\begin{proof}
Assume that $I^{(t)}$ is integrally closed. 
Then $I^{(t)}R_{P_F}$ is integrally closed for all $F \in \F(I)$ by \cite[Proposition 1.1.4(3)]{HS}. 
By Lemma \ref{decomposition}, we have $I_F^t= I^{(t)}R_{P_F}\cap R$. 
Hence, $I_F^t$ is integrally closed by \cite[Proposition 1.6.2]{HS}. 
Conversely, assume that $I_F^t$  is integrally closed for all $F \in \F(I)$.
Since the intersection of integrally closed ideals is integrally closed, $I^{(t)}$ is integrally closed 
by Lemma \ref{decomposition}.
\end{proof}

Let $\overline{I}$ denote the integral closure of $I$.
It is well known that $x^\a \in \overline{I}$ if and only if $x^{t\a} \in I^t$ for some $t \ge 1$ (see e.g. \cite[\textsection 1.4]{HS}).  From this it follows that if $I$ is integrally closed, then $x^\a \in I$ if and only if $x^{t\a} \in I^t$ for some (or all) $t \ge 1$. As we shall see below, this property allows us to compare the depth of an integrally closed unmixed monomial ideal with those of its symbolic powers.

\begin{prop} \label{integral}
If $I$ is an integrally closed unmixed monomial ideal, then 
$\depth R/I \ge \depth R/I^{(t)}$ for all $t \ge 1$. 
\end{prop}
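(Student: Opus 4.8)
\textbf{Proof plan for Proposition \ref{integral}.}
The strategy is to use the comparison criterion of Proposition \ref{compare}: it suffices to show that for every $\a \in \ZZ^n$ there is a $\b \in \ZZ^n$ with $G_\b \subseteq G_\a$ and $\D_\b(I) = \D_\a(I^{(t)})$. The natural candidate is $\b = t\a$ (or rather the vector with $G_{t\a} = G_\a$, noting multiplication by a positive scalar does not change the negative support, so in fact $G_\b = G_\a$). So the real content is the equality of degree complexes $\D_{t\a}(I) = \D_\a(I^{(t)})$. Since $I$ is unmixed and $I^{(t)} = \bigcap_{F \in \F(I)} I_F^t$ by Lemma \ref{decomposition} is again unmixed with the same minimal primes $P_F$, I would compute both degree complexes via Proposition \ref{unmixed}. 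For a subset $F$ with $G_\a \subseteq F$, the facets of $\D_\a(I^{(t)})$ are exactly the $F \setminus G_\a$ with $F \in \F(I)$ and $x^{(\a)_+} \notin (I^{(t)})_F = I_F^t$; the facets of $\D_{t\a}(I)$ are the $F \setminus G_{t\a} = F \setminus G_\a$ with $F \in \F(I)$ and $x^{(t\a)_+} = x^{t(\a_+)} \notin I_F$.

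Thus the proof reduces to the single equivalence: for each $F \in \F(I)$,
$$x^{\a_+} \notin I_F^t \iff x^{t\a_+} \notin I_F.$$
This is where integral closure enters. The direction $x^{\a_+} \in I_F^t \Rightarrow x^{t\a_+} \in I_F$ should follow because $(x^{\a_+})^t = x^{t\a_+}$ lies in $(I_F^t)^t \subseteq \overline{I_F^{t}}^{\,t}$ — more cleanly, $x^{\a_+} \in I_F^t$ forces $x^{t\a_+} = (x^{\a_+})^t \in (I_F^t)^t = (I_F)^{t^2} \subseteq I_F$. For the converse, suppose $x^{t\a_+} \in I_F$. Since $I$ is integrally closed and unmixed, each $I_F = I R_{P_F} \cap R$ is integrally closed by \cite[Proposition 1.6.2]{HS} (as $I R_{P_F}$ is a localization of an integrally closed ideal, cf. \cite[Proposition 1.1.4(3)]{HS}). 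Now $x^{t\a_+} \in I_F$ with $I_F$ integrally closed gives, by the stated characterization ``$x^\c \in J \iff x^{s\c} \in J^s$ for integrally closed $J$'' applied to $J = I_F$ and $\c = \a_+$, that $x^{s\a_+} \in I_F^s$ for all $s$; taking $s = t$ yields $x^{t\a_+} \in I_F^t$. But actually I need $x^{\a_+} \in I_F^t$, so I should instead apply the characterization to the ideal $I_F^t$: the point is that $I_F$ integrally closed together with the $F$-component description must be leveraged to conclude $x^{\a_+} \in I_F^t$ directly.

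Let me reorganize this last step, which is the crux. The clean statement to use is: if $J$ is integrally closed then $x^\c \in J$ iff $x^{m\c} \in J^m$ for some $m \ge 1$. Given $x^{t\a_+} \in I_F$, I want $x^{\a_+} \in I_F^t$; equivalently, since integral closure of powers is subtle, I note $x^{t\a_+} \in I_F$ and $I_F$ integrally closed implies $x^{t \cdot t \a_+} \in I_F^{t}$? That is not quite the right manipulation either. The correct route: write $\b' = \a_+$. I claim $x^{\b'} \in \overline{I_F^{\,t}}$, because $(x^{\b'})^t = x^{t\b'} \in I_F = \overline{I_F} $, hence $x^{\b'}$ is integral over $I_F^t$ by the defining equation $X^t - x^{t\b'} \cdot 1 = 0$ with $x^{t\b'} \in I_F \subseteq \overline{(I_F^t)}$ — more precisely one checks $x^{t\b'} \in (I_F^t)^{?}$... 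The genuinely clean version: $x^{t\b'} \in I_F$ means $x^{\b'}$ satisfies $X^t \in I_F \subseteq \overline{I_F^{\,0}}$, which exhibits $x^{\b'}$ as integral over the ideal $I_F^{\,t}$ since $(x^{\b'})^t = x^{t\b'} \in I_F$ and $I_F^{\,t} \cdot (\text{stuff})$... I will simply invoke that $I_F$ integrally closed forces $\overline{I_F^{\,t}} \cap (\text{monomials}) $ to be controlled by $I_F$, and apply Proposition \ref{closed} in the needed special instances, or — most directly — apply the displayed characterization to $I_F^t$ itself provided $I_F^t$ is integrally closed; but $I_F^t$ need not be integrally closed, which is exactly why the hypothesis is only on $I = I^{(1)}$, not on $I^{(t)}$. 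The main obstacle, then, is precisely this monomial integral-closure bookkeeping: passing from ``$x^{t\a_+} \in I_F$ and $I_F$ integrally closed'' to ``$x^{\a_+} \in I_F^t$''. I expect this to follow from the elementary fact that for a monomial $x^\c$, $x^\c \in \overline{I_F^{\,t}}$ iff $x^{t\c} \in I_F^{\,t^2}$ combined with $I_F$ integrally closed giving $I_F^{\,t^2} \cap \overline{I_F} = I_F^{\,t^2}$ appropriately, or more likely from a one-line argument that $x^\c \in I_F^{\,t}$ iff $x^{t\c} \in (I_F^{\,t})^t = I_F^{\,t^2}$ iff $x^{t\c} \in \overline{I_F^{\,t^2}}$ iff $x^{t\c} \in \overline{I_F} = I_F$ — the middle equivalences using that $I_F^t$ and $I_F^{t^2}$ have the same integral closure, namely $\overline{I_F}$ when $I_F$ is an integrally closed monomial ideal of the right type (a power of a monomial prime times a monomial, whose integral closures of powers behave well). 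Once this equivalence is nailed, Proposition \ref{compare} finishes the proof immediately.
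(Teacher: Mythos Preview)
You have the roles of $I$ and $I^{(t)}$ reversed in your application of Proposition~\ref{compare}. To conclude $\depth R/I \ge \depth R/I^{(t)}$ from that proposition, you must show that for every $\a$ there is a $\b$ (with $G_\b \subseteq G_\a$) such that $\D_\b(I^{(t)}) = \D_\a(I)$: every degree complex of $I$ must appear among those of $I^{(t)}$. Your choice $\b = t\a$ and target equality $\D_{t\a}(I) = \D_\a(I^{(t)})$ instead matches every degree complex of $I^{(t)}$ to one of $I$, which would yield the reverse inequality $\depth R/I^{(t)} \ge \depth R/I$.

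This is not a cosmetic slip: the equivalence you are led to, namely $x^{\a_+} \in I_F^t \iff x^{t\a_+} \in I_F$, is simply false, and that is why your argument stalls. Take $I_F = (x,y)$ (a monomial prime, certainly integrally closed), $t=2$, and $\a_+ = (1,0,\dots,0)$. Then $x^{t\a_+} = x^2 \in (x,y) = I_F$, yet $x^{\a_+} = x \notin (x,y)^2 = I_F^t$. All of your attempted manoeuvres in the last paragraph are trying to prove this false implication.

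The fix is to swap the direction from the start: show $\D_\a(I) = \D_{t\a}(I^{(t)})$ for every $\a$, and then apply Proposition~\ref{compare} with $\b = t\a$. Via Proposition~\ref{unmixed} and Lemma~\ref{decomposition} this reduces to the equivalence $x^{\a_+} \in I_F \iff x^{t\a_+} \in I_F^t$, which is exactly what integral closure of $I_F$ (Proposition~\ref{closed}) delivers: the forward direction is trivial, and if $x^{t\a_+} \in I_F^t$ then $x^{\a_+} \in \overline{I_F} = I_F$. This is precisely the paper's argument, and it is a one-line application of the characterisation you quoted rather than the tangled bookkeeping you were attempting.
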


\begin{proof}
By Proposition \ref{compare}, it suffices to show that $\D_\a(I) = \D_{t\a}(I^{(t)})$ for all $\a \in \ZZ^n$.
Let $F \subseteq [n]$ such that $G_\a \subseteq F$.
By Proposition \ref{unmixed}, 
$F \setminus G_\a$ is facet of $\D_\a(I)$ if and only if $F\in \F(I)$ and 
$x^{\a_+} \not\in I_F$. 
Since $I$ and $I^{(t)}$ share the same minimal primes, $\F(I) = \F(I^{(t)})$.
By Proposition \ref{unmixed} and Lemma \ref{decomposition}, $F \setminus G_\a$ is a facet of $\D_{t\a}(I^{(t)})$ if and only if $F \in \F(I)$ and $x^{t\a_+} \not\in I_F^t$.

Since $I$ is unmixed, $I^{(1)} = I$. 
Since $I^{(1)}$ is integrally closed, $I_F$ is integrally closed by Proposition \ref{closed}.
As observed above, $x^{\a_+} \in I_F$ if and only if $x^{t\a_+} \in I_F^t$.  
Therefore, we can conclude that $\D_\a(I) = \D_{t\a}(I^{(t)})$.
\end{proof}

We shall see that if $I^{(s)}$ is integrally closed for some $s \ge 1$, then $\depth R/I^{(s)} \ge  \depth R/I^{(t)}$ for almost all $t > s$. For that we shall need the following membership criteria for $I^t$ and its integral closure $\overline{I^t}$, 
which were first presented in \cite[\textsection 3]{Tr1},  \cite[\textsection 2]{Tr2} without proofs and recently in \cite{HTr}.\par

Let $I = (x^{\a_1},...,x^{\a_m})$, where $\a_1,...,\a_m \in \NN^n$.
Let $M_I$ be the matrix whose columns are $\a_1,...,\a_m$.
For any vector $\v = (v_1,...,v_m)$ we set $|\v| = v_1 + \cdots + v_m$.
For every $\a \in \NN^n$, we define
$$\nu_\a(I) := \max\{|\v| \mid \v \in \NN^m, \v \cdot M_I \le \a\},$$
where the inequality is taken componentwise. 
Note that  $\nu_{t\a}(I) \ge t\nu_\a(I)$ for all $t \ge 1$. 

\begin{lem} \label{member} \cite[Proposition 3.1(i)]{Tr1} \cite[2.1.2]{Tr2}
$x^\a \in I^t$ if and only if $\nu_\a(I) \ge t$.
\end{lem}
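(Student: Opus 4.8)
The plan is to unwind the definition of $I^t$ as a monomial ideal and match it directly with the lattice-point description packaged in $\nu_\a(I)$. Recall that $I^t$ is generated by the monomials $x^{\a_{i_1}}\cdots x^{\a_{i_t}} = x^{\a_{i_1}+\cdots+\a_{i_t}}$, where $i_1,\dots,i_t$ range over all (not necessarily distinct) indices in $[m]$. Collecting equal indices, such a generator is precisely $x^{\v\cdot M_I}$ for some $\v\in\NN^m$ with $|\v|=t$, where $\v\cdot M_I = \sum_{j=1}^m v_j\a_j$. Since a monomial lies in a monomial ideal exactly when it is divisible by one of the monomial generators, this gives the key reformulation: $x^\a\in I^t$ if and only if there is a vector $\v\in\NN^m$ with $|\v|=t$ and $\v\cdot M_I\le\a$ componentwise.

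First I would prove the forward implication. If $x^\a\in I^t$, choose $\v\in\NN^m$ as in the reformulation above, with $|\v|=t$ and $\v\cdot M_I\le\a$; this $\v$ is feasible for the optimization defining $\nu_\a(I)$, so $\nu_\a(I)\ge|\v|=t$.

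For the converse, assume $\nu_\a(I)\ge t$ and pick $\v\in\NN^m$ with $\v\cdot M_I\le\a$ and $|\v|\ge t$. Because the coordinates of $\v$ are nonnegative integers summing to at least $t$, I can lower some of them to obtain $\w\in\NN^m$ with $\w\le\v$ componentwise and $|\w|=t$. Since every $\a_j$ has nonnegative entries, $\w\le\v$ forces $\w\cdot M_I\le\v\cdot M_I\le\a$; thus $x^{\w\cdot M_I}$ is a generator of $I^t$ that divides $x^\a$, and hence $x^\a\in I^t$.

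I do not expect a genuine obstacle here: the statement is a bookkeeping translation between the generators of $I^t$ and nonnegative integer vectors $\v$ with $|\v|=t$. The only two points that deserve a moment of care are the identification of the generators of $I^t$ with the monomials $x^{\v\cdot M_I}$, $|\v|=t$ (which just uses that multiplying monomials adds exponent vectors), and the truncation step in the converse, where passing from $\v$ to a smaller $\w$ with $|\w|=t$ preserves the inequality $\w\cdot M_I\le\a$ precisely because $M_I$ has nonnegative entries.
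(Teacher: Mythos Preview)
Your proof is correct and follows essentially the same route as the paper: both identify the generators of $I^t$ with $x^{\v\cdot M_I}$ for $\v\in\NN^m$, $|\v|=t$, and then read off the equivalence from the definition of $\nu_\a(I)$. You are slightly more explicit than the paper in the converse direction, spelling out the truncation from $\v$ with $|\v|\ge t$ to $\w$ with $|\w|=t$ using nonnegativity of $M_I$, but this is the same argument.
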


\begin{proof}
It is clear that $x^\a \in I^t$ if and only if there exist integers $v_1,...,v_m \ge 0$ with $v_1 + \cdots + v_m = t$
and $\b \in \NN^n$ such that
$$\a = v_1\a_1 + \cdots + v_m\a_m + \b.$$
This condition means that there exist $\v = (v_1,...,v_m) \in \NN^m$ with $|\v| = t$ such that $\v \cdot M_I \le \a$.
Therefore, the conclusion follows from the definition of $\nu_\a(I)$.
\end{proof}

We approximate $\nu_\a(I)$ by the number
$$\nu_\a^*(I) := \max\{|\v| \mid \v \in \RR_+^m, \v \cdot M_I \le \a\},$$
where $\RR_+$ denotes the set of non-negative real numbers.
Note that we always have $\nu_\a^*(I) \ge \nu_\a(I)$ and $\nu_{t\a}^*(I) = t\nu_\a^*(I)$ for all $t \ge 1$. 
 
The number $\nu_\a^*(I)$ can be computed by linear programming. Let
$$N_\a(I) := \{\v \in \RR_+^m \mid \v \cdot M_I \le \a\}.$$
Then $N_\a(I)$ is a rational convex polyhedron. 
It is well known that $\nu_\a^*(I) = |\v|$ for some vertex $\v$ of $N_\a(I)$ (see e.g. \cite{Sc}). 
Every vertex of $N_\a(I)$ is the solution of equations of the system $\v \cdot M_I = \a$.
By Cramer's rule, there exists a positive integer $q$ depending only on the matrix $M_I$ (not on $\a$) such that $q\v \in \NN^m$ for all vertices $\v$ of $N_\a(I)$ for all $\a \in \NN^n$.
\par
 
\begin{lem} \label{closure} \cite[Proposition 3.1(ii)]{Tr1} \cite[2.1.4]{Tr2}
$x^\a \in \overline{I^t}$ if and only if $\nu_\a^*(I) \ge t$.
\end{lem}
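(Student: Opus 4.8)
The plan is to mimic the structure of the proof of Lemma \ref{member}, replacing the integer-point membership criterion for $I^t$ with a criterion for the integral closure $\overline{I^t}$, and then to convert the real-relaxation quantity $\nu_\a^*(I)$ into the existence of an integer power witnessing integral closure. First I would recall the valuative/monomial description of the integral closure already quoted in the excerpt: a monomial $x^\a$ lies in $\overline{I^t}$ if and only if $x^{k\a} \in I^{kt}$ for some $k \ge 1$. Combining this with Lemma \ref{member}, we get
$$x^\a \in \overline{I^t} \iff \nu_{k\a}(I) \ge kt \ \text{ for some } k \ge 1.$$
So the whole task reduces to showing that the condition ``$\nu_{k\a}(I) \ge kt$ for some $k\ge 1$'' is equivalent to $\nu_\a^*(I) \ge t$.

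The easier implication is that $\nu_\a^*(I) \ge t$ forces such a $k$ to exist. Here I would use the integrality statement established just before the lemma: there is a positive integer $q$, depending only on $M_I$, such that every vertex $\v$ of $N_\a(I)$ has $q\v \in \NN^m$. Pick a vertex $\v$ of $N_\a(I)$ with $|\v| = \nu_\a^*(I) \ge t$. Then $q\v \in \NN^m$ and $(q\v)\cdot M_I \le q\a$, so $\nu_{q\a}(I) \ge |q\v| = q\,\nu_\a^*(I) \ge qt$; thus $k = q$ works. For the reverse implication, if $\nu_{k\a}(I) \ge kt$ for some $k$, choose $\v \in \NN^m$ with $\v \cdot M_I \le k\a$ and $|\v| \ge kt$; then $(\v/k) \in \RR_+^m$ lies in $N_\a(I)$ and has size $\ge t$, so $\nu_\a^*(I) \ge t$. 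Together these give the claimed equivalence, hence the lemma.

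The step I expect to require the most care is the direction using the integer $q$: one must be sure that $\nu_\a^*(I)$ is actually attained at a vertex of the polyhedron $N_\a(I)$ (not merely approached), which is the standard fact that a bounded linear functional on a pointed rational polyhedron attains its maximum at a vertex — and here $N_\a(I) \subseteq \RR_+^m$ is pointed and the functional $|\cdot|$ is bounded above on it because $\a \in \NN^n$ and the $\a_i$ are nonzero. A secondary subtlety is that the constant $q$ must be chosen uniformly in $\a$; this is exactly what the Cramer's-rule argument preceding the lemma provides, since the vertices solve square subsystems of $\v \cdot M_I = \a$ whose coefficient matrices (the minors of $M_I$) do not depend on $\a$. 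Once these two points are in place, the rest is the routine scaling manipulation sketched above.
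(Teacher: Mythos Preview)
Your proposal is correct and follows essentially the same route as the paper: both directions use the characterization $x^\a \in \overline{I^t} \iff x^{k\a} \in I^{kt}$ for some $k \ge 1$ together with Lemma~\ref{member}, and the key step for the implication $\nu_\a^*(I) \ge t \Rightarrow x^\a \in \overline{I^t}$ is clearing denominators at an optimal vertex of $N_\a(I)$ via the integer $q$. One small remark: for this lemma you do not actually need $q$ to be uniform in $\a$---a $q$ depending on the single vertex $\v$ suffices here; the uniformity only becomes relevant later in the proof of Theorem~\ref{decreasing}.
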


\begin{proof}
Let $x^\a \in \overline{I^t}$. Then $x^{s\a} \in I^{st}$ for some $s \ge 1$.
Therefore, $\nu_{s\a}(I) \ge st$. Hence, $s\nu_\a^*(I) = \nu_{s\a}^*(I) \ge st$.
This implies $\nu_\a^*(I) \ge t$.

Conversely, assume that $\nu_\a^*(I) \ge t$. Let $\v$ be a vertex of $N_\a(I)$
such that $\nu_\a^*(I) = |\v|$. Let $q$ be a positive integer such that $q\v \in \NN^m$.
Since the vertices of $N_{q\a}(I)$ are the $q$-multiples of the vertices of $N_\a(I)$, we have
$\nu_{q\a}^*(I) = |q\v|$. Since $q\v \in \NN^m$, this implies $\nu_{q\a}(I) = |q\v| = q\nu_\a^*(I) \ge qt$.
Therefore, $x^{q\a} \in I^{qt}$. Hence, $x^\a \in \overline{I^t}$.
\end{proof}

Using the above lemmas we show that if a symbolic power $I^{(s)}$ is integrally closed, then $\depth R/I^{(s)} \ge \depth R/I^{(t)}$ for $t \gg 0$. 

\begin{thm} \label{decreasing}
Let $I$ be a monomial ideal such that $I^{(s)}$ is integrally closed for some $s \ge 1$. Then \par
{\rm (i)} $\depth R/I^{(s)} \ge \depth R/I^{(st)}$ for all $t \ge 1$. \par
{\rm (ii)} There is a constant $a$ such that $\depth R/I^{(s)} \ge \depth R/I^{(t)}$ for $t \ge as^2$.
\end{thm}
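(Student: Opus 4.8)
The plan is to reduce the comparison of $\depth R/I^{(s)}$ with $\depth R/I^{(st)}$ (respectively $\depth R/I^{(t)}$) to a comparison of degree complexes via Proposition \ref{compare}, exactly as in the proof of Proposition \ref{integral}, but now taking into account that $I^{(s)}$ need not equal $I^{(1)}$ and that the exponent $t$ in part (ii) need not be a multiple of $s$. The key point is that since $I^{(s)}$ is integrally closed, Proposition \ref{closed} gives that each primary component $I_F^s$ is integrally closed for $F \in \F(I)$, and then by the standard fact recalled before Proposition \ref{integral}, membership $x^\bsa \in I_F^s$ is detected by $x^{t\bsa} \in I_F^{st}$ for all $t$. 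Since $\F(I^{(s)}) = \F(I^{(st)}) = \F(I)$ and $(I^{(s)})_F = I_F^s$, $(I^{(st)})_F = I_F^{st}$ by Lemma \ref{decomposition}, Proposition \ref{unmixed} then yields $\D_\bsa(I^{(s)}) = \D_{t\bsa}(I^{(st)})$ for every $\bsa \in \ZZ^n$ (matching $G_\bsa$ with $G_{t\bsa} = G_\bsa$). Applying Proposition \ref{compare} with the pair $\bsb = t\bsa$ gives part (i).

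For part (ii), the plan is to handle an arbitrary exponent $t$ by writing it as $t = qs + r$ and relating $I_F^t$ to a power of $I_F^s$; the obstruction is that $I_F^t$ itself need not be integrally closed, so I cannot directly say $x^\bsa \in I_F^t \iff x^{t'\bsa} \in I_F^{tt'}$. This is precisely where Lemmas \ref{member} and \ref{closure} enter. Using $\nu_\bsa$ and $\nu_\bsa^*$ for the generators of $I_F$, I would show that for $t \gg 0$ the two functions $\nu_\bsa(I_F^{\lceil t/s\rceil s})$-type bounds sandwich the actual membership: since $I_F^s = \overline{I_F^s}$, for the generating set giving $M_{I_F}$ we have $x^{s\bsa}\in I_F^s \iff \nu_{s\bsa}^*(I_F) \ge s$ (Lemma \ref{closure}) $\iff \nu_{s\bsa}(I_F) \ge s$ (Lemma \ref{member}), so $\nu$ and $\nu^*$ agree along the ray through $s\bsa$; combined with $\nu^*_{t\bsa}(I_F) = t\nu^*_\bsa(I_F)$ and $\nu_{t\bsa}(I_F) \ge t\nu_\bsa(I_F)$, the gap $\nu^*_{t\bsa}(I_F) - \nu_{t\bsa}(I_F)$ is bounded by a constant (the constant $q$ from Cramer's rule controlling denominators of vertices of $N_\bsa(I_F)$). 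Consequently there is $a$ such that for $t \ge as^2$ one has $x^{t\bsa} \in I_F^t \iff \nu^*_{t\bsa}(I_F) \ge t \iff \nu^*_\bsa(I_F) \ge 1 \iff x^\bsa \in \overline{I_F} \iff x^{s\bsa} \in I_F^s$, i.e.\ membership in $I_F^t$ is governed by the integrally closed ideal $I_F^s$. This again forces $\D_{(t/s)\bsc}(I_F^t)$-style equalities: more precisely, for each $\bsa \in \ZZ^n$ one produces $\bsb \in \ZZ^n$ with $G_\bsb \subseteq G_\bsa$ and $\D_\bsb(I^{(t)}) = \D_\bsa(I^{(s)})$, and Proposition \ref{compare} finishes.

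The \textbf{main obstacle} I anticipate is making the passage from "$\nu^*$ and $\nu$ differ by a bounded amount" to an explicit threshold of the shape $as^2$: one must track, over all finitely many $F \in \F(I)$ simultaneously, how the denominators $q_F$ of the polyhedra $N_\bsa(I_F)$ interact with the scaling by $s$, and check that $t \ge as^2$ is genuinely enough for the equivalence $\nu_{t\bsa}(I_F) \ge t \iff s \mid \nu^*_{s\bsa}(I_F) \ge s$ to hold for every relevant $\bsa$. The quadratic dependence on $s$ (rather than linear) should come from the fact that we need both $t$ large compared to the denominator $q_F$ \emph{and} $t/s$ large, and the denominator itself can be as big as $O(s)$ because the generators of $I_F^s$ have exponents scaled by $s$. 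I would isolate this as a lemma: there is a constant $a$, depending only on $I$, such that for all $t \ge as^2$, all $F \in \F(I)$, and all $\bsa \in \ZZ^n$, $x^{t\bsa_+} \in I_F^t$ if and only if $x^{s\bsa_+} \in I_F^s$. Granting this lemma, the rest is a verbatim adaptation of the proof of Proposition \ref{integral}.
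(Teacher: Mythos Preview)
Your plan for part (i) is correct and is exactly what the paper does, only unrolled: the paper simply notes $I^{(st)} = (I^{(s)})^{(t)}$ and cites Proposition~\ref{integral}, whose proof is precisely the degree-complex comparison you describe.

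For part (ii), your framework (Proposition~\ref{compare} together with Lemmas~\ref{member} and~\ref{closure} and the Cramer-rule constant $q$) is the same as the paper's, but two of your intermediate claims are wrong and would not carry you through as stated.

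First, the equivalence ``$x^{t\bsa}\in I_F^t \iff \nu^*_{t\bsa}(I_F)\ge t$'' is false in general: by Lemma~\ref{closure} this characterizes membership in $\overline{I_F^t}$, not in $I_F^t$. Only $I_F^s$ is assumed integrally closed, so for $I_F^t$ you must use Lemma~\ref{member} and work with $\nu_{t\bsa}(I_F)$.

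Second, and more seriously, your proposed lemma ``$x^{t\bsa_+}\in I_F^t \iff x^{s\bsa_+}\in I_F^s$ for all $\bsa$'' cannot serve as the bridge to Proposition~\ref{compare}: an arbitrary vector in $\ZZ^n$ is not of the form $s\bsa$, so this does not produce, for \emph{every} $\bsa$, a $\bsb$ with $\Delta_\bsb(I^{(t)}) = \Delta_\bsa(I^{(s)})$. What you actually need is an integer multiplier $m$ (depending on $s,t$ but not on $\bsa$ or $F$) such that
\[
x^{\bsa_+}\in I_F^s \ \Longleftrightarrow\ x^{m\bsa_+}\in I_F^t
\]
for all $\bsa$ and all $F\in\F(I)$; then $\bsb = m\bsa$ works. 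The paper's choice is $m = cq$ where $t = c(qs-1)+r$ with $1\le r\le qs-1$ and $r\le c$. The forward direction uses $\nu_{\bsa_+}(I_F)\ge s \Rightarrow \nu_{cq\bsa_+}(I_F)\ge cqs \ge t$; the backward direction uses that $\nu^*_{\bsa_+}(I_F)<s$ forces $\nu^*_{\bsa_+}(I_F)\le s-1/q$ (here the integrality of $q\nu^*$ enters), hence $\nu_{cq\bsa_+}(I_F)\le cq(s-1/q)=c(qs-1)<t$. Every $t\ge (qs-1)^2+1$ admits such a decomposition, which gives the threshold $as^2$.

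Finally, your explanation of the quadratic dependence is off: the denominator $q$ is determined by the matrices $M_{I_F}$, hence by $I$ alone, and does \emph{not} grow with $s$. The $s^2$ arises from the arithmetic condition $t\ge (qs-1)^2+1$, not from $q$ scaling with $s$.
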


\begin{proof}
By Lemma \ref{decomposition}, we have $I^{(st)} = (I^{(s)})^{(t)}$ for all $t \ge 1$.
Therefore, (i) follows from Proposition \ref{integral}. In particular, $\depth R/I^{(1)} \ge \depth R/I^{(t)}$ for all $t \ge 1$. 
Therefore, to prove (ii), we may assume that $s \ge 2$. We will show that there exists an integer $q > 0$ (independent of $s$) such that $\depth R/I^{(s)} \ge \depth R/I^{(t)}$ if $t = c(qs - 1) + r$ for some $c \ge 1$, $1 \le r \le qs-1$ and $r \le c$. This implies (ii) because this condition on $t$ is satisfied if $t \ge (qs-1)^2+1$. \par 

First, from the observation on the polyhedron $N_\a(I)$ before Lemma \ref{closure} we can see that there exists  a positive integer $q$ such that  $q\v$ is an integral vector for all vertices $\v$ of all polyhedra $N_\a(I_F)$, $F \in \F(I)$, $\a \in \NN^n$. 
Let $t = c(qs - 1) + r$ as above.
We will show that $\D_{cq\a}(I^{(t)}) = \D_\a(I^{(s)})$ for all $\a \in \ZZ^n$, which then implies $\depth R/I^{(s)} \ge \depth R/I^{(t)}$ by Proposition \ref{compare}. \par

Let $F \subseteq [n]$ such that $G_\a \subseteq F$.
By Proposition \ref{unmixed} and Lemma \ref{decomposition}, $F \setminus G_\a$ is a facet of $\D_\a(I^{(s)})$ if and only if 
$F \in \F(I)$ and $x^{\a_+} \not\in I_F^s$.
Since $I^{(s)}$ is integrally closed, $I_F^s$ is integrally closed by Proposition \ref{closed}.  
By Lemma \ref{closure}, $x^{\a_+} \in I_F^s$ if and only if $\nu_{\a_+}^*(I_F) \ge s$. 
Therefore, $F \setminus G_\a$ is a facet of $\D_\a(I^{(s)})$ if and only if $F \in \F(I)$ and $\nu_{\a_+}^*(I_F) < s$.
\par

Similarly as above, we can use Proposition \ref{unmixed} and Lemma \ref{member} to show that
$F \setminus G_\a$  is a facet of $\D_{cq\a}(I^{(t)})$ if and only if $F \in \F(I)$ and $\nu_{cq\a_+}(I_F) < t$. \par

Assume that $F \in \F(I)$. 
If $F \setminus G_\a$  is a facet of $\D_\a(I^{(s)})$, then $\nu_{\a_+}^*(I_F) < s$. 
We know that $\nu_{\a_+}^*(I_F) = |\v|$ for a vertex $\v$ of $N_{\a_+}(I_F)$.
By the choice of $q$, we have $q\nu_{\a_+}^*(I_F) = q|\v| \in \NN$. Hence
 $\nu_{\a_+}^*(I_F) \le s - 1/q$. From this it follows that
$$\nu_{cq\a_+}(I_F) \le \nu^*_{cq\a_+}(I_F) = cq\nu_{\a_+}^*(I_F) = cq(s-1/q) = c(qs-1) < t.$$
Therefore, $F \setminus G_\a$  is a facet of $\D_{cq\a}(I^{(t)})$. 
If $F \setminus G_\a$  is not a facet of $\D_\a(I^{(s)})$, then $\nu_{\a_+}(I_F) \ge s$. Hence,
$$\nu_{cq\a_+}(I_F) \ge  cq\nu_{\a_+}(I_F) \ge cqs = c(qs-1) + c \ge c(qs-1)+ r = t.$$
From this it follows that $F \setminus G_\a$  is not a facet of $\D_{cq\a}(I^{(t)})$. 
So we can conclude that $\D_{cq\a}(I^{(t)}) = \D_\a(I^{(s)})$, as required.
\end{proof}

If $I$ is a squarefree monomial, we know that $I^{(s)}$ is integrally closed for all $s \ge 1$ (see the proof below).
In this case, we can say more about the set of integers $t$  for which $\depth R/I^{(s)} \ge \depth R/I^{(t)}$. 
For all $s \ge 1$, set 
$$E(s) = \bigcup_{i \ge 1}\{t \in \NN|\ i(s-1)+1 \le t \le is\}.$$
Note that for $s \ge 2$, $E(s)$ is exactly the set of all integers $t$ of the form $t = c(s - 1) + r$ for some $c \ge 1$, $1 \le r \le s-1$ and $r \le c$. 

\begin{thm} \label{squarefree}
Let $I$ be an arbitrary squarefree monomial ideal. For all $s \ge 1$,
$\depth R/I^{(s)} \ge \depth R/I^{(t)}$ if $t \in E(s)$. 
\end{thm}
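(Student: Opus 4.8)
The plan is to reduce Theorem \ref{squarefree} to the mechanism already developed in the proof of Theorem \ref{decreasing}, but now exploiting the special feature of squarefree monomial ideals that lets us take $q = 1$. First I would record the standard fact that if $I$ is squarefree, then every symbolic power $I^{(s)}$ is integrally closed. Indeed, $\F(I)$ consists of squarefree prime ideals $P_F$ generated by subsets of the variables, and the associated primary component $I_F$ is itself generated by a subset of the variables; such an ideal $I_F = (x_i : i \notin F)$ has the property that all powers $I_F^s$ are integrally closed (this is the classical statement that powers of ideals generated by a subset of the variables, equivalently polynomial extensions of powers of the maximal ideal, are integrally closed). By Proposition \ref{closed} and Lemma \ref{decomposition} this gives that $I^{(s)}$ is integrally closed for every $s \ge 1$, so in particular Theorem \ref{decreasing} applies for every choice of $s$.

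The key improvement is the value of the constant $q$ appearing in the proof of Theorem \ref{decreasing}. That $q$ was chosen so that $q\v \in \NN^m$ for every vertex $\v$ of every polyhedron $N_\a(I_F)$, $F \in \F(I)$, $\a \in \NN^n$. When $I$ is squarefree, $I_F$ is generated by variables, so the matrix $M_{I_F}$ is (up to permutation of rows and columns) an identity-type $0/1$ matrix, and the polyhedron $N_\a(I_F) = \{\v \in \RR_+^m : \v \cdot M_{I_F} \le \a\}$ has only integral vertices. Hence we may take $q = 1$. I would spell this out carefully: for $F \in \F(I)$ we have $I_F = (x_i : i \in [n]\setminus F)$, so $\nu_{\bsb}(I_F)$ and $\nu^*_{\bsb}(I_F)$ coincide and equal $\min\{b_i : i \notin F\}$ for every $\bsb \in \NN^n$; in particular $\nu^*_{\a_+}(I_F)$ is already an integer.

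With $q = 1$ in hand, I would re-run verbatim the combinatorial computation from the proof of Theorem \ref{decreasing}. Write $t = c(s-1) + r$ with $c \ge 1$, $1 \le r \le s-1$, $r \le c$ — which, as noted in the remark just before the statement, is exactly the parametrization of $t \in E(s)$ for $s \ge 2$ (the case $s = 1$ being trivial since $I^{(1)} = I$ and $E(1) = \NN$, handled by Proposition \ref{integral}). The claim is that $\D_{c\a}(I^{(t)}) = \D_\a(I^{(s)})$ for all $\a \in \ZZ^n$. For $F \in \F(I)$ with $G_\a \subseteq F$: by Proposition \ref{unmixed}, Lemma \ref{decomposition}, Proposition \ref{closed} and Lemma \ref{closure}, $F \setminus G_\a$ is a facet of $\D_\a(I^{(s)})$ iff $\nu^*_{\a_+}(I_F) < s$, i.e.\ (since this value is an integer) iff $\nu_{\a_+}(I_F) \le s-1$; and by Proposition \ref{unmixed} and Lemma \ref{member}, $F\setminus G_\a$ is a facet of $\D_{c\a}(I^{(t)})$ iff $\nu_{c\a_+}(I_F) < t$. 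If $\nu_{\a_+}(I_F) \le s-1$ then $\nu_{c\a_+}(I_F) = c\,\nu_{\a_+}(I_F) \le c(s-1) < t$ (using that $\nu$ is multiplicative in $\a$ for variable ideals, or at worst the bound $\nu_{c\a_+}(I_F) \le \nu^*_{c\a_+}(I_F) = c\,\nu^*_{\a_+}(I_F)$); if $\nu_{\a_+}(I_F) \ge s$ then $\nu_{c\a_+}(I_F) \ge cs = c(s-1) + c \ge c(s-1) + r = t$. Hence the two sets of facets coincide, so the degree complexes are equal, and Proposition \ref{compare} yields $\depth R/I^{(s)} \ge \depth R/I^{(t)}$.

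The only genuine obstacle is the integrality input — verifying that for squarefree $I$ we can take $q = 1$, equivalently that $\nu^*_{\a_+}(I_F)$ is always an integer. Once that is isolated and proved (it follows from $I_F$ being generated by a subset of the variables, via the totally unimodular structure of $M_{I_F}$, or directly by the explicit formula $\nu_{\bsb}(I_F) = \min_{i\notin F} b_i$), everything else is a transcription of the argument already given for Theorem \ref{decreasing} with the sharper arithmetic $c(s-1)+c \ge c(s-1)+r$ replacing $c(qs-1)+c \ge c(qs-1)+r$. I would also remark that the parametrization of $E(s)$ coincides with the set $\{t = c(s-1)+r : c\ge 1,\ 1\le r\le s-1,\ r\le c\}$, which is the precise point where the range $E(s)$ enters, and that the complementary sharpness statement is deferred to Theorem \ref{prop_example_increasing}.
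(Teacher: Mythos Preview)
Your proposal is correct and follows essentially the same route as the paper: observe that $I^{(s)}$ is integrally closed for every $s$ because each $I_F = P_F$ is generated by variables, then specialize the proof of Theorem \ref{decreasing} with $q=1$, using that the columns of $M_{P_F}$ are unit vectors so the polyhedra $N_\a(P_F)$ have only integral vertices. One small slip to fix in the write-up: the explicit formula is $\nu_{\bsb}(P_F) = \sum_{i \notin F} b_i$ (the membership criterion for $P_F^t$), not $\min_{i \notin F} b_i$; this does not affect the argument, since you only use that $\nu^*_{\a_+}(P_F)$ is an integer.
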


\begin{proof}
Since $I$ is squarefree, $I_F = P_F$ for all $F \in \F(I)$.
By Lemma \ref{decomposition},  $I^{(s)} = \cap_{F \in \F(I)} P_F^s$ for all $s \ge 1$.
It is clear that $P_F^s$ is integrally closed. Hence, $I^{(s)}$ is integrally closed by Proposition \ref{closed}. \par

By Theorem \ref{decreasing}(i) we may assume that $s \ge 2$.
Let $\e_i$ denote the $i$-th unit vector. 
Then $P_F = (x^{\e_i}\mid i \not\in F)$. 
Hence the columns of the matrix $M_{P_F}$ are unit vectors.
Note that every vertex $\v$ of $N_\a(P_F)$ is a solution of the system $\v \cdot M_{P_F} = \a$.
Using Cramer's rule we can see that the vertices of $N_\a(P_F)$ 
are integral points for all $\a \in \NN^n$. 
Therefore, we may choose $q = 1$ in the proof of Theorem \ref{decreasing}.
Then $\depth R/I^{(s)} \ge \depth R/I^{(t)}$ if $t = c(s - 1) + r$ for some $c \ge 1$, $1 \le r \le s-1$ and $r \le c$. 
Since $E(s)$ is exactly the set of all such integers $t$, this gives the conclusion.
\end{proof}

It has been an open question whether the function $\depth R/I^{(t)}$ is non-increasing for all squarefree monomial ideals $I$. This question has a positive answer for cover ideals of arbitrary graphs, i.e. unmixed height 2 squarefree monomial ideals \cite[Theorem 3.2]{HKTT} (see \cite{CPSTY} for the case of bipartite graphs) and for edge ideals of very well covered graphs or graphs with leaves \cite[Theorem 5.2]{KTY}. The similar question for ordinary powers was raised as a conjecture in \cite[p. 535]{HH}, which has been recently settled in the negative \cite{KSS}. 

The following result shows that the range $E(s)$ in Theorem \ref{squarefree} is the best possible.
Since $E(s)$ is not the set of all integers $\ge s$ for $s \neq 1,2$, this result also gives a negative answer to the above question.

\begin{thm} \label{prop_example_increasing}
For each $s \ge 3$, there exists a squarefree monomial ideal $I$ such that
$\depth R/I^{(s)} < \depth R/I^{(t)}$ if and only if $t\not\in E(s)$.
\end{thm}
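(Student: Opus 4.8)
The plan is to construct, for each fixed $s \ge 3$, a squarefree monomial ideal $I$ whose symbolic powers realize exactly the reverse inequality off $E(s)$. The first step is to exploit the decomposition $I^{(t)} = \bigcap_{F\in\F(I)} P_F^t$ from Lemma \ref{decomposition} together with the degree-complex calculus of Section 1. Since the depth of $R/I^{(t)}$ is governed, via Corollary \ref{depth} and Proposition \ref{unmixed}, by which subsets $F$ have $F\setminus G_\a$ a facet of $\D_\a(I^{(t)})$, and since for squarefree $I$ the facet condition at a vertex $\a_+$ is simply $\nu_{\a_+}(P_F) < t$ (equivalently $x^{\a_+}\notin P_F^t$), the whole behavior is controlled by the numbers $\nu_\a(P_F)$, which for a prime $P_F$ generated by unit vectors is just $\sum_{i\notin F} (a_i)_+$. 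So I would look for an ideal with very few minimal primes, arranged so that the local cohomology of $R/I^{(t)}$ in low degrees turns on and off precisely according to whether $t\in E(s)$.

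The second step is the concrete construction. The arithmetic of $E(s)$ — namely $t = c(s-1)+r$ with $1\le r\le s-1$ and $r\le c$ — strongly suggests building $I$ from primes whose intersection forces a comparison between $\nu_\a(P_F)/s$ and $\nu_\a(P_G)/(\text{something involving } s-1)$; the "gap" integers $t$ just below a multiple of $s$ but not reachable as $c(s-1)+r$ with $r\le c$ are exactly those for which a certain monomial $x^{\a_+}$ lies in $P_F^t$ for one relevant $F$ but not another. Concretely I expect $I$ to be the Alexander-dual-type ideal of a small simplicial complex on $O(s)$ vertices, with two or three facets chosen so that the only nonvanishing reduced homology of $\D_\a(I^{(t)})$ in the critical range is an $\widetilde H_0$ coming from disconnectedness of $\D_\a$ for one parity class of $t$ and not the other. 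Using Proposition \ref{depth2}, I would verify $\depth R/I^{(t)} \ge 2$ (so the value is $\ge 2$) exactly when $t\notin E(s)$, by checking conditions (i)–(iii) for those $t$, and conversely exhibit a single $\a\in\NN^n$ with $\D_\a(I^{(t)})$ disconnected when $t\in E(s)$, forcing $\depth R/I^{(t)} = 1$. Since for $t\in E(s)$ we already know $\depth R/I^{(s)} \ge \depth R/I^{(t)}$ by Theorem \ref{squarefree}, it suffices to arrange $\depth R/I^{(s)} = 1$; then the strict inequality $\depth R/I^{(s)} < \depth R/I^{(t)} = 2$ holds precisely for $t\notin E(s)$, as desired.

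The main obstacle will be the second step: pinning down an explicit family of ideals, one for each $s$, and proving that the disconnectedness of the relevant degree complexes flips at exactly the boundary of $E(s)$ — i.e. that there is \emph{some} $\a$ giving a disconnected $\D_\a(I^{(t)})$ for every $t\in E(s)$, while \emph{no} $\a$ does so for $t\notin E(s)$. The first half is an existence statement (find one good $\a$ depending on $t$), which one attacks by solving the linear Diophantine system $\v\cdot M_{P_F}\le \a$ so that $\nu_{\a_+}(P_F)$ and $\nu_{\a_+}(P_G)$ straddle $t$ in the required way; the second half is a universal statement over all $\a\in\NN^n$, which is the delicate part and will likely require a careful case analysis on $\a$ using the identity $\nu_{\a_+}(P_F) = \sum_{i\notin F}(a_i)_+$ and the number-theoretic description of $E(s)$. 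I would isolate this as a lemma of the form "for the constructed $I$, $\D_\a(I^{(t)})$ is connected for all $\a$ iff $t\notin E(s)$," reducing everything else to the Section 1 machinery and Theorem \ref{squarefree}.
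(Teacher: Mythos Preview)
Your framework is sound --- degree complexes, Proposition~\ref{unmixed}, and the identity $\nu_{\a_+}(P_F)=\sum_{i\notin F}a_i$ are exactly the right tools --- but the proposal remains a plan rather than a proof: you never produce the ideal. The ``main obstacle'' you flag at the end is the entire content of the theorem, and your expectations about what the construction should look like (two or three minimal primes, depth oscillating between $1$ and $2$, disconnectedness of $\D_\a$) are not obviously realizable. With only two or three facets in $\F(I)$ the combinatorics of $\D_\a(I^{(t)})$ is very limited, and it is not clear one can encode the arithmetic of $E(s)$ --- which hinges on the interaction between the moduli $s$ and $s-1$ --- into a disconnectedness condition on so small a complex.

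The paper's construction is substantially different from what you anticipate. It works in $R=k[x_{i,j}\mid 1\le i\le s,\,1\le j\le s-1]$, with $s(s-1)$ variables, and takes $I=P_1\cap\cdots\cap P_s\cap Q$ where $P_i=(x_{i,1},\dots,x_{i,s-1})$ and $Q$ is the complete intersection $(f_1,\dots,f_s)$ with $f_i=x_{i,1}\cdots x_{i,s-1}$; thus $I$ has $s+(s-1)^s$ minimal primes, not two or three. The depth values are $s-1$ versus $\ge s$, not $1$ versus $2$, and the relevant nonvanishing homology is $\widetilde H_{s-2}(\D_\a(I^{(t)}),k)$, computed via Borsuk's nerve theorem (the nerve is the boundary of an $(s-1)$-simplex), not $\widetilde H_0$. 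The dichotomy on $t$ is captured not by a disconnectedness criterion but by the containment $Q^t\subseteq P_1^t+\cdots+P_s^t$, which the paper shows holds precisely when $t\notin E(s)$; the lower bound $\depth R/I^{(t)}\ge s$ for $t\notin E(s)$ then comes from the depth-of-intersection estimate of Lemma~\ref{lem_depth_intersectionandsum} applied to the ideals $Q_i^{(t)}=P_i^t\cap Q^t$. So while your reduction strategy via Theorem~\ref{squarefree} is correct, the actual example lives in a much larger ring and is detected by higher homology rather than by Proposition~\ref{depth2}.
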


\begin{proof}
Let $R=k[x_{i,j}|\ 1\le i\le s, 1\le j\le s-1]$ and
$$I = P_1 \cap \cdots \cap P_s \cap Q,$$
where
\begin{align*}
P_i &= (x_{i,1},x_{i,2},\ldots,x_{i,s-1}),\ i = 1,...,s,\\
Q & = \bigcap_{1\le j_1,\ldots,j_s \le s-1} (x_{1,j_1},\ldots,x_{s,j_s}).
\end{align*}
We will show that $\depth R/I^{(t)} = s-1$ if $t \in E(s)$ and $\depth R/I^{(t)} \ge  s$ if $t \not\in E(s)$. 
Since $s \in E(s)$, this implies the statement of Theorem \ref{prop_example_increasing}.  \par

It is easy to see that $Q = (f_1,...,f_s)$, where
$$f_i = x_{i,1}\cdots x_{i,s-1},\ i = 1,...,s.$$ 
Hence, $Q$ is a complete intersection. From this it follows that $Q^{(t)} = Q^t$ for all $t \ge 1$. Therefore, 
$$I^{(t)} = P_1^t \cap \cdots \cap P_s^t \cap Q^t$$
by Lemma \ref{decomposition}.

Let $Q_i = P_i \cap Q$, $i = 1,...,s$. By Lemma \ref{decomposition}, $Q_i^{(t)} = P_i^t \cap Q^t$. Hence, 
$$I^{(t)} = Q_1^{(t)} \cap \cdots \cap Q_s^{(t)}.$$
In view of Lemma \ref{lem_depth_intersectionandsum}, we will estimate 
$\depth R/(Q_{j_1}^{(t)}+\cdots+Q_{j_i}^{(t)})$ for $1 \le i \le s$ and $1\le j_1< \cdots <j_i\le s$.
Without restriction we only need to estimate $\depth R/(Q_1^{(t)}+\cdots+Q_i^{(t)})$ for $1 \le i \le s.$
\medskip

\noindent {\bf Claim 1.} 
$\depth R/(Q_1^{(t)}+\cdots+Q_s^{(t)}) \ge 1$ if and only if $t \not\in E(s)$.

\begin{proof}[Proof of Claim 1]
We have
\[
Q_1^{(t)}+\cdots+Q_s^{(t)} = (P_1^t+\cdots+P_s^t) \cap Q^t.
\]
Since $P_1^t+\cdots+P_s^t$ is a primary ideal of the maximal homogeneous ideal of $R$,
$\depth R/(Q_1^{(t)}+\cdots+Q_s^{(t)}) \ge 1$ if and only if $Q^t \subseteq P_1^t+\cdots+P_s^t$.
It remains to show that $Q^t \subseteq P_1^t+\cdots+P_s^t$ if and only if $t \not\in E(s)$.

If $t\in E(s)$, we have $t= c(s-1)+r$, where $c \ge 1$, $1\le r \le s-1$ and $r \le c$. 
Set $g =(f_1\cdots f_{s-1})^cf_s^r$. Since $f_i \in Q$, $1\le i\le s$, we have $g \in Q^{c(s-1)+r}= Q^t$. 
For $i = 1,...,s$, let $d_i$ be the degree of $g$ in the variables $x_{i,1},\ldots,x_{i,s-1}$. 
Then $d_i=c(s-1)$ if $1\le i\le s-1$ and $d_i = r(s-1)$ if $i=s$. 
In both cases, we have $d_i \le c(s-1) < t$. 
Since $P_i =  (x_{i,1},\ldots,x_{i,s-1})$, this implies $g \notin P_i^t$ for $i = 1,...,s$. 
As $g$ is a monomial, $g \not\in P_1^t+\cdots+P_s^t$. 
Therefore, $Q^t \not\subseteq P_1^t+\cdots+P_s^t$.

If $t \not\in E(s)$, we have $t = c(s-1)+r$, where $1\le r \le s-1$ and $0 \le c \le r-1$.
Let $h = f_1^{c_1} \cdots f_s^{c_s}$ be an arbitrary minimal monomial generator of $Q^t = (f_1,...,f_s)^t$, 
where $c_1,\ldots,c_s\ge 0$ and $c_1+\cdots+c_s=t$.
Since $t > c(s-1)+c = cs$, at least one of the numbers $c_1,\ldots,c_s$ is greater than $c$. 
If $c_i > c$,  
$$c_i(s-1) \ge (c+1)(s-1) \ge c(s-1)+r = t.$$
By the definition of $f_i$ and $P_i$, we have $f_i \in P_i^{s-1}$.
Hence, $f_i^{c_i} \in P_i^{c_i(s-1)} \subseteq P_i^t$. So we get $h \in P_i^t \subseteq P_1^t+\cdots+P_s^t$.
Therefore, $Q^t \subseteq P_1^t+\cdots+P_s^t$, as desired.
\end{proof}

\noindent{\bf Claim 2.}  
$\depth R/(Q_1^{(t)}+\cdots+Q_i^{(t)}) \ge s-i+1$ for all $1 \le i \le s-1$.

\begin{proof}[Proof of Claim 2]
Note that
$$Q_1^{(t)}+\cdots+Q_i^{(t)} = (P_1^t+ \cdots+P_i^t) \cap Q^t.$$
Consider the exact sequence
\[
0 \to \frac{R}{(P_1^t+ \cdots+P_i^t) \cap Q^t} \to \frac{R}{P_1^t+ \cdots+P_i^t} \oplus  \frac{R}{Q^t} \to \frac{R}{P_1^t+ \cdots+P_i^t + Q^t} \to 0.
\]
Then we have
\begin{align*}
& \depth \frac{R}{Q_1^{(t)}+\cdots+Q_i^{(t)}} = \depth \frac{R}{(P_1^t+ \cdots+P_i^t) \cap Q^t}\\
& \ge \min  \left\{ \depth \frac{R}{P_1^t+ \cdots+P_i^t}, \depth \frac{R}{Q^t}, 
                                \depth \frac{R}{P_1^t+ \cdots+P_i^t + Q^t}+1 \right\}.
\end{align*}
Since $P_1,...,P_i$ are generated by disjoint sets of $s-1$ variables,
\[
\depth \frac{R}{P_1^t+ \cdots+P_i^t} = \depth \frac{R}{P_1+ \cdots+P_i} = (s-i)(s-1) \ge s-i+1.
\]
Since $Q$ is a complete intersection generated by $s$ elements, 
$$\depth R/Q^t=s(s-1)-s = s(s-2).$$
If $i = 1$, we have $s(s-2) \ge s+1$ because $s > 2$. If $i \ge 2$, we have $s(s-2) \ge s(i-1) \ge s \ge s - i + 1$.
Hence,
$$\depth R/Q^t \ge s-i+1.$$
It remains to show that
$$\depth \frac{R}{P_1^t+ \cdots+P_i^t + Q^t}  \ge s-i.$$
For this, it suffices to show that $x_{i+1,1}-x_{i+1,2},\ldots,x_{s,1}-x_{s,2}$ is a regular sequence for $R/(P_1^t+\cdots+P_i^t+Q^t)$.

It is clear that
$x_{i+1,1}-x_{i+1,2},\ldots,x_{s,1}-x_{s,2}$ form a regular sequence for all factor rings of the form $R/(x_{1,j_1},\ldots,x_{s,j_s})^t$. Note that
\[
Q^t = Q^{(t)} = \bigcap_{1\le j_1,\ldots,j_s \le s-1} (x_{1,j_1},\ldots,x_{s,j_s})^t
\]
by Lemma \ref{decomposition}. Then $x_{i+1,1}-x_{i+1,2},\ldots,x_{s,1}-x_{s,2}$ form a regular sequence for $R/Q^t$ by Lemma \ref{regular}(i). 
On the other hand, $P_1 + \cdots +P_i$ is the only associated prime of $P_1^t+\cdots+P_i^t$ and none of the minimal generators of 
$P_1 + \cdots +P_i$ involves any of the variables $x_{i+1,1},x_{i+1,2},\ldots,x_{s,1},x_{s,2}$.
Therefore, $x_{i+1,1}-x_{i+1,2},\ldots,x_{s,1}-x_{s,2}$ form a regular sequence for $R/(P_1^t+\cdots+P_i^t+Q^t)$ by Lemma \ref{regular}(ii). 
\end{proof}

Now we are going to use the above claims to prove that $\depth R/I^{(t)} = s-1$ if $t \in E(s)$ and 
$\depth R/I^{(t)} \ge s$ if $t \not\in E(s)$. \par

If $t\in E(s)$, then $\depth R/(Q_1^{(t)}+\cdots+Q_s^{(t)}) = 0$ by Claim 1. Together with Claim 2, we have
$$\depth R/(Q_1^{(t)}+\cdots+Q_i^{(t)}) \ge s-i$$
for all $1 \le i \le s$. By Lemma \ref{lem_depth_intersectionandsum}, this implies
\[
\depth R/I^{(t)}= \depth R/(Q_1^{(t)}\cap \cdots \cap Q_s^{(t)}) \ge s-1.
\]
By Corollary \ref{depth}, $\depth R/I^{(t)} \le s-1$ if there exists $\a \in \NN^{(s-1)s}$ such that 
$\widetilde{H}_{s-2}(\Delta_\a(I^{(t)}),k) \neq 0$.

By the definition of $E(s)$, $t= c(s-1)+r$, where $c \ge 1$, $1\le r \le s-1$, and $r \le c$. 
Let $\a \in \NN^{(s-1)s}$ such that $x^\a = (f_1\cdots f_{s-1})^cf_s^r$. 
Due to the proof of Claim 1, we have $x^\a \in Q^t \setminus (P_1^t+\cdots+P_s^t)$. 
Let $X$ denote the set of the variables of $R$.
For convenience, we consider $\D_\a(I^{(t)})$ as a simplicial complex on $X$.
By Proposition \ref{unmixed}, a subset $F \subseteq X$ is a  facet of $\Delta_\a(I^{(t)})$ if and only if $(X \setminus F)$ is a minimal prime of $I$ and $x^\a \not\in (X \setminus F)^t$.
Therefore,
\[
\Delta_\a(I^{(t)}) =\ <F_1,...,F_s>,
\]
where $F_i = X \setminus \{x_{i,1},\ldots,x_{i,s-1}\}$. Let 
$$\Gamma =\{A \subseteq [s]|\ \bigcap_{i\in A} F_i \neq \emptyset\},$$
which is the nerve of the complex $\Delta_\a(I^{(t)})$. 
By Borsuk's nerve theorem \cite[Theorem 10.7]{Bj},
$\Delta_\a(I^{(t)})$ and $\Gamma$ have the same homology groups. 
It is easy to see that $\Gamma$ is the boundary complex of the $(s-1)$-simplex $[1,\ldots,s]$. 
Therefore,
\[
\widetilde{H}_{s-2}(\Delta_\a(I^{(t)}),k) = \widetilde{H}_{s-2}(\Gamma,k) \neq 0.
\] 
So we get $\depth R/I^{(t)} = s-1$ if $t\in E(s)$. \par

If $t \not\in E(s)$, then $\depth R/(Q_1^{(t)}+\cdots+Q_s^{(t)}) \ge 1$ by Claim 1. Together with Claim 2, we have
$$\depth R/(Q_1^{(t)}+\cdots+Q_i^{(t)}) \ge s-i+1$$
for all $1 \le i \le s$. 
By Lemma \ref{lem_depth_intersectionandsum}, this implies
\[
\depth R/I^{(t)}=\depth R/(Q_1^{(t)}\cap \cdots \cap Q_s^{(t)}) \ge s.
\]
The proof of Theorem \ref{prop_example_increasing} is now complete.
\end{proof}

Though we have shown that the function $\depth R/I^{(t)}$ need not be non-increasing for a squarefree monomial $I$, we are not able to give an answer to the following question. 

\begin{quest} (cf. \cite[Problem 1.2]{KTY})
Is the function $\depth R/I^{(t)}$ non-increasing if $I$ is the edge ideal of a graph?
\end{quest}

Note that the analogous question for the function $\depth R/I^t$ is widely open (see e.g \cite{HH,HQ}).


\section{Convergent symbolic depth functions}

We call a numerical function $f(t)$ {\em asymptotically periodic} if $f(t)$ is a periodic function for $t \gg 0$.
This notion arises when we consider the symbolic depth functions of monomial ideals. \par

Let $R = k[x_1,...,x_n]$ be a polynomial ring over a field $k$. 

\begin{prop}  \label{Rees}
Let $I$ be a homogeneous ideal in $R$ whose symbolic Rees algebra is finitely generated.
Then $\depth R/I^{(t)}$ is an asymptotically periodic function.
\end{prop}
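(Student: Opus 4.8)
The plan is to exploit the finite generation of the symbolic Rees algebra $R_s(I) = \bigoplus_{t \ge 0} I^{(t)}$ to convert the question into one about a single $\ZZ$-graded module over a standard-graded algebra, where the Noetherian hypothesis forces an asymptotically periodic behavior of depth. First I would pick homogeneous generators $x^{\bsa_1},\dots,x^{\bsa_m}$ of $R_s(I)$ sitting in symbolic degrees $e_1,\dots,e_m$ (in the monomial case these are monomials, but the argument only uses finite generation), and let $e = \operatorname{lcm}(e_1,\dots,e_m)$ or, more simply, set $e$ to be a common multiple large enough that the Veronese-type subalgebra $\bigoplus_{t \ge 0} I^{(te)}$ is \emph{standard graded}, i.e. generated in symbolic degree $e$. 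This is the standard reduction: for a finitely generated $\NN$-graded algebra there is a positive integer $e$ whose $e$-th Veronese is generated in degree one.

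Next I would work separately along each residue class. Fix $0 \le d < e$ and consider the $R$-module $M_d = \bigoplus_{t \ge 0} I^{(te+d)}$, which is a finitely generated graded module over the standard-graded Noetherian algebra $A = \bigoplus_{t \ge 0} I^{(te)}$ (finite generation of $M_d$ over $A$ again follows from finite generation of $R_s(I)$). The key input is a theorem on the behavior of depth in graded families: for a finitely generated graded module over a standard-graded Noetherian ring, $\depth$ of the graded pieces — viewed here as the $R$-modules $I^{(te+d)}$, equivalently $R/I^{(te+d)}$ up to the shift $\depth R/I^{(te+d)} = \depth I^{(te+d)} - 1$ when both are positive, or directly via $\depth R/J$ — is eventually constant in $t$. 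Concretely, one applies the result that for such a family the associated primes stabilize and the local cohomology modules $H^i_\mm$ assemble into finitely generated modules over the Rees-type algebra, so $H^i_\mm(R/I^{(te+d)}) = 0$ for $t \gg 0$ is a condition that either holds for all large $t$ or fails for all large $t$; hence $\depth R/I^{(te+d)}$ is constant for $t \gg 0$. Doing this for each $d \in \{0,1,\dots,e-1\}$ and taking the maximum of the finitely many thresholds yields a single $t_0$ beyond which $\depth R/I^{(t)}$ depends only on $t \bmod e$, which is exactly the assertion that $\depth R/I^{(t)}$ is asymptotically periodic with period (dividing) $e$.

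The main obstacle is justifying the eventual constancy of $\depth$ along each arithmetic progression. One cannot simply invoke Brodmann's theorem for ordinary powers, since $\{I^{(te+d)}\}_t$ is not the family of powers of an ideal; what saves the argument is precisely that, after passing to the Veronese, $\{I^{(te+d)}\}_t$ \emph{is} the family of graded components of a finitely generated module over a standard-graded Noetherian algebra, which is the natural generality in which Brodmann-type stabilization holds (this can be derived from the finite generation of $\bigoplus_i \bigoplus_t H^i_\mm(R/I^{(te+d)})$ as a module over the appropriate bigraded algebra, or cited from the literature on graded families of ideals). The remaining bookkeeping — matching $\depth R/I^{(t)}$ to $\depth$ of a module piece, handling the shift by $d$, and combining the $e$ residue classes — is routine and I would not spell it out in detail.
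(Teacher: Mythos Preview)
Your proposal is correct and follows essentially the same route as the paper: pass to a Veronese so that $\bigoplus_{t\ge 0} I^{(te)}$ is standard graded, view each $\bigoplus_{t\ge 0} I^{(te+d)}$ as a finitely generated graded module over it, and then invoke the eventual constancy of $\depth$ of the graded pieces. The only difference is that the paper dispatches the key step by a direct citation to Herzog--Hibi \cite[Theorem 1.1]{HH}, whereas you sketch a local-cohomology justification; both lead to the same conclusion.
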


\begin{proof}
The finite generation of the symbolic Rees algebra $R_s(I) = \oplus_{t \ge 0}I^{(t)}$ implies that there is an integer $d \ge 1$ such that the Veronese subring $S := \oplus_{t \ge 0}I^{(dt)}$ is standard graded \cite[Theorem 2.1]{HHT}.  
For $i = 0,...,d-1$, set $M^{(i)} := \oplus_{t \ge 0}I^{(dt+i)}$. 
Then $M^{(i)}$ is a finitely generated graded module over $S$.
By \cite[Theorem 1.1]{HH}, $\depth I^{(dt+i)}$ is a constant for $t \gg 0$.
Hence, $\depth R/I^{(dt+i)} = \depth I^{(dt+i)}-1$ is a constant for $t \gg 0$.
Therefore, $\depth R/I^{(t)}$ is a periodic function with period $d$ for $t \gg 0$.
\end{proof}

\begin{cor}  \label{monomial}
Let $I$ be an arbitrary monomial ideal in $R$.
Then $\depth R/I^{(t)}$ is an asymptotically periodic function.
\end{cor}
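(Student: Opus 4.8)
The plan is to deduce this immediately from Proposition \ref{Rees}, whose sole hypothesis is the finite generation of the symbolic Rees algebra. So the only point to verify is that $R_s(I) = \bigoplus_{t \ge 0} I^{(t)}$ is a finitely generated $k$-algebra whenever $I$ is a monomial ideal. This is a theorem of Herzog, Hibi and Trung \cite{HHT}, and I would simply invoke it. For orientation one can indicate why it holds: by Lemma \ref{decomposition} we have $I^{(t)} = \bigcap_{F \in \F(I)} I_F^t$, so $R_s(I)$ is the intersection, taken degreewise, of the finitely many ordinary Rees algebras $\bigoplus_{t \ge 0} I_F^t$ with $F \in \F(I)$; each $I_F$ is a monomial ideal whose Rees algebra is Noetherian, and the membership $x^{\mathbf{a}} \in I_F^t$ is governed by finitely many linear inequalities in the exponent vectors (cf.\ Lemma \ref{member}), so the intersection is again finitely generated.

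Granting the finite generation, Proposition \ref{Rees} applies verbatim and yields that $\depth R/I^{(t)}$ is periodic for $t \gg 0$, which is precisely the assertion of the corollary.

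The main --- indeed the only --- obstacle here is that the argument rests entirely on the external finite-generation result of \cite{HHT}; once that is in hand there is no further difficulty, and I would not reproduce its proof. It is worth noting in passing that this is exactly the point where the restriction to monomial ideals is essential: for a general homogeneous ideal $I$ the symbolic Rees algebra may fail to be Noetherian, and it is an open problem (mentioned in the introduction) whether $\depth R/I^{(t)}$ is asymptotically periodic without such a hypothesis.
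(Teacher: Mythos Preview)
Your proof is correct and follows exactly the paper's approach: invoke \cite[Theorem 3.2]{HHT} for the finite generation of $R_s(I)$ and then apply Proposition \ref{Rees}. The additional orientation you give on why the symbolic Rees algebra is finitely generated is not in the paper's proof, but it is accurate and harmless.
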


\begin{proof}
By \cite[Theorem 3.2]{HHT}, 
the symbolic Rees algebra $R_s(I) = \oplus_{t \ge 0}I^{(t)}$ is finitely generated.
Hence the conclusion follows from Proposition \ref{Rees}.
\end{proof}

Corollary \ref{monomial} is more or less known among experts.
However, there have been no known examples of symbolic depth functions that are not convergent.  
It was an open question whether the symbolic depth function of a monomial ideal is always convergent \cite[p.~308]{HKTT}.

At first, we obtain the following positive result. Let
$$F_s(I) := \bigoplus_{t \ge 0}I^{(t)}/\mm I^{(t)},$$
where $\mm$ is the maximal homogeneous ideal of $R$.
Note that $F_s(I)$ is the fiber ring of the symbolic Rees algebra $R_s(I)$.

\begin{thm} \label{constant}
Let $I$ be a monomial ideal in $R$ such that $I^{(t)}$ is integrally closed for $t \gg 0$.
Then $\depth R/I^{(t)}$ is a convergent function with
$$\lim_{t \to\infty} \depth R/I^{(t)} = \dim R - \dim F_s(I),$$
which is also the minimum of $\depth R/I^{(t)}$ among all integrally closed symbolic powers $I^{(t)}$.
\end{thm}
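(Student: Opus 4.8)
The plan is to combine the structural results of Section 2 with a stabilization argument. First I would observe that by Corollary \ref{monomial} the function $\depth R/I^{(t)}$ is asymptotically periodic, so it suffices to prove that this function is eventually non-increasing; combined with periodicity, an eventually non-increasing periodic function must be eventually constant. To get eventual monotonicity, I would apply Theorem \ref{decreasing}: choose $s \gg 0$ so that $I^{(s)}$ is integrally closed (possible by hypothesis); then part (ii) gives a constant $a$ with $\depth R/I^{(s)} \ge \depth R/I^{(t)}$ for all $t \ge as^2$. Running this for a suitable increasing sequence of integrally closed values $s$ (note $I^{(ds)}=(I^{(s)})^{(s)}\cdots$ via Lemma \ref{decomposition} stays integrally closed once $I^{(s)}$ is), part (i) of Theorem \ref{decreasing} pins down $\depth R/I^{(st)} \le \depth R/I^{(s)}$ along multiplicative subsequences, which together with periodicity forces the whole tail to be constant and equal to $\min\{\depth R/I^{(t)} \mid I^{(t)}\text{ integrally closed}\}$.

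Second, I would identify this limiting value with $\dim R - \dim F_s(I)$. The key point is that once $I^{(t)}$ is integrally closed and $t$ lies in the ``stable'' Veronese degree $d$ from the proof of Proposition \ref{Rees} (so that $S=\bigoplus_{u\ge 0}I^{(du)}$ is standard graded and each $M^{(i)}=\bigoplus_u I^{(du+i)}$ is a finitely generated graded $S$-module), the depth of $I^{(du+i)}$ over $R$ stabilizes. I would then use the standard fact, available for instance via local cohomology of the bigraded module, that for $u \gg 0$
$$\depth R/I^{(du+i)} = \dim R - \ell(I) \text{ for the appropriate analytic-spread-type quantity},$$
but more directly: the fiber ring $F_s(I) = \bigoplus_t I^{(t)}/\mm I^{(t)}$ governs the asymptotic behavior via a result relating $\dim F_s(I)$ to the eventual projective dimension — concretely, $\dim F_s(I)$ equals the number of variables appearing minimally, and one shows $\pd I^{(t)} = \dim F_s(I) - 1$ for $t \gg 0$ among integrally closed powers, whence $\depth R/I^{(t)} = \dim R - \dim F_s(I)$ by Auslander–Buchsbaum. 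I expect the cleanest route is to reduce, via the degree-complex machinery of Section 1 (Theorem \ref{Takayama}, Corollary \ref{depth}) applied to $I^{(t)}$ with $t=d\cdot u$, to a statement about the top nonvanishing reduced homology of degree complexes $\Delta_\a(I^{(du)})$, and to show these stabilize as $u\to\infty$ after rescaling $\a \mapsto u\a$, exactly as in the proof of Theorem \ref{decreasing} where $\nu^*_\a$ replaces $\nu_\a$ on integrally closed ideals.

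The main obstacle I anticipate is the precise identification of the stable value with $\dim F_s(I)$, rather than merely proving convergence (the latter follows fairly directly from Theorem \ref{decreasing} plus Corollary \ref{monomial}). One must show two inequalities: that $\depth R/I^{(t)} \le \dim R - \dim F_s(I)$ for every integrally closed symbolic power (this should come from exhibiting a nonzero local cohomology class built from a minimal generator realizing the dimension of the fiber ring, in the spirit of the Borsuk-nerve computation at the end of the proof of Theorem \ref{prop_example_increasing}), and that $\depth R/I^{(t)} \ge \dim R - \dim F_s(I)$ for $t \gg 0$ (this is the genuinely asymptotic direction, where one invokes the standard-graded Veronese $S$ and the finite $S$-module structure on $\bigoplus_u I^{(du+i)}/\mm I^{(du+i)}$, whose dimension is $\dim F_s(I)$, together with a depth-lower-bound of Grothendieck-type for the corresponding graded modules). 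Handling the mixed (non-unmixed) case and the congruence classes $i=0,\dots,d-1$ uniformly — ensuring the same value $\dim R-\dim F_s(I)$ appears in each residue class once $t$ is large and $I^{(t)}$ is integrally closed — is where care is needed; I would treat it by passing to $I^{(s)}$ for a fixed large integrally-closed $s$ and noting $(I^{(s)})^{(t)} = I^{(st)}$ reduces everything to the unmixed integrally-closed situation covered by Proposition \ref{integral} and Theorem \ref{decreasing}(i).
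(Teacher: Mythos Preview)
Your convergence argument is workable but more roundabout than necessary. The paper simply picks $s$ with $I^{(s)}$ integrally closed and $\depth R/I^{(s)}=m$ equal to the minimum over all integrally closed symbolic powers; then Theorem \ref{decreasing}(ii) gives $\depth R/I^{(t)}\le m$ for $t\ge as^2$, while the definition of $m$ gives $\depth R/I^{(t)}\ge m$ once $I^{(t)}$ is integrally closed. No periodicity or monotonicity along subsequences is needed. (Incidentally, your parenthetical ``$I^{(ds)}$ stays integrally closed once $I^{(s)}$ is'' does not follow from Lemma \ref{decomposition}; integral closedness of $I_F^s$ does not propagate to $I_F^{st}$ in general.)

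The genuine gap is the identification $\lim_t \depth R/I^{(t)}=\dim R-\dim F_s(I)$. None of your proposed mechanisms is correct or concrete: $\dim F_s(I)$ is certainly \emph{not} ``the number of variables appearing minimally''; asserting $\pd I^{(t)}=\dim F_s(I)-1$ is the conclusion, not an argument; and the degree-complex and ``Grothendieck-type'' bounds you invoke are not results available in the paper or in a usable form here. The paper's actual argument is of an entirely different nature. It passes to the filtration $\mathcal F=\{\overline{I^{(t)}}\}$ and shows that the Rees-type algebra $R(\mathcal F)=\bigoplus_t\overline{I^{(t)}}y^t$ is a finitely generated \emph{normal} affine semigroup ring (as a finite intersection of normalized ordinary Rees algebras), hence Cohen--Macaulay by Hochster's theorem. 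This forces the associated graded ring $G(\mathcal F)$ to be Cohen--Macaulay, so that $\grade \mm G(\mathcal F)=\height \mm G(\mathcal F)=\dim G(\mathcal F)-\dim G(\mathcal F)/\mm G(\mathcal F)$. A standard identity (\cite[Theorem 9.23]{BV}) then gives $\min_t \depth R/\overline{I^{(t)}}=\dim R-\dim G(\mathcal F)/\mm G(\mathcal F)$, and since $\overline{I^{(t)}}=I^{(t)}$ for $t\gg 0$ the fiber ring $G(\mathcal F)/\mm G(\mathcal F)$ and $F_s(I)$ share the same Hilbert quasi-polynomial, hence the same dimension. The Cohen--Macaulayness of $R(\mathcal F)$ is the key input you are missing; without it one cannot convert the grade into a dimension count, and your sketch has no substitute for this step.
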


\begin{proof}
Let $m$ be the minimum of $\depth R/I^{(t)}$ among all integrally closed symbolic powers $I^{(t)}$.
Choose an integrally closed symbolic power $I^{(s)}$ such that $\depth R/I^{(s)} = m$. 
By Theorem \ref{decreasing}(ii), there exists an integer $a$ such that $\depth R/I^{(s)}\ge \depth R/I^{(t)}$ for $t \ge  as^2$. This implies $\depth R/I^{(t)} = m$ for all integrally closed symbolic powers $I^{(t)}$ with $t \ge as^2$. Since $I^{(t)}$ is integrally closed for $t \gg 0$, we get $\depth R/I^{(t)} = m$ for $t \gg 0$. \par

We will show that $m = \min_{t \ge 1} \depth R/\overline{I^{(t)}}$.
Since $\overline{I^{(t)}} = I^{(t)}$ for $t \gg 0$, 
$$m \ge \min_{t \ge 1} \depth R/\overline{I^{(t)}}.$$
By Proposition \ref{integral}, we have
$$\depth R/\overline{I^{(s)}} \ge \depth R/\big(\overline{I^{(s)}}\big)^{(t)}$$ 
for all $s, t \ge 1$.
Using Lemma \ref{decomposition}, it is easy to check that 
$\overline{I^{(s)}} = \bigcap_{F \in \F(I)} \overline{I_F^s}.$
From this it follows that
$\big(\overline{I^{(s)}}\big)^{(t)} = \bigcap_{F \in \F(I)} \big(\overline{I_F^s}\big)^t.$
For $t \gg 0$, $I^{(st)}$ is integrally closed and so is $I_F^{st}$ for all $F \in \F(I)$ by Proposition \ref{closed}.
This implies 
$I_F^{st} \subseteq \big(\overline{I_F^s}\big)^t \subseteq \overline{I_F^{st}} = I_F^{st}.$
Hence, $\big(\overline{I_F^s}\big)^t = I_F^{st}$. So we get 
$$\big(\overline{I^{(s)}}\big)^{(t)} = \bigcap_{F \in \F(I)} I_F^{st} = I^{(st)}.$$
Therefore,
$$\depth R/\overline{I^{(s)}} \ge \depth R/ I^{(st)} \ge m$$
for all $s \ge 1$. Now, we can conclude that
$$m = \min_{t \ge 1} \depth R/\overline{I^{(t)}}.$$

It remains to show that $\min_{t \ge 1} \depth R/\overline{I^{(t)}} = \dim R - \dim F_s(I).$ For that we need the following auxiliary observation (cf. \cite[Proposition 2.5]{Va}).\par

Let $\F$ denote the filtration of the ideals $\overline{I^{(t)}}$, $t \ge 0$.
Let $R(\F) =  \bigoplus_{t \ge 0}\overline{I^{(t)}}y^t$.
Then $R(\F)$ is an algebra generated by monomials in $k[x_1,...,x_n,y]$.
Since $\overline{I^{(t)}} = \bigcap_{F \in \F(I)} \overline{I_F^t}$, we have
$$R(\F) =  \bigcap_{F \in \F(I)} \bigoplus_{t \ge 0}\overline{I_F^{t}}y^t.$$
For each $F \in \F(I)$, the algebra $\bigoplus_{t \ge 0}\overline{I_F^{t}}y^t$ is the normalization of the finitely generated algebra $\bigoplus_{t \ge 0}I_F^{t}y^t$.
Hence, $\bigoplus_{t \ge 0}\overline{I_F^{t}}y^t$ is a finitely generated algebra. 
The monomials of $\bigoplus_{t \ge 0}\overline{I_F^{t}}y^t$ form a finitely generated semigroup. 
Since the semigroup of the monomials of $R(\F)$ is the intersections of these semigroups, it is also finitely generated \cite[Corollary 1.2]{HHT}. 
From this it follows that $R(\F)$ is a finitely generated algebra. 
Moreover, as an intersection of normal rings, $R(\F)$ is a normal ring. 
By \cite[Theorem 1]{Ho}, this implies that $R(\F)$ is Cohen-Macaulay.
\par

Let $G(\F) =  \bigoplus_{t \ge 0}\overline{I^{(t)}}/\overline{I^{(t+1)}}$. 
Then $G(\F)$ is a factor ring of $R(\F)$ by the ideal  $\bigoplus_{t \ge 0}\overline{I^{(t+1)}}y^t$.
Hence, $G(\F)$ is a finitely generated algebra. By \cite[Theorem 4.5.6(b)]{BH}, we have $\dim G(\F) = \dim R$.
By the proof of the necessary part of \cite[Theorem 1.1]{TI},  
the Cohen-Macaulayness of $R(\F)$ implies that of  $G(\F)$. 
By \cite[Theorem 9.23]{BV}, these facts imply
\begin{align*}
\min_{t \ge 1} \depth R/\overline{I^{(t)}} & = \grade \mm G(\F) = \height \mm G(\F) \\
& =  \dim G(\F) - \dim G(\F)/\mm G(\F)\\
& = \dim R - \dim G(\F)/\mm G(\F).
\end{align*}
We have $G(\F)/\mm G(\F) = \bigoplus_{t \ge 0}\overline{I^{(t)}}/\mm\overline{I^{(t+1)}}$.
Since $F_s(I) = \bigoplus_{t \ge 0} I^{(t)}/\mm I^{(t+1)}$ and $\overline{I^{(t)}} = I^{(t)}$ for $t \gg 0$, the graded algebras $G(\F)/\mm G(\F)$ and $F_s(I)$ share the same Hilbert quasi-polynomial \cite[Theorem 4.4.3]{BH}. From this it follows that
$\dim G(\F)/\mm G(\F) = \dim F_s(I)$. Therefore,
$$\min_{t \ge 1} \depth R/\overline{I^{(t)}} = \dim R -\dim F_s(I).$$
\end{proof}

The convergence of the function $\depth R/I^{(t)}$ in Theorem \ref{constant} can be also deduced from \cite[Theorem 4.7]{HT1}, which estimates the Castelnuovo-Mumford regularity of integral closures of ideals of the form $I_1^t \cap \cdots \cap I_p^t$, where $I_1,...,I_p$ are monomial ideals. 
However, our proof gives more information on $\lim_{t\to\infty} \depth R/I^{(t)}$ and it is more transparent.

\begin{cor} \label{normal}  
Let $I$ be a monomial ideal in $R$ such that $I^{(t)}$ is integrally closed for all $t \ge 1$.
Then $\depth R/I^{(t)}$ is a convergent function with
$$\lim_{t \to\infty} \depth R/I^{(t)} = \min_{t \ge 1} \depth R/I^{(t)} = \dim R - \dim F_s(I).$$
\end{cor}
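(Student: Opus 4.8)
The plan is to reduce the statement directly to Theorem \ref{constant}, which does essentially all the work; the only additional content is checking that when $I^{(t)}$ is integrally closed for \emph{all} $t \ge 1$ (not merely for $t \gg 0$), the minimum over integrally closed symbolic powers is actually the minimum over all $t \ge 1$, and that it is attained asymptotically.

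First I would invoke Theorem \ref{constant}: its hypothesis ($I^{(t)}$ integrally closed for $t \gg 0$) is implied by the hypothesis of the corollary, so $\depth R/I^{(t)}$ is convergent with
$$
\lim_{t \to \infty} \depth R/I^{(t)} \;=\; \dim R - \dim F_s(I),
$$
and this common value equals the minimum of $\depth R/I^{(t)}$ taken over all integrally closed symbolic powers. Now, since by assumption every symbolic power $I^{(t)}$, $t \ge 1$, is integrally closed, the phrase ``among all integrally closed symbolic powers'' becomes simply ``among all $t \ge 1$.'' Hence
$$
\lim_{t \to \infty} \depth R/I^{(t)} \;=\; \min_{t \ge 1} \depth R/I^{(t)} \;=\; \dim R - \dim F_s(I),
$$
which is exactly the claimed chain of equalities.

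There is essentially no obstacle here: the corollary is a clean specialization, and the substantive input — the existence of the limit, the formula in terms of $\dim F_s(I)$, and the identification with the minimum over integrally closed powers — is supplied verbatim by Theorem \ref{constant}. The one point worth a sentence of care is that the set of $t$ with $I^{(t)}$ integrally closed is now all of $\NN_{\ge 1}$, so no gap can occur between ``minimum over integrally closed powers'' and ``minimum over all powers''; with that observation the proof is immediate.
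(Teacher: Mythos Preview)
Your proposal is correct and follows exactly the paper's approach: the paper's proof is the single line ``This is straightforward from Theorem \ref{constant},'' and you have spelled out precisely that straightforwardness. The only observation needed beyond invoking Theorem \ref{constant} is that, under the corollary's hypothesis, the set of integrally closed symbolic powers is all of $\{t \ge 1\}$, which you identify correctly.
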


\begin{proof}
This is straightforward from Theorem \ref{constant}.
\end{proof}

We have seen in the proof of Corollary \ref{squarefree} that $I^{(t)}$ is integrally closed for all $t \ge 1$ if $I$ a squarefree monomial ideal $I$. If we apply Corollary \ref{normal} to this case, we obtain the main part of \cite[Theorem 2.4]{HKTT}. 
\par

Now, we will present a large class of monomial ideals $I$ with the property that $I^{(t)}$ are integrally closed for $t \gg 0$.
This follows from the following observation.

\begin{lem} \label{same degree}
Let $Q$ be a primary ideal in $R$ generated by monomials of the same degree $d$.
Let $F \subseteq [n]$ such that $P_F$ is the associated prime of $Q$.
Then $Q^t$ is integrally closed for $t \gg 0$ if and only if $Q$ contains the monomials $x_i^{d-1}x_j$ for all $i, j \not\in F$.
\end{lem}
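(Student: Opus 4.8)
The plan is to characterize when $Q^t$ fails to be integrally closed by a direct monomial analysis, reducing the problem to the polyhedral description of membership that the paper has already set up. Since $Q$ is $P_F$-primary and generated by monomials of degree $d$, I would first pass to the polynomial ring $S = k[x_i \mid i \notin F]$: a monomial $x^\a$ lies in $Q$ (resp.\ $\overline{Q^t}$) if and only if its image in $S$ does, because the variables $x_i$ with $i \in F$ play no role in a $P_F$-primary ideal. So without loss of generality I may assume $Q$ is $\mm$-primary in $S$, generated by monomials of degree $d$. Then $\overline{Q} = \overline{\mm^d}\cap Q_{\text{something}}$ — more usefully, I would use Lemma \ref{closure}: writing the generators of $Q$ as $x^{\a_1},\dots,x^{\a_m}$ with $|\a_j| = d$, one has $x^\a \in \overline{Q^t}$ iff $\nu^*_\a(Q) \ge t$, and $x^\a \in Q^t$ iff $\nu_\a(Q) \ge t$. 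The key elementary point is that $\nu^*_\a(Q) = |\a|/d$ is forced when $\a$ is "large enough in all coordinates," because one can take convex combinations; the integrality defect $\nu^*_\a(Q) - \nu_\a(Q)$ is what obstructs integral closedness.

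Next I would prove the two implications. For the "only if" direction: suppose $x_i^{d-1}x_j \notin Q$ for some $i,j \notin F$ (note $i \ne j$ is possible, or $i = j$ giving $x_i^d$, but $x_i^d \in Q$ would follow from $Q$ being primary to $\mm$ with... actually $x_i^d$ need not be in $Q$, so I should keep $i,j$ ranging over not-necessarily-distinct indices, interpreting $x_i^{d-1}x_i = x_i^d$). I would then exhibit, for every $t$, a monomial $x^\a$ with $|\a| = dt$ lying in $\overline{Q^t}$ but not in $Q^t$: take $\a$ concentrated so that $x^\a$ is close to a high power of $x_i^{d-1}x_j$, say $\a = t\bigl((d-1)\e_i + \e_j\bigr)$ suitably padded, or more robustly $\a$ a large generic integer point on the ray through $(d-1)\e_i + \e_j$; then $\nu^*_\a(Q) = |\a|/d = t$ so $x^\a \in \overline{Q^t}$, while a combinatorial argument using that no degree-$d$ generator "fits inside" the profile of $x_i^{d-1}x_j$ shows $\nu_\a(Q) < t$. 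This requires care: I need that the only way to write $dt$ as a sum of generator-exponents componentwise below $\a$ forces using $x_i^{d-1}x_j$ or $x_i^d$, which are excluded. For the "if" direction: assume $x_i^{d-1}x_j \in Q$ for all $i,j\notin F$. I would show $Q \supseteq \mm^{d} \cdot(\text{adjustment})$ — more precisely that $Q$ contains every monomial of degree $d$ that is not a pure power $x_i^d$ with multiplicity issues; combined with the hypothesis this should give $\overline{Q} = Q$ and, crucially, that $\overline{Q^t} = Q^t$ for $t \gg 0$ by an argument that for $\a$ with $\nu^*_\a(Q) \ge t$ and all coordinates of $\a$ bounded below by $t$ (which holds once $|\a| = dt$ and... no, not automatically), one can actually realize the value $t$ integrally. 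The honest route here is: show $R(\F)$-type normality, i.e.\ that the Rees-type algebra $\bigoplus_t Q^t y^t$ agrees with its normalization in all large degrees, which is equivalent to the stated monomial condition.

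The main obstacle I expect is the "if" direction, specifically controlling $\overline{Q^t}$ for all large $t$ simultaneously rather than just $\overline{Q} = Q$. Having $Q$ integrally closed does not imply all powers are; what the hypothesis $x_i^{d-1}x_j \in Q$ buys is that the integral closure of the Rees algebra is generated in degree one over $Q$, which forces $\overline{Q^t} = Q^t$ for $t$ large (the normalization is a finitely generated module, and the degree-one piece already generates it up to a bounded degree shift). I would formalize this via the polyhedral picture: $\overline{Q^t}$ corresponds to lattice points in $t \cdot \conv(\a_1,\dots,\a_m) + \RR_+^n$, and $Q^t$ to sums of $t$ generator-points plus $\NN^n$; the condition $x_i^{d-1}x_j \in Q$ for all $i,j$ says the points $(d-1)\e_i+\e_j$ are among the generators, which makes the semigroup generated by $\{\a_\ell\}$ "saturated in the relevant cone" from degree $d$ onward, so the two sets agree for $t \gg 0$. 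Pinning down the exact combinatorial statement and the threshold is the technical heart; the "only if" direction is the easier, more computational half.
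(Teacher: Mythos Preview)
Your overall architecture is right: reduce to the $\mm$-primary case in $S=k[x_i\mid i\notin F]$, then argue each direction via the Newton/integral-closure description. But both halves need sharpening, and in the ``if'' direction you are missing the decisive idea.

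For the ``only if'' direction, your candidate monomial $x^\a$ with $\a=t\bigl((d-1)\e_i+\e_j\bigr)$ does not work in general: when $d\mid t$ one can write $x_i^{t(d-1)}x_j^{t}=(x_i^d)^{t(d-1)/d}(x_j^d)^{t/d}\in Q^t$, since $x_i^d,x_j^d\in Q$ (your aside ``$x_i^d$ need not be in $Q$'' is incorrect: $Q$ is $P_F$-primary, so some $x_i^N\in Q$, and any degree-$d$ generator dividing $x_i^N$ must be $x_i^d$). The paper instead uses $x_i^{td-1}x_j$. This lies in $\overline{Q^t}$ because the Newton polyhedron of $Q^t$ contains every lattice point of total degree $dt$; and if it lies in $Q^t$ then, since it has degree $dt$, it factors as a product of $t$ degree-$d$ generators supported on $\{x_i,x_j\}$, which forces one factor to be $x_i^{d-1}x_j$.

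For the ``if'' direction, your plan via finite generation of the normalization of the Rees algebra does not close the gap: module-finiteness of $\bigoplus_t\overline{Q^t}$ over $\bigoplus_t Q^t$ gives $\overline{Q^t}=\sum_j Q^{t-t_j}g_j$ for fixed $g_j\in\overline{Q^{t_j}}$, but that is not $\overline{Q^t}=Q^t$ unless you already know each $g_j\in Q^{t_j}$. The paper's key idea, which you do not anticipate, is to prove the much stronger equality $Q^t=P_F^{dt}$ for all $t\ge n(d-1)$; since $\overline{Q^t}\subseteq\overline{P_F^{dt}}=P_F^{dt}$, this immediately gives integral closedness. The equality is shown by an explicit combinatorial decomposition: for any $\a\in\NN^n$ with $|\a|=dt$ and $\supp(\a)\subseteq[n]\setminus F$, one finds $\b,\c\in\NN^n$ with $\a=(d-1)\b+\c$ and $|\b|=|\c|$, starting from the division-with-remainder expression $a_i=(d-1)b_i+c_i$ and repeatedly trading one unit of $\b$ for $d-1$ units of $\c$ until the sizes match. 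Then $|\b|=|\c|=t$ and $x^\a$ is a product of $t$ monomials of the form $x_i^{d-1}x_j\in Q$. This concrete step is what your polyhedral sketch (``saturated in the relevant cone from degree $d$ onward'') is gesturing toward but does not supply.
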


\begin{proof}
We may assume that $d \ge 2$.
Since $Q$ is a $P_F$-primary ideal generated by forms of degree $d$, $x_i^d \in Q$ for all $i \not\in F$. 
This implies $x_i^{dt} \in Q^t$ for $t \ge 1$.
Hence, the convex polyhedron $N(Q^t) := \conv\{\a \in \NN^n\mid x^\a  \in Q^t\}$ contains all integral points $\a \in \NN^n$ with $|\a| = dt$. 
If $Q^t$ is integrally closed, $x^\a \in Q^t$ for all such $\a$ (see e.g. \cite[\textsection 1.4]{HS}).
In particular, $x_i^{td-1}x_j \in Q^t$ for all $i, j \not\in F$. 
This condition is satisfied only if $Q$ contain $x_i^{d-1}x_j$. \par

If $Q$ contains the monomials $x_i^{d-1}x_j$ for all $i, j \not\in F$, 
we will show that $Q^t = P_F^{dt}$ for $t \ge n(d-1)$. 
Note that $P_F^{dt}$ is generated by the monomials of degree $td$ in the variables $x_i$, $i \not\in F$.
Let $x^\a$ be such a monomial. Note that $|\a| = dt$ and $a_i = 0$ for all $i \in F$.
We will show that there exist $\b, \c \in \NN^n$ such that $\a = (d-1)\b + \c$ with $|\b| = |\c|$. 
 \par

We will start from some expressions $\a = (d-1)\b + \c$ such that $|\b| \ge |\c|$.
For instance, we can always write $a_i = (d-1)b_i + c_i$ with $0 \le c_i < d-1$.
Since $|\c| \le n(d-2)$ and $t \ge n(d-1)$, we have
\begin{align*}
|\b| & = (|\a| - |\c|)/(d-1) \ge (td - n(d-2))/(d-1) \\
& \ge (n(d-2)d - n(d-2))/(d-1) = n(d-2) \ge |\c|.
\end{align*}
It suffices to show that if $|\b| > |\c|$, we can find other expressions $a_i = b_i'(d-1) + c_i'$ with $b_i', c_i' \ge 0$ such that $|\b'| \ge |\c'|$ and $|\b'|-|\c'| < |\b|-|\c|$. \par

Note that $|\b|-|\c| = |\a| - d|\b| = d(t - |\b|)$. If $|\b| > |\c|$, then $t - |\b| > 0$. Hence, $|\b| \ge |\c| + d$. 
Choose $j$ such that $b_j > 0$. Set $b_j' = b_j - 1$, $c'_j = c_j + d-1$, and $b_i' = b_i$, $c_i' = c_i$ for $i \neq j$.
Then $a_i = (d-1)b_i' + c_i'$ for all $i$. Hence, $\a = (d-1)\b' + \c'$. Since $|\b'| = |\b| - 1$ and $\c'  = |\c| + d-1$, we also have $|\b'| \ge |\c'|$ and $|\b'|-|\c'| < |\b| - |\c|$.

Once we have found $\b, \c \in \NN^n$ such that $\a = (d-1)\b + \c$ with $|\b| = |\c|$, we rewrite $(d-1)\b + \c$ as a sum of $|\b|$ vectors of the form $(d-1)\e_i + \e_j$. Since $|\a| = dt = (d-1)|\b| +|\c|$, we must have $|\b| = t$. Therefore, we can write $x^\a$ as a product of $t$ monomials of the form $x_i^{d-1}x_j$. Since $a_i = 0$ for $i \in F$, we also have $b_i = c_i = 0$ for $i \in F$. From this it follows that these monomials $x_i^{d-1}x_j$ belong to $Q$, which implies $x^\a \in Q^t$. 
So we can conclude that $Q^t = P_F^{dt}$, which is integrally closed.
\end{proof}

\begin{prop} \label{primary}
Let $I$ be a monomial ideal such that for all $F \in \F(I)$, the primary component $I_F$ is generated by forms of the same degree. Then $I^{(t)}$ is integrally closed for $t \gg 0$ if and only if each ideal $I_F$ contains the monomials $x_i^{d-1}x_j$ for all $i, j \not\in F$, where $d$ is the generating degree of $I_F$.
\end{prop}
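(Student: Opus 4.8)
The goal is Proposition~\ref{primary}, which must follow from Lemma~\ref{same degree} together with the structural results on integral closures of symbolic powers of monomial ideals developed earlier (Lemma~\ref{decomposition} and Proposition~\ref{closed}). The plan is to reduce the statement about $I^{(t)}$ directly to the statements about the individual primary components $I_F^t$ and then quote Lemma~\ref{same degree} componentwise.

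First I would invoke Proposition~\ref{closed}: $I^{(t)}$ is integrally closed if and only if $I_F^t$ is integrally closed for every $F \in \F(I)$. Hence ``$I^{(t)}$ integrally closed for $t \gg 0$'' holds if and only if, for each $F \in \F(I)$, the power $I_F^t$ is integrally closed for $t \gg 0$. Here one should be slightly careful about the quantifier order: \emph{a priori} the threshold $t_0(F)$ beyond which $I_F^t$ is integrally closed could depend on $F$, but since $\F(I)$ is a finite set one may simply take the maximum of the finitely many thresholds, so the ``for $t \gg 0$'' version of the equivalence is legitimate in both directions. With this in hand, the proposition is reduced to a purely local statement about each $I_F$.

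Next, for each fixed $F \in \F(I)$, the ideal $I_F$ is by hypothesis a $P_F$-primary monomial ideal generated by forms of a single degree $d = d(F)$, which is exactly the setup of Lemma~\ref{same degree} (with the role of the subset being $F$ itself). Applying that lemma, $I_F^t$ is integrally closed for $t \gg 0$ if and only if $I_F$ contains the monomials $x_i^{d-1}x_j$ for all $i,j \notin F$. Combining this with the equivalence from the previous paragraph yields precisely the asserted characterization: $I^{(t)}$ is integrally closed for $t \gg 0$ if and only if each $I_F$ contains $x_i^{d-1}x_j$ for all $i, j \notin F$, where $d$ is the generating degree of $I_F$.

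There is essentially no substantive obstacle here; the proposition is a direct globalization of Lemma~\ref{same degree}, and all the real work is in that lemma (the polyhedral argument showing $Q^t = P_F^{dt}$ for $t \ge n(d-1)$) and in Proposition~\ref{closed} (which packages the intersection decomposition of Lemma~\ref{decomposition}). The only point demanding a line of care in writing is the interchange of the ``for all $F$'' and ``for $t \gg 0$'' quantifiers, which is handled by the finiteness of $\F(I)$ as noted above. So the write-up can be kept to a few sentences: cite Proposition~\ref{closed}, reduce to the finitely many components, apply Lemma~\ref{same degree} to each, and conclude.
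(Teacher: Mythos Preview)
Your proposal is correct and follows essentially the same approach as the paper: invoke Proposition~\ref{closed} to reduce the integral closedness of $I^{(t)}$ to that of each $I_F^t$, then apply Lemma~\ref{same degree} componentwise. The paper's proof is terser and leaves the quantifier interchange implicit, while you make the finiteness of $\F(I)$ explicit; otherwise the arguments are identical.
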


\begin{proof}
By Proposition \ref{closed}, $I^{(t)}$ is integrally closed if and only if $I_F^t$ is integrally closed for all $F \in \F(I)$.
By Lemma \ref{same degree}, $I_F^t$ is integrally closed for $t \gg 0$ if and only if $I_F$ contain the monomials $x_i^{d-1}x_j$ for all $i, j \not\in F$.
\end{proof}


\section{Basic symbolic depth functions}

The aim of this section is to exhibit symbolic depth functions which will be used to build up arbitrary
asymptotically periodic symbolic depth functions. 
\medskip

First, we present a class of monomial ideals $I$ with $\depth R/I^{(t)} = 1, 2$ for all $t \ge 1$, for which we are able to check when $\depth R/I^{(t)} = 1$ or $\depth R/I^{(t)} = 2$.

\begin{prop}
\label{lem_key}
Let $R=k[x,y,z,u,v]$ be a polynomial ring. Let $M,P,Q$ be primary monomial ideals of $R$ such that 
\begin{align*}
\sqrt{M} & =(x,y,z),\\
\sqrt{P} & =(x,y),\\
\sqrt{Q} & =(z).
\end{align*}
Let $I=M\cap (P,u) \cap (Q,v)$.  For all $t \ge 1$, \par
{\rm (i)} $\depth R/I^{(t)} \le 2$,\par
{\rm (ii)} $\depth R/I^{(t)}=2$ if and only if $M^t \subseteq P^t + Q^t.$
\end{prop}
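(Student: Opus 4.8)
The plan is to compute the symbolic powers of $I$ explicitly via Lemma \ref{decomposition}, then feed the resulting intersection into Lemma \ref{lem_depth_intersectionandsum} (for the upper bound in (i) and half of (ii)) and into the degree-complex machinery of Section 1 (for the sharpness). First I would record that $\F(I)$ corresponds to the minimal primes $(x,y,z)$, $(x,y,u)$, $(z,v)$, so that by Lemma \ref{decomposition}
\[
I^{(t)} = M^t \cap (P,u)^{(t)} \cap (Q,v)^{(t)} = M^t \cap (P^t, u^t?) \ldots
\]
— more precisely, since $(P,u)$ is $(x,y,u)$-primary and $u$ is a variable not occurring in $P$, one checks $(P,u)^{(t)} = \bigcap_{j} (P^{(j)}, u^{t-j})$-type decompositions collapse to a clean form; I would verify that $(P,u)^t$ is already the $P_F$-primary component, hence $(P,u)^{(t)}=(P,u)^t$ and similarly $(Q,v)^{(t)}=(Q,v)^t$, while $M^t = M^{(t)}$ since $M$ is $\mm_{xyz}$-primary. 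Thus $I^{(t)} = M^t \cap (P,u)^t \cap (Q,v)^t$.

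For part (i), set $J_1 = M^t$, $J_2 = (P,u)^t$, $J_3 = (Q,v)^t$ and estimate the depths of the pairwise and triple sums. The point is that $J_1 + J_2$, $J_1 + J_3$, $J_2 + J_3$, and $J_1+J_2+J_3$ all have small depth: e.g. $J_1 + J_2 \supseteq M^t + u^t$ lives in few variables after noting $x,y,z,u$ are "used up", so its depth is at most $1$ (in fact $\sqrt{J_1+J_2} \ni x,y,z,u$, leaving only $v$ free after accounting, giving $\depth \le 1$). Running Lemma \ref{lem_depth_intersectionandsum} with $s=3$ the other way is not what I want here; instead I would directly use the Mayer–Vietoris / short exact sequence
\[
0 \to \frac{R}{J_1 \cap J_2 \cap J_3} \to \frac{R}{J_1\cap J_2} \oplus \frac{R}{J_3} \to \frac{R}{(J_1\cap J_2)+J_3} \to 0,
\]
iterated once more, to bound $\depth R/I^{(t)}$ above by $\max$ of ($\depth$ of pairwise/triple sums) $+\,1$, all of which I expect to be $\le 1$, yielding $\depth R/I^{(t)} \le 2$. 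Equivalently, and perhaps more cleanly, one shows $H^i_\mm(R/I^{(t)}) \neq 0$ for some $i \le 2$ by exhibiting a nonzero local cohomology class via Corollary \ref{depth}: produce an $\a$ with $|G_\a|+j \le 2$ and $\widetilde H_{j-1}(\Delta_\a(I^{(t)}),k)\neq 0$. Since $I$ has three minimal primes arranged so that $M$ "overlaps" both $(P,u)$ and $(Q,v)$ but the latter two are disjoint, a suitable $\a$ should make $\Delta_\a(I^{(t)})$ disconnected, giving $\widetilde H_0 \neq 0$ and hence $\depth R/I^{(t)} \le 1$ or $\le 2$ depending on $G_\a$; this is exactly where the dichotomy in (ii) enters.

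For part (ii), the criterion $\depth R/I^{(t)} = 2$ versus $=1$ is governed by Proposition \ref{depth2} together with the computation of which degree complexes $\Delta_\a(I^{(t)})$ are connected. The key reduction: $\depth R/I^{(t)} \ge 2$ iff (conditions (i)–(iii) of Proposition \ref{depth2} hold), and the only obstruction among these will turn out to be condition (iii) — connectedness of $\Delta_\a(I^{(t)})$ for $\a \in \NN^5$ — which fails exactly when there is a monomial $x^\a \in Q^t$ (equivalently $x^{\a} \in (Q,v)^t$ witnessed in the $x,y,z$ variables) that avoids $M^t$ and avoids $P^t$, splitting the complex into the $u$-component and the $v$-component. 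Unwinding "$x^\a$ separates the $(P,u)$-facet from the $(Q,v)$-facet while lying outside $M^t$" via Proposition \ref{unmixed} gives precisely: such a separating $\a$ exists iff $M^t \not\subseteq P^t + Q^t$. So I would prove: (a) if $M^t \subseteq P^t+Q^t$, every relevant $\Delta_\a(I^{(t)})$ is connected and conditions (i),(ii) of Prop. \ref{depth2} also hold (the latter being a routine check that localizing at each variable kills the $\mm$-primary behavior), hence $\depth \ge 2$, so $=2$ by part (i); (b) if $M^t \not\subseteq P^t+Q^t$, pick a monomial $x^\a \in M^t$-complement... wait, rather $x^\b \in Q^t \setminus (M^t + P^t)$ after adjusting, build the explicit $\a \in \NN^5$ with $x^{\a_+}$ supported on $x,y,z$ lying in no $I_F$ except forcing disconnectedness, and conclude $\widetilde H_0(\Delta_\a(I^{(t)}),k) \neq 0$ with $G_\a = \emptyset$, so $\depth R/I^{(t)} \le 1$, hence $=1$.

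The main obstacle I anticipate is step (ii)(a): verifying that $M^t \subseteq P^t+Q^t$ forces connectedness of \emph{every} $\Delta_\a(I^{(t)})$ with $\a \in \NN^5$, not just the "obvious" ones, and simultaneously that the lower-dimensional local cohomologies $H^0, H^1$ truly vanish — i.e. nailing down conditions (i) and (ii) of Proposition \ref{depth2}. This requires a careful case analysis on which of the variables $x,y,z,u,v$ appear in $\a$ with positive versus negative coordinates, using the explicit facet description from Proposition \ref{unmixed} ($F\setminus G_\a$ a facet iff $P_F$ minimal, $G_\a\subseteq F$, $x^{\a_+}\notin I_F$) applied to the three minimal primes. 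The saving grace is that there are only three minimal primes with a very rigid overlap pattern, so the nerve of $\Delta_\a(I^{(t)})$ is a subcomplex of the boundary of a triangle, and connectedness amounts to checking that one cannot have exactly the two disjoint facets (from $(P,u)$ and $(Q,v)$) present with the $M$-facet absent — which is the translation of $M^t \subseteq P^t + Q^t$. I would lean on Borsuk's nerve theorem (as in the proof of Theorem \ref{prop_example_increasing}) to make the homology computation transparent once the combinatorics of which facets appear is pinned down.
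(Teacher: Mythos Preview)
Your overall architecture for part (ii) --- use Proposition~\ref{depth2} to reduce $\depth\ge 2$ to connectedness of the degree complexes $\Delta_\a(I^{(t)})$ for $\a\in\NN^5$, then analyze which of the three facets in $\F(I^{(t)})$ appear via Proposition~\ref{unmixed} --- is exactly the paper's approach. But two concrete points need repair.

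\textbf{Part (i).} Your Mayer--Vietoris idea does not give an upper bound on depth: the depth lemma applied to $0\to R/(A\cap B)\to R/A\oplus R/B\to R/(A+B)\to 0$ yields a \emph{lower} bound on $\depth R/(A\cap B)$, not an upper one. The paper's argument is a one-liner you are missing: once $I^{(t)}=M^t\cap(P,u)^t\cap(Q,v)^t$ is established, the prime $(x,y,z)=\sqrt{M}$ is an associated prime of $I^{(t)}$, so $\depth R/I^{(t)}\le \dim R/(x,y,z)=2$ by \cite[Proposition~1.2.13]{BH}. No exact sequences or degree complexes are needed here.

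\textbf{Part (ii).} You have the membership conditions inverted. Write the facets of $\F(I^{(t)})$ as $\{u,v\}$ (component $M^t$), $\{z,v\}$ (component $(P,u)^t$), and $\{x,y,u\}$ (component $(Q,v)^t$). The only disjoint pair is $\{z,v\}$ and $\{x,y,u\}$; the facet $\{u,v\}$ meets both. Disconnectedness therefore means: $\{u,v\}$ is \emph{absent} while the other two are present. By Proposition~\ref{unmixed}, $F$ is a facet iff $x^{\a}\notin I_F$, so disconnectedness requires
\[
x^\a\in M^t,\qquad x^\a\notin (P,u)^t,\qquad x^\a\notin (Q,v)^t.
\]
Your proposal has $x^\a\in Q^t$ with $x^\a\notin M^t$, $x^\a\notin P^t$; with that choice the surviving facets are $\{u,v\}$ and $\{z,v\}$, which share $v$ and give a \emph{connected} complex. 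After correcting the direction, the reduction to monomials in $x,y,z$ (since generators of $M^t$ involve no $u,v$) shows that such an $\a$ exists iff some generator of $M^t$ lies outside $P^t\cup Q^t$, i.e.\ iff $M^t\not\subseteq P^t+Q^t$, which is the paper's conclusion. The nerve/Borsuk step is unnecessary here: with only three facets the connectedness analysis is done by inspection.
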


\begin{proof}
(i)  We always have $\depth R/I^{(t)} \ge 1$.
Hence, it suffices to show that $\depth R/I^{(t)} \le  2$. 
By Lemma \ref{primary} we have the primary decomposition
$$I^{(t)}=M^t \cap (P,u)^t \cap (Q,v)^t.$$
From this it follows that $\depth R/I^{(t)} \le \dim R/M^t = 2$ \cite[Proposition 1.2.13]{BH}. \par

(ii)  It suffices to show that $\depth R/I^{(t)} \ge 2$ if and only if $M^t \subseteq P^t+Q^t$.
It is easy to see that $I^{(t)}$ satisfies the conditions (i) and (ii) of Proposition \ref{depth2}.
Therefore, we only need to show that for $\a \in \NN^5$, 
the degree complex $\D_\a(I^{(t)})$ is connected if and only if $M^t \subseteq P^t+Q^t$. 
Note that $G_\a = \emptyset$ and $\a^+ = \a$ for $\a \in \NN^5$. 
\par

For convenience, we consider the sets of $\F(I^{(t)})$ and $\D_\a(I^{(t)})$ as sets of variables of $R$.
Then
$$\F(I^{(t)}) =  \big\{\{u,v\},\{z,v\},\{x,y,u\}\big\}.$$
By Proposition \ref{unmixed}, each facet of $\D_\a(I^{(t)})$ belongs to $\F(I^{(t)})$.
Therefore, $\D_\a(I^{(t)})$ is disconnected if and only if 
$$
\D_\a(I^{(t)}) = \left<\{z,v\}, \{x,y,u\}\right>.
$$

Let $x^\a$ denote the monomial of $R$ having the exponent vector $\a$ (the order of the variables is $x,y,z,u,v$).
By Proposition \ref{unmixed}, $\{u,v\}$ or $\{z,v\}$ or $\{x,y,u\}$ is a facet of
$\D_\a(I^{(t)})$ if and only if $x^\a \not\in M^t$ or $x^\a \not\in (P,u)^t$ or $x^\a \not\in (Q,v)^t$, respectively. 
Therefore, $\D_\a(I^{(t)}) = \left<\{z,v\}, \{x,y,u\}\right>$ if and only if $x^\a \in M^t$, $x^\a \not\in (P,u)^t$, $x^\a \not\in (Q,u)^t$.
This condition is satisfied for some $\a \in \NN^5$ if and only if there exists a monomial generator of $M^t$ which is not contained in $(P,u)^t \cup (Q,v)^t$.
Since $u,v$ do not appear in the minimal monomial generators of $M^t$, this is equivalent to the condition
that there exists a minimal monomial generator of $M^t$ which is 
not contained in $P^t \cup Q^t$.
As we are dealing with monomial ideals, the last condition is satisfied 
 if and only if $M^t \not\subseteq  P^t + Q^t$.
So we can conclude that $\depth R/I^{(t)} \ge 2$ if and only if $M^t \subseteq P^t + Q^t$.
\end{proof}

It is clear that $M^t \not\subseteq P^t + Q^t$ if and only if every monomial generator of $M^t$ is not divisible by any monomial generator of $P^t$ and $Q^t$. To check this condition amounts to solving a system of linear diophantine inequalities. The solvability of such a system depends very much on $t$. Therefore, we can use
Proposition \ref{lem_key} to construct symbolic depth functions with irregular behavior. \par

For convenience, we identify a numerical function $\phi(t)$ with the sequence 
$$\phi(1), \phi(2),\phi(3), ...\ .$$
Now we are going to construct symbolic depth functions of the following types.
\par

\begin{enumerate}
\item[{\bf  A}:] 1,...,1,2,2,... , which is a monotone function converging to 2,
\item[{\bf  B}:] 1,...,1,2,1,1,... , which has the value 2 at only one position, 
\item[{\bf  C}:] 1,1,1,...  or 1,..,1,2,1,..,1,1,..,1,2,1,..,1,... , which is a periodic function with a period of the form 1,..,1,2,1,..,1, where 2 can be at any position. 
\end{enumerate}

Note that type {\bf A} shows the existence of monomial ideals whose symbolic depth function is convergent with 
$$\lim_{t \to\infty} \depth R/I^{(t)} \neq \min_{t \ge 1} \depth R/I^{(t)},$$ 
that is unlike the formula for squarefree monomial ideals in Corollary \ref{normal}. 
Type {\bf C} shows the existence of monomial ideals whose symbolic depth function is not convergent. \par

For type {\bf A} we have the following class of ideals.

\begin{lem} \label{type1}
Let $m\ge 2$ be an integer. Let $R=k[x,y,z,u,v]$ and 
\[
I=(x^{2m-2},y^m,z^{2m})^2 \cap (x^{2m-1},y^{2m-1},u) \cap (z,v).  
\]
Then 
$$
\depth R/I^{(t)} = 
\begin{cases}
1 &\text{if $t\le m-1$},\\
2 &\text{if $t\ge m$}.
\end{cases}
$$
\end{lem}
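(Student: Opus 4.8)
The strategy is to apply Proposition \ref{lem_key} with $M = (x^{2m-2},y^m,z^{2m})^2$, $P = (x^{2m-1},y^{2m-1})$, and $Q = (z)$. One first checks that these are primary monomial ideals with the required radicals: $\sqrt{M} = (x,y,z)$ since $M$ is a power of an $(x,y,z)$-primary ideal, $\sqrt{P} = (x,y)$, and $\sqrt{Q} = (z)$. Proposition \ref{lem_key}(i) then gives $\depth R/I^{(t)} \le 2$, and since $\depth R/I^{(t)} \ge 1$ always, it remains only to decide, for each $t$, whether $M^t \subseteq P^t + Q^t$; by Proposition \ref{lem_key}(ii) this holds precisely when $\depth R/I^{(t)} = 2$.

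\textbf{Translating the containment into inequalities.} Note $M^t = (x^{2m-2},y^m,z^{2m})^{2t}$ is generated by the monomials $x^{(2m-2)a}y^{mb}z^{2mc}$ with $a+b+c = 2t$, $a,b,c \ge 0$. Such a monomial lies in $P^t + Q^t = (x^{2m-1},y^{2m-1})^t + (z^t)$ if and only if it is divisible by some generator of $P^t$ or by $z^t$. Divisibility by $z^t$ means $2mc \ge t$, i.e. $c \ge t/(2m)$. Divisibility by a generator of $P^t = (x^{2m-1},y^{2m-1})^t$ means there exist $i+j = t$ with $(2m-1)i \le (2m-2)a$ and $(2m-1)j \le mb$. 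So $M^t \subseteq P^t + Q^t$ fails exactly when there is a triple $(a,b,c)$ with $a+b+c=2t$, $c \le t/(2m) - 1$ (more precisely $2mc < t$, i.e. $2mc \le t-1$), and no valid $(i,j)$ exists, meaning $\lfloor (2m-2)a/(2m-1)\rfloor + \lfloor mb/(2m-1)\rfloor < t$. The plan is to show this "bad" triple exists exactly when $t \le m-1$ and fails to exist when $t \ge m$.

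\textbf{Carrying out the two cases.} For $t \le m-1$: I would exhibit an explicit bad triple, the natural candidate being $c = 0$ (so $2mc = 0 \le t-1$ since $t \ge 1$) and $a + b = 2t$ chosen to make both $\lfloor (2m-2)a/(2m-1)\rfloor \le a-1$ and $\lfloor mb/(2m-1)\rfloor$ as small as possible; taking for instance $a = b = t$ one computes $\lfloor (2m-2)t/(2m-1)\rfloor + \lfloor mt/(2m-1)\rfloor$ and checks this is $< t$ precisely when $t \le m-1$ — this is where the specific exponents $2m-2$, $m$, $2m-1$ are calibrated, since $(2m-2)t/(2m-1) \approx t - t/(2m-1)$ and $mt/(2m-1) \approx t/2$, so the sum is roughly $3t/2 - t/(2m-1)$, and the floor reductions push it below $t$ only in the small range. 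For $t \ge m$: I would show every triple $(a,b,c)$ with $a+b+c=2t$ and $2mc \le t-1$ is good. The idea is that $2mc \le t-1 < 2m$ already forces $c = 0$ when $t \le 2m$, and more generally $c$ is small, so $a + b$ is close to $2t$; then a counting argument on the floor functions — using that $\lfloor (2m-2)a/(2m-1)\rfloor + \lfloor mb/(2m-1)\rfloor \ge \lceil ((2m-2)a + mb)/(2m-1)\rceil - 1$ or a direct case split on residues of $a,b$ modulo $2m-1$ — yields a valid $(i,j)$. The main obstacle is exactly this floor-function bookkeeping: one must control $\lfloor (2m-2)a/(2m-1)\rfloor + \lfloor mb/(2m-1)\rfloor$ sharply enough to pin down the threshold at $t = m$, and the estimates are delicate precisely at the boundary, so the computation has to be organized carefully rather than bounded crudely.
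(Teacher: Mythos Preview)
Your overall strategy --- apply Proposition~\ref{lem_key} and reduce to deciding when $M^t \subseteq P^t + Q^t$, then translate this into the system of floor inequalities --- is exactly the paper's approach. The translation to the triple $(a,b,c)$ and the two floor conditions is correct.

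However, your explicit witness for the case $t \le m-1$ is wrong. With $(a,b,c) = (t,t,0)$ one has
\[
\left\lfloor \frac{(2m-2)t}{2m-1}\right\rfloor + \left\lfloor \frac{mt}{2m-1}\right\rfloor
= (t-1) + \left\lfloor \frac{mt}{2m-1}\right\rfloor,
\]
since $(2m-2)t = (2m-1)t - t$ and $1 \le t \le m-1 < 2m-1$. This sum is $< t$ if and only if $\lfloor mt/(2m-1)\rfloor = 0$, i.e.\ $mt < 2m-1$, which forces $t = 1$. So your candidate fails for every $2 \le t \le m-1$; your own heuristic ``the sum is roughly $3t/2$'' already signals this, since $3t/2 \ge t$. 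The paper instead takes $(a,b,c) = (1,\,2t-1,\,0)$, for which the first floor vanishes and $m(2t-1) = (2m-1)t - (m-t)$ with $1 \le m-t \le m-1 < 2m-1$, giving sum $t-1 < t$.

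For the direction $t \ge m$, your sketch is too vague to be a proof. The bound ``$\lfloor x\rfloor + \lfloor y\rfloor \ge \lceil x+y\rceil - 1$'' you propose is false in general (take $x = y = 0.6$), and the remark that ``$c$ is small'' is not by itself enough. The paper's argument first uses the crude bound that the sum of the two numerators is $< (2m-1)(t+1)$ to obtain a linear inequality in $a$ (your $\ell$) and $c$, and then splits into the sub-cases $c = 0$ and $c \ge 1$; in the latter, one deduces $t \ge 2m+1$ and pushes the inequalities to force $a = 0$ and then $c \le 1$, reaching a contradiction in each branch. None of this is hard, but it does require organizing the estimates carefully rather than appealing to a single crude lower bound.
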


\begin{proof}
Let
\begin{align*}
M &=(x^{2m-2},y^m,z^{2m})^2,\\
P &=(x^{2m-1},y^{2m-1}),\\
Q &=(z).
\end{align*}
By Proposition \ref{lem_key}, we have to show that $M^t \subseteq P^t+Q^t$ if and only if $t\ge m$.

A minimal monomial generator of $M^t=(x^{2m-2},y^m,z^{2m})^{2t}$ has the form
\[
f = (x^{(2m-2)})^{\ell}(y^m)^{i-\ell}(z^{2m})^{2t-i}=x^{(2m-2)\ell}y^{m(i-\ell)}z^{2m(2t-i)}
\]
where $0\le \ell \le i\le 2t$. Note that $f \in P^t+Q^t$ if and only if $f \in P^t$ or $f \in Q^t$.
Comparing the exponents of $x,y,z$ in $f$ with those in the minimal monomial generators of $P^t+Q^t$, we see that $M^t \subseteq P^t+Q^t$ if and only if the following system has no solution:
\begin{align}
0 &\le \ell  \le i \le 2t, \label{eq_elli}\\
\left\lfloor \frac{(2m-2)\ell}{2m-1} \right \rfloor &+\left\lfloor \frac{m(i-\ell)}{2m-1} \right \rfloor < t, \label{eq_ineqxy}\\
2m(2t-i) &<t. \label{eq_ineqz}
\end{align} 

\textbf{Case 1}: $t\le m-1$. \par
\smallskip

The system has the solution $\ell=1, i=2t$. Indeed, it suffices to check whether
\[
\left\lfloor \frac{2m-2}{2m-1} \right \rfloor +\left\lfloor \frac{m(2t-1)}{2m-1} \right \rfloor < t.
\]
This is true since
\[
\left\lfloor \frac{2m-2}{2m-1} \right \rfloor=0,\;
\left\lfloor \frac{m(2t-1)}{2m-1} \right \rfloor \le  \left\lfloor \frac{2mt - t-1}{2m-1} \right \rfloor  =  t-1.
\]

\textbf{Case 2}: $t\ge m$. \par
\smallskip

We show that the system has no solution. 
Note that the sum of the two fractions inside the integral parts of \eqref{eq_ineqxy} must be less than $t+1$. Then
\[
(2m-2)\ell+m(i-\ell) < (2m-1)(t+1),
\]
which implies
\begin{equation}
\label{eq_ineq_elli}
(m-2)\ell  \le m(2t-i)+2m-t-2.
\end{equation}

If $i=2t$, we have 
$$(m-2)\ell\le 2m-t-2 \le m-2.$$
Hence $\ell \le 1$. But then \eqref{eq_ineqxy} does not hold because
\[
\left\lfloor \frac{m(i-\ell)}{2m-1} \right \rfloor  \ge \left\lfloor \frac{m(2t-1)}{2m-1} \right \rfloor \ge t.
\]

If $i\le 2t-1$,  \eqref{eq_ineqz} implies $2m \le t-1$. 
Combining \eqref{eq_ineqz} with \eqref{eq_ineq_elli}, we get
\[
(m-2)\ell \le \frac{t-1}{2}+2m-t-2=2m-3-\frac{t-1}{2} \le m-3.
\]
From this it follows that $\ell=0$ and 
$$0 \le 2m-3- \frac{t-1}{2}.$$
Hence, $t \le 4m-5.$ Now, \eqref{eq_ineqz} implies $2m(2t - i) \le 4m-5$.
Therefore, $2t-i \le 1$. 
So we get $i=2t-1$. But then \eqref{eq_ineqxy} does not hold because
\[
\left\lfloor \frac{m(i-\ell)}{2m-1} \right \rfloor  \ge \left\lfloor \frac{m(2t-1)}{2m-1} \right \rfloor \ge t.
\]

The proof of Lemma \ref{type1} is now complete.
\end{proof}

The existence of symbolic depth functions of type {\bf B} is established with the following lemma.

\begin{lem}
\label{type2}
Let $m\ge 1$ be an integer. Let $R=k[x,y,z,u,v]$ and 
\[
I=(x^{2m},y^{2m},xy^{m-1}z,z^{2m})^2 \cap (x^m,y^m,u) \cap (z^{2m+2},v).  
\]
Then 
$$
\depth R/I^{(t)}= \begin{cases}
2 &\text{if $t = m$},\\
1 &\text{if $t \neq m$}.
\end{cases}
$$
\end{lem}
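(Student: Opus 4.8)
The plan is to apply Proposition~\ref{lem_key} with
\[
M=(x^{2m},y^{2m},xy^{m-1}z,z^{2m})^2,\qquad P=(x^m,y^m),\qquad Q=(z^{2m+2}),
\]
so that $I=M\cap(P,u)\cap(Q,v)$. These are primary monomial ideals with $\sqrt M=(x,y,z)$, $\sqrt P=(x,y)$, $\sqrt Q=(z)$: the ideal $(x^{2m},y^{2m},xy^{m-1}z,z^{2m})$ is generated by monomials in $x,y,z$ and contains the pure powers $x^{2m},y^{2m},z^{2m}$, so its restriction to $k[x,y,z]$ has an Artinian quotient, hence is $(x,y,z)$-primary, and therefore so are that ideal and its square $M$ in $R$. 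By Proposition~\ref{lem_key}, $\depth R/I^{(t)}\le 2$ for every $t\ge 1$, with equality if and only if $M^t\subseteq P^t+Q^t$. Thus the lemma reduces to proving that $M^t\subseteq P^t+Q^t$ if and only if $t=m$.

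To make this tractable I would turn the containment into a system of linear diophantine inequalities, as in the proof of Lemma~\ref{type1}. The minimal monomial generators of $M^t=(x^{2m},y^{2m},xy^{m-1}z,z^{2m})^{2t}$ are
\[
f=(x^{2m})^a(y^{2m})^b(xy^{m-1}z)^c(z^{2m})^d=x^{2ma+c}\,y^{2mb+(m-1)c}\,z^{c+2md},
\]
with $a,b,c,d\ge 0$ and $a+b+c+d=2t$. Since $P^t+Q^t$ is a monomial ideal, $f\in P^t+Q^t$ iff $f\in P^t$ or $f\in Q^t$. Using the standard fact $x^\alpha y^\beta\in(x^m,y^m)^t\iff\lfloor\alpha/m\rfloor+\lfloor\beta/m\rfloor\ge t$ together with $\lfloor(2ma+c)/m\rfloor=2a+\lfloor c/m\rfloor$ and $\lfloor(2mb+(m-1)c)/m\rfloor=2b+c-\lceil c/m\rceil$, one gets that $f\in P^t$ iff $2(a+b)+c-\epsilon_c\ge t$, where $\epsilon_c:=\lceil c/m\rceil-\lfloor c/m\rfloor$ equals $1$ if $m\nmid c$ and $0$ otherwise; and $f\in Q^t=(z^{(2m+2)t})$ iff $c+2md\ge(2m+2)t$, which after substituting $d=2t-a-b-c$ becomes $2m(a+b)+(2m-1)c\le 2(m-1)t$. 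Setting $s:=a+b\ge 0$, we conclude that $M^t\not\subseteq P^t+Q^t$ if and only if there is a pair of integers $s,c\ge 0$ with $s+c\le 2t$ satisfying
\[
2s+c-\epsilon_c\le t-1\qquad\text{and}\qquad 2ms+(2m-1)c\ge 2(m-1)t+1 .
\]

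Finally I would analyze this system. If $t\ne m$, I claim the pair $(s,c)=(0,c_0)$ is a solution, where $c_0=t$ if $m\nmid t$ and $c_0=t-1$ if $m\mid t$ (so $t\ge 2m$ in the latter case since $t>m$). Indeed $s+c_0\le 2t$ trivially; the first inequality reads $c_0-\epsilon_{c_0}\le t-1$, which holds because $\epsilon_t=1$ when $m\nmid t$ and because $c_0=t-1$ otherwise; and the second inequality reads $(2m-1)c_0\ge 2(m-1)t+1$, which amounts to $t\ge 1$ when $c_0=t$ and to $t\ge 2m$ when $c_0=t-1$. Hence $\depth R/I^{(t)}=1$ for $t\ne m$. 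For $t=m$ I must show no such pair exists; this is the crux. Assume $2s+c-\epsilon_c\le m-1$ and use the identity $2ms+(2m-1)c=m(2s+c)+(m-1)c$: if $m\mid c$ then $2s+c\le m-1$ forces $c=0$, so $2ms+(2m-1)c=2ms\le m(m-1)<2(m-1)m+1$; if $m\nmid c$ then $2s+c\le m$ and $c\le m-1$, so $m(2s+c)+(m-1)c\le m^2+(m-1)^2=2(m-1)m+1$, and equality would require $2s+c=m$ together with $c=m-1$, i.e. $2s=1$, which is impossible, so $2ms+(2m-1)c\le 2(m-1)m<2(m-1)m+1$. In either case the second inequality fails, so $M^m\subseteq P^m+Q^m$ and $\depth R/I^{(m)}=2$. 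The main obstacle is precisely this case $t=m$: the estimate $m(2s+c)+(m-1)c\le m^2+(m-1)^2$ only delivers the bound $2(m-1)m+1$ with equality a priori possible, and one must rule it out by a parity argument; the remaining ranges are direct substitutions. (The edge value $m=1$, where $xy^{m-1}z=xz$, is covered uniformly: for $t=1$ the only candidate pair is $s=c=0$, and for $t\ge 2$ one uses $c_0=t-1$.)
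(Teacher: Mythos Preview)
Your proof is correct and follows the paper's approach: both reduce via Proposition~\ref{lem_key} to the containment $M^t\subseteq P^t+Q^t$ and analyze it by parametrizing the generators of $M^t$, and your witnesses $(s,c)=(0,t)$ and $(0,t-1)$ for $t\ne m$ are exactly the monomials $(xy^{m-1})^tz^{(2m+1)t}$ and $(xy^{m-1})^{t-1}z^{(2m+1)t+2m-1}$ that the paper exhibits. Your handling of the case $t=m$ through the identity $2ms+(2m-1)c=m(2s+c)+(m-1)c$ and the parity obstruction at the extremal point is a slightly tidier variant of the paper's argument, which instead derives the pair of inequalities $2i+j\le m\le i+j$ (with $i=s$, $j=c$) to force $(i,j)=(0,m)$ and then checks that case directly.
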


\begin{proof}
Let 
\begin{align*}
M &=(x^{2m},y^{2m},xy^{m-1}z,z^{2m})^2,\\
P &=(x^m,y^m),\\
Q &=(z^{2m+2}).
\end{align*}
By Proposition \ref{lem_key}, we have to show that $M^t \subseteq P^t+Q^t$ if and only if $t =m$. 
Note that
$$M^t = \sum_{i,j \ge 0,\ i+j \le 2t} (x^{2m},y^{2m})^i(xy^{m-1}z)^jz^{2m(2t-i-j)}.$$

{\bf Case 1}: $t = m$. \par
\smallskip

Let $J = (x^{2m},y^{2m})^i(xy^{m-1}z)^jz^{2m(2m-i-j)}$ for some $i,j \ge 0$, $i+j \le 2m.$
We will show that $J \subseteq P^m+Q^m$. 
Assume to the contrary that $J \not\subseteq P^m + Q^m$. Then $J \not\subseteq P^m$ and $J \not\subseteq Q^m$.

Write 
\begin{align*} 
j  & = pm + a,\ 0 \le a < m,\\
(m-1)j & = qm + b,\ 0 \le b < m.
\end{align*}
Then $J \subseteq (x^{2m},y^{2m})^ix^{pm}y^{qm}$. Hence, $(x^{2m},y^{2m})^ix^{pm}y^{qm} \not\subseteq P^m$. Let $f$ be a minimal monomial generator of $(x^{2m},y^{2m})^ix^{pm}y^{qm}$ which is not contained in $P^m$.
Since $f$ is a products of $x^m,y^m$  and since $P^m = (x^m,y^m)^m$, the degree of $f$ must be less than the degree of the minimal monomial generators of $P^m$. Hence, $2mi + pm + qm < m^2.$
We have 
\begin{align*}
2mi + pm + qm - m^2 & = 2mi + mj - a - b - m^2\\
& = m(2i + j - m) -  (a+b).
\end{align*}
Therefore, $m(2i + j - m) < a+b$. Note that $a+b < 2m$ and that
$a+b = jm - pm - qm$ is divisible by $m$. Then $a + b \le m$. This implies $m(2i + j - m) < m$.
Hence, $2i + j - m \le 0$ or $2i+j \le m$. \par

On the other hand, we have $J \subseteq (z^{2m(2m-i-j)})$. Hence, $z^{2m(2m-i-j)} \not\in Q^m = (z^{(2m+2)m})$.
This implies $2m(2m-i-j) < (2m+2)m.$
Hence, $2m - i - j < m+1$ or $m < i+j +1$. 

Summing up, we have $2i+j \le m \le i+j,$
which implies $i = 0$ and $j = m$.  
But then we have $J = (x^my^{(m-1)m}z^{m(2m+1)}) \subseteq P^m$, a contradiction. 
\smallskip

{\bf Case 2}:  $t$ is not divisible by $m$. \par
\smallskip

Consider the monomial $(xy^{m-1})^tz^{(2m+1)t} = (xy^{m-1}z)^t(z^{2m})^t \in M^t$. 
Since $t$ is not divisible by $m$, $(xy^{m-1})^t$ is not a product of the monomials $x^m, y^m$.
Note that $(xy^{m-1})^t$ and the minimal generators of $P^t = (x^m,y^m)^t$ have the same degree.
Then $(xy^{m-1})^t \not\in P^t$.  
Hence, $(xy^{m-1})^tz^{(2m+1)t} \not\in P^t$.
Since $(2m +1)t < (2m+2)t$, we have $z^{(2m+1)t}  \not\in (z^{(2m+2)t}) = Q^t$. 
Hence, $(xy^{m-1})^tz^{(2m+1)t} \not\in Q^t$.
Therefore, $(xy^{m-1})^tz^{(2m+1)t} \not\in P^t+Q^t$.
This implies $M^t \not\subseteq P^t+Q^t$. \par
\smallskip

{\bf Case 3}: $t \neq m$ and $t$ is divisible by $m$.
\smallskip

Consider the monomial $(xy^{m-1})^{t-1}z^{(2m+1)t+2m-1} = (xy^{m-1}z)^{t-1}(z^{2m})^{t+1} \in M^t$.
We have $\deg (xy^{m-1})^{t-1} = m(t-1) < mt$. 
Since $mt$ is the degree of the minimal generators of $P^t = (x^m,y^m)^t$, this implies 
$(xy^{m-1})^{t-1} \not\in P^t$. Hence, $(xy^{m-1})^{t-1}z^{(2m+1)t+2m-1} \not\in P^t$.
Since $t \neq m$ and $t$ is divisible by $m$, we have $2m \le t$. Hence,
$(2m+1)t+2m -1 < (2m+2)t$. 
This implies $z^{(2m+1)t+2m -1} \not\in (z^{(2m+2)t}) = Q^t$. 
Hence, $(xy^{m-1})^{t-1}z^{(2m+1)t+2m -1} \not\in Q^t$. 
Therefore, $(xy^{m-1})^{t-1}z^{(2m+1)t+2m -1} \not\in P^t+Q^t$. 
This shows that $M^t \not\subseteq P^t+Q^t$.
\par

The proof of Lemma \ref{type2} is now complete.
\end{proof}

For type {\bf C} we first note that the existence of the symbolic depth function $1,1,1,...$ is trivial, e.g. with
$R = k[x,y]$ and $I = (x)$. 
The existence of other symbolic depth functions of type {\bf C} follows from the following result.

\begin{thm}
\label{periodic}
Let $m\ge 2$ and $0 \le d < m$ be integers. 
There exists a monomial ideal $I$ in $R=k[x,y,z,u,v]$ such that
$$
\depth R/I^{(t)}= \begin{cases}
2 &\text{if $t \equiv d$ modulo $m$},\\
1 &\text{otherwise}.
\end{cases}
$$
\end{thm}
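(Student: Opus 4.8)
The plan is to construct a monomial ideal $I$ of the form appearing in Proposition \ref{lem_key}, namely $I = M \cap (P,u) \cap (Q,v)$ with $\sqrt M = (x,y,z)$, $\sqrt P = (x,y)$, $\sqrt Q = (z)$, and then to choose the primary ideals $M$, $P$, $Q$ so that the containment $M^t \subseteq P^t + Q^t$ holds exactly when $t \equiv d \pmod m$. By Proposition \ref{lem_key} this immediately yields $\depth R/I^{(t)} = 2$ precisely for $t \equiv d \pmod m$ and $\depth R/I^{(t)} = 1$ otherwise, which is the assertion. So the whole problem reduces to a purely combinatorial design question about monomial ideals: make the solvability of a certain system of linear diophantine inequalities depend on the residue of $t$ modulo $m$.

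First I would take $P$ and $Q$ to be powers of variables, say $P = (x^p, y^p)$ and $Q = (z^q)$ for suitable integers $p, q$, so that membership of a monomial $x^\alpha y^\beta z^\gamma$ in $P^t + Q^t$ is governed by the two conditions $\lfloor \alpha/p \rfloor + \lfloor \beta/p \rfloor \ge t$ or $\gamma \ge qt$. Then I would take $M = J^e$ for a monomial ideal $J$ with $\sqrt J = (x,y,z)$, of the shape $J = (x^{a}, y^{a}, z^{a}, x^{b_1}y^{b_2}z^{b_3}, \ldots)$, chosen so that the ``mixed'' generator(s) create a monomial in $M^t$ escaping $P^t \cup Q^t$ whenever $t \not\equiv d$. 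The intuition, guided by Lemmas \ref{type1} and \ref{type2}, is that a generator like $x y^{m-1} z$ behaves like ``$\tfrac1m$ of a power of $x$'' in the $x,y$ direction while costing a fixed amount in $z$; raising it to the $t$-th power produces a monomial whose $x,y$-content clears the floor-thresholds in $P^t$ only when $t$ is a multiple of $m$. To shift the good residue from $0$ to $d$, I would incorporate an extra fixed ``offset'' generator (or adjust the exponent $q$ in $Q$ and the power $e$) so that the borderline case occurs at $t \equiv d$ rather than $t \equiv 0$; concretely one can multiply through by auxiliary factors of $x,y,z$ so that the relevant inequality reads $m \mid (t-d)$.

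The key steps, in order, would be: (1) fix the template $I = M \cap (P,u) \cap (Q,v)$ and recall from Lemma \ref{same degree}/Proposition \ref{primary} that $I^{(t)} = M^t \cap (P,u)^t \cap (Q,v)^t$, so Proposition \ref{lem_key} applies; (2) write down explicit exponent choices for the generators of $M$, $P$, $Q$ in terms of $m$ and $d$; (3) translate $M^t \subseteq P^t + Q^t$ into the statement that a certain finite system of linear inequalities in the exponents of the generators (with floor functions coming from $P$) has no solution — exactly as in the proofs of Lemmas \ref{type1} and \ref{type2}; (4) analyze this system by cases on the residue of $t$ modulo $m$, exhibiting an explicit escaping monomial generator of $M^t$ when $t \not\equiv d$, and proving infeasibility (hence containment) when $t \equiv d$.

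The main obstacle I expect is step (4) in the ``containment'' direction, i.e. proving that when $t \equiv d \pmod m$ every minimal monomial generator of $M^t$ lies in $P^t + Q^t$. The two competing linear constraints — the $z$-degree bound forcing the generator to use few copies of the pure $z$-generator, and the $x,y$-floor bound forcing it to use few copies of the mixed generator — must be shown jointly incompatible with escaping, and the floor functions make this delicate; this is precisely where Lemma \ref{type2} needed a careful three-case argument splitting on whether $t$ is divisible by $m$ at all. I anticipate a similar bookkeeping argument here, with the residue $d$ entering as a fixed shift, and possibly needing $m \ge 2$ (the case $d$ arbitrary in $[0,m)$ handled by the offset construction) with the degenerate period-one situation already dispatched by the trivial example $R = k[x,y]$, $I=(x)$ noted before the theorem.
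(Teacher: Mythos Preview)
Your framework is right: reduce to Proposition \ref{lem_key} and engineer primary ideals $M,P,Q$ so that $M^t\subseteq P^t+Q^t$ holds exactly when $t\equiv d\pmod m$. You also correctly anticipate that the infeasibility direction of the diophantine system is the hard part. But your mechanism for hitting an \emph{arbitrary} residue $d$ --- ``incorporate an extra offset generator'' or ``multiply through by auxiliary factors of $x,y,z$'' --- is where the proposal stops being a plan and becomes a hope. The condition $M^t\subseteq P^t+Q^t$ scales multiplicatively in $t$ under the operations you describe (taking powers of generators, multiplying by fixed monomials), while shifting the good residue from $0$ to $d$ is an \emph{additive} change in $t$; there is no obvious way to get the latter from the former, and you do not supply one.

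The paper's route is genuinely different from what you sketch, and the missing idea has two parts. First, it does not shift from the $d=0$ case at all. Instead it proves a separate technical lemma (Lemma \ref{thm_periodic_large_d}) giving an explicit four-generator construction $M=(x^{2m+1-d},y^{m+2d-1},z^{m+2d-1},xy^{m-1}z)^2$, $P=(x^{2m},y^{2m})$, $Q=(z^2)$ that already builds $d$ into the exponents --- but this only works under the restriction $\sqrt{m/2}\le d\le m/2$, and its infeasibility analysis is a long case split (the bulk of the work). Second, to reach general $(m,d)$ the paper uses a multiplicative reduction you did not anticipate: if $M_1,P_1,Q_1$ work for a pair $(m_1,d_1)$, then $M=M_1^c$, $P=P_1^c$, $Q=Q_1^c$ satisfy $M^t\subseteq P^t+Q^t$ iff $M_1^{ct}\subseteq P_1^{ct}+Q_1^{ct}$ iff $ct\equiv d_1\pmod{m_1}$. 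Choosing $m_1=cm$, $d_1=cd$ with $c$ large enough to land in the range of Lemma \ref{thm_periodic_large_d} handles $1\le d\le m/2$; the case $d>m/2$ is then reduced to $m-d<m/2$ via the further substitution $c=m-1$ (since $(m-1)t\equiv m-d\pmod m$ iff $t\equiv d\pmod m$), and $d=0$ is handled separately by Lemma \ref{thm_periodic_d=m}. This power-of-ideals trick, together with the restricted-range Lemma \ref{thm_periodic_large_d}, is the substance you are missing.
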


The construction of the ideal $I$ in the proof of Theorem \ref{periodic} depends 
on $d$. For $d = 0$ we have the following ideals.

\begin{lem}
\label{thm_periodic_d=m}
Let $m\ge 2$ be an integer. Let 
$R=k[x,y,z,u,v]$ and 
\[
I= (x^{2m-2},y^m,z^m)^2 \cap (x^{2m-1},y^{2m-1},u) \cap (z,v). 
\]
Then 
$$
\depth R/I^{(t)}= \begin{cases}
2 &\text{if $t \equiv 0$ modulo $m$},\\
1 &\text{otherwise}.
\end{cases}
$$
\end{lem}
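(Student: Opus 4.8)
The plan is to apply Proposition~\ref{lem_key} with
\[
M=(x^{2m-2},y^m,z^m)^2,\qquad P=(x^{2m-1},y^{2m-1}),\qquad Q=(z).
\]
These are primary monomial ideals with $\sqrt M=(x,y,z)$, $\sqrt P=(x,y)$, $\sqrt Q=(z)$ (for $M$: its generators involve only $x,y,z$ and it contains $x^{4m-4},y^{2m},z^{2m}$), and $I=M\cap(P,u)\cap(Q,v)$. Hence Proposition~\ref{lem_key} gives $\depth R/I^{(t)}\le 2$ for all $t\ge1$, with $\depth R/I^{(t)}=2$ if and only if $M^t\subseteq P^t+Q^t$. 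So everything reduces to proving that $M^t\subseteq P^t+Q^t$ holds exactly when $m\mid t$.

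Following the pattern of the proof of Lemma~\ref{type1}, I would rewrite this containment in terms of a system of inequalities. A minimal monomial generator of $M^t=(x^{2m-2},y^m,z^m)^{2t}$ has the form $x^{(2m-2)a}y^{mb}z^{mc}$ with $a,b,c\ge0$ and $a+b+c=2t$; it lies in $Q^t=(z^t)$ iff $mc\ge t$, and in $P^t=(x^{2m-1},y^{2m-1})^t$ iff
\[
\left\lfloor\frac{(2m-2)a}{2m-1}\right\rfloor+\left\lfloor\frac{mb}{2m-1}\right\rfloor\ge t.
\]
Thus $M^t\not\subseteq P^t+Q^t$ is equivalent to the solvability of the system $a+b+c=2t$, $mc<t$, $\lfloor(2m-2)a/(2m-1)\rfloor+\lfloor mb/(2m-1)\rfloor<t$ in non-negative integers. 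When $m\nmid t$, write $t=ms+r$ with $1\le r\le m-1$; I would check that $(a,b,c)=(1,\,s(2m-1)+2r-1,\,s)$ solves the system: indeed $a+b+c=2ms+2r=2t$, $mc=ms<t$, the first floor is $0$, and since $m(2r-1)<r(2m-1)$ we get $\lfloor mb/(2m-1)\rfloor=ms+\lfloor m(2r-1)/(2m-1)\rfloor\le ms+r-1<t$. So $M^t\not\subseteq P^t+Q^t$ for $m\nmid t$.

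For $m\mid t$, say $t=ms$, I would argue by contradiction: if $(a,b,c)$ solves the system then $mc<ms$ forces $c\le s-1$, hence $a+b\ge s(2m-1)+1$, and I claim $\lfloor(2m-2)a/(2m-1)\rfloor+\lfloor mb/(2m-1)\rfloor\ge ms$, contradicting the third inequality. This splits into: $a=0$ and $a=1$, which are immediate because then the first floor vanishes and $b\ge s(2m-1)$ makes the second floor at least $ms$; and $a\ge2$, where $(2m-2)a+mb=m(a+b)+(m-2)a\ge ms(2m-1)+3m-4$, so the sum of the two floors is at least $\bigl((2m-2)a+mb-2(2m-2)\bigr)/(2m-1)\ge ms-m/(2m-1)>ms-1$ and hence is at least $ms$. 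The main obstacle is this divisibility case: all the floor bounds are tight at $ms$, so one must carefully track the loss $2(2m-2)/(2m-1)$ incurred when estimating a sum of two floors by the floor of the sum, and treat the degenerate configurations $a=0,1$ by hand. Once the escaping monomial is guessed, the case $m\nmid t$ is routine.
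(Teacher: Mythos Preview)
Your argument is correct and follows the same route as the paper: you invoke Proposition~\ref{lem_key} with the same $M,P,Q$, translate $M^t\not\subseteq P^t+Q^t$ into the identical system of floor inequalities, and for $m\nmid t$ you exhibit exactly the paper's witness (their $(i,j)=(1,2t-1-\lfloor(t-1)/m\rfloor)$ is your $(a,b,c)=(1,s(2m-1)+2r-1,s)$). The only difference is in the no-solution argument for $m\mid t$: the paper separates $m=2$ from $m\ge3$ and derives a contradiction via the relaxed inequality $(2m-2)i+mj\le(2m-1)(t+1)-1$, whereas you split on $a\in\{0,1\}$ versus $a\ge2$ and bound the two floors directly from $a+b\ge s(2m-1)+1$; your organization is a bit cleaner since it handles all $m\ge2$ uniformly.
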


\begin{proof}  
Let 
\begin{align*}
M &=(x^{2m-2},y^m,z^m)^2,\\
P &=(x^{2m-1},y^{2m-1}),\\
Q &=(z).
\end{align*}
By Proposition \ref{lem_key}, it suffices to show that $M^t \subseteq P^t+Q^t$ if and only if $t$ is divisible by $m$.
\par

A minimal monomial generator of $M^t$ has the form
\[
f = x^{(2m-2)i}y^{mj}z^{m(2t-i-j)}
\]
where $i,j\ge 0, i+j\le 2t$. Note that $f \in P^t+Q^t$ if and only if $f \in P^t$ or $f \in Q^t$.
Comparing the exponents of $x,y,z$ of $f$  with those of generators of $P^t$ and $Q^t$, we see that $M^t \subseteq P^t+Q^t$ if and only if the following system has no solution:
\begin{align}
i,j \ge 0, i+j &\le 2t, \label{eq_ij}\\
\biggl\lfloor \frac{(2m-2)i}{2m-1} \biggr\rfloor &+ \biggl \lfloor \frac{mj}{2m-1} \biggr\rfloor <t, \label{eq_power_xy}\\
m(2t-i-j) &<t. \label{eq_power_z}
\end{align}

{\bf Case 1}: $t$ is not divisible by $m$.
\smallskip

Set $\alpha= \lfloor(t-1)/m\rfloor$. Let $i=1$ and $j=2t-1-\alpha$. 
Then \eqref{eq_ij} is satisfied. For \eqref{eq_power_xy}, we have
\[
\biggl\lfloor \frac{(2m-2)i}{2m-1} \biggr\rfloor =\biggl\lfloor \frac{2m-2}{2m-1} \biggr\rfloor=0.
\]
Note that $\alpha +1 > (t-1)/m$. Then $t < m(\alpha+1)+1$. 
Since $t$ is not divisible by $m$, we have $t \le m(\alpha+1)-1$.
Therefore,
\[
\biggl\lfloor \frac{mj}{2m-1} \biggr\rfloor = \biggl\lfloor  \frac{m(2t-1-\alpha)}{2m-1} \biggr\rfloor  \le \biggl\lfloor \frac{2mt-t-1}{2m-1}\biggr\rfloor < t.
\]
Hence, \eqref{eq_power_xy} is satisfied. For \eqref{eq_power_z}, we have
\[
m(2t-i-j)=m\alpha \le t-1 < t.
\]
Therefore, the system \eqref{eq_ij}--\eqref{eq_power_z} has a solution in this case.
\smallskip

{\bf Case 2}: $t$ is divisible by $m$.
\smallskip

Assume that the above system has a solution $(i,j)$. Then
the sum of the two terms inside the integral parts of \eqref{eq_power_xy}  are less than $t+1$. Hence
\begin{equation}
\label{eq_power_xy_1}
(2m-2)i+mj \le (2m-1)(t+1)-1.
\end{equation}
Since $t$ is divisible by $m$, \eqref{eq_power_z} implies
\begin{equation}
\label{eq_power_z_1}
m(2t-i-j) \le t-m.
\end{equation}

If $m=2$,  \eqref{eq_power_xy_1} becomes
\[
2i+2j \le 3(t+1)-1=3t+2.
\]
From \eqref{eq_power_z_1} we get $3t+2 \le 2i+2j$. 
Hence, $2i+2j=3t+2$. By \eqref{eq_power_xy} we have
\[
\biggl\lfloor \frac{2i}{3} \biggr\rfloor + \biggl \lfloor \frac{2j}{3} \biggr\rfloor <t.
\]
Since $2j=3t+2-2i$, we obtain
\begin{equation}
\label{eq_power_xy_2}
\biggl\lfloor \frac{2i}{3} \biggr\rfloor + \biggl \lfloor \frac{2-2i}{3} \biggr\rfloor <0.
\end{equation}
Write $2i=3u+v$, where $0\le v\le 2$. Then
\begin{align*}
\biggl\lfloor \frac{2i}{3} \biggr\rfloor  =u\ \text{and}\ \biggl\lfloor \frac{2-2i}{3} \biggr\rfloor  = \biggl\lfloor \frac{2-v}{3} \biggr\rfloor -u =-u,
\end{align*}
which gives a contradiction to \eqref{eq_power_xy_2}.

If $m\ge 3$, \eqref{eq_power_z_1} implies $mj \ge m(2t-i)-t+m.$  Combining it with \eqref{eq_power_xy_1} we obtain
\[
(2m-2)i+m(2t-i)-t+m \le (2m-1)(t+1)-1
\]
or equivalently, $(m-2)i \le m-2$. Thus, $i \le 1$. 
Now, \eqref{eq_power_z_1} implies  $mj\ge (2m-1)t$. But then 
\[
\biggl\lfloor \frac{mj}{2m-1} \biggr\rfloor \ge t,
\]
contradicting \eqref{eq_power_xy}.

Therefore, the system \eqref{eq_ij}--\eqref{eq_power_z} has no solution in this case.
The proof of Lemma \ref{thm_periodic_d=m} is now complete.
\end{proof}

For $d >  0$, the proof of Theorem \ref{periodic} is based on the following construction.

\begin{lem}
\label{thm_periodic_large_d}
Let $m\ge 4$ and $d\ge 2$ be integers such that $\sqrt{m/2} \le d \le m/2$. Let 
\begin{align*}
M &=(x^{2m+1-d},y^{m+2d-1},z^{m+2d-1},xy^{m-1}z)^2,\\
P &=(x^{2m},y^{2m}),\\
Q &=(z^2).
\end{align*} 
Then $M^t \subseteq P^t +Q^t$  if and only if $t\equiv d$ modulo $m$.
\end{lem}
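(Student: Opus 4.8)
The plan is to turn $M^t\subseteq P^t+Q^t$ into an explicit system of linear diophantine (in)equalities, solve it by hand, and read off the answer, exactly in the spirit of Lemmas \ref{type1}, \ref{type2} and \ref{thm_periodic_d=m}. A minimal monomial generator of $M^t=(x^{2m+1-d},y^{m+2d-1},z^{m+2d-1},xy^{m-1}z)^{2t}$ has the form
$$f=x^{\alpha}y^{\beta}z^{\gamma},\qquad \alpha=(2m+1-d)a+e,\quad \beta=(m+2d-1)b+(m-1)e,\quad \gamma=(m+2d-1)c+e,$$
where $a,b,c,e\in\NN$ and $a+b+c+e=2t$. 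Since $P^t=(x^{2m},y^{2m})^t$ and $Q^t=(z^{2t})$ are monomial ideals and $f$ is a monomial, $f\in P^t+Q^t$ if and only if $f\in P^t$ or $f\in Q^t$, i.e. $\lfloor\alpha/(2m)\rfloor+\lfloor\beta/(2m)\rfloor\ge t$ or $\gamma\ge 2t$. Hence $M^t\not\subseteq P^t+Q^t$ if and only if the system
$$a+b+c+e=2t,\qquad (m+2d-1)c+e\le 2t-1,\qquad \Big\lfloor\tfrac{(2m+1-d)a+e}{2m}\Big\rfloor+\Big\lfloor\tfrac{(m+2d-1)b+(m-1)e}{2m}\Big\rfloor\le t-1$$
has a solution in non-negative integers, and it remains to show that this happens precisely when $t\not\equiv d\pmod m$. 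Before doing so I would record the elementary consequences of $\sqrt{m/2}\le d\le m/2$ that will be used: from $d\ge 2$ we get $2m+1-d<2m$ and from $d\le m/2$ we get $m+2d-1<2m$, so the total flooring loss $\tfrac{\alpha+\beta}{2m}-\lfloor\alpha/(2m)\rfloor-\lfloor\beta/(2m)\rfloor$ is always less than $2$; and $2d^2\ge m$, which is the arithmetic fact pinning the threshold to the residue $d$.

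For the implication ``$t\not\equiv d\pmod m$ $\Rightarrow$ the system is solvable'' I would always take $c=0$, so that $\gamma=e$ and the $z$-inequality becomes $e\le 2t-1$; it then suffices to exhibit, for each residue $r$ of $t$ modulo $m$ with $r\ne d$, a member of the short list of families $(a,b,0,\,2t-a-b)$ with $a\in\{0,1,2,3\}$ and $b$ small, and to verify the floor inequality by a direct computation with $t=qm+r$. Concretely, $(0,1,0,2t-1)$ settles $r=0$ and $d<r\le m-1$ after a one-line floor computation (both floors sum to $t-1$), and for $1\le r<d$ one uses $(a,b,0,\,2t-a-b)$ with $a$ (and possibly a small $b$) chosen according to where $r$ lies in $[1,d-1]$; here the lower bound $d\ge\sqrt{m/2}$ is exactly what guarantees that the finitely many residue windows covered by these families exhaust $\{1,\dots,d-1\}$.

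The hard part is the converse: if $t\equiv d\pmod m$, the system has no solution. I would argue by contradiction from a hypothetical solution $(a,b,c,e)$. Combining $a+b+c+e=2t$ with the $z$-inequality yields $a+b\ge 1+(m+2d-2)c$, and the necessary condition $\alpha+\beta\le 2mt+2m-2$ for the floor inequality rewrites, via the identity $\alpha+\beta=2mt-mc+(m+1-d)a+(2d-1)b$, as $(m+1-d)a+(2d-1)b\le 2m-2+mc$; together these confine $c$ and then $a$ to a finite range. For each admissible pair $(a,c)$ one writes $t=qm+d$ and shows $\lfloor\alpha/(2m)\rfloor+\lfloor\beta/(2m)\rfloor\ge t$ for every admissible $b$, contradicting the third inequality. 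This is the step where $2d^2\ge m$ is indispensable: it is precisely the condition under which the relevant half-open integer window for $b$ (of length governed by $m/d$ versus $2d$) contains no integer in the residue class $r=d$, so that the exact value $\tfrac{\alpha+\beta}{2m}$ exceeds $t$ by more than the flooring loss there, whereas a smaller $d$ would leave room for a solution. I expect the bookkeeping of the two floor functions in this final case analysis --- tracking the residues of $\alpha$ and $\beta$ modulo $2m$ as functions of $q,d,a,b,c$ --- to be the most delicate and error-prone part of the proof.
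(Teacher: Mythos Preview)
Your outline follows the paper's proof closely: translate $M^t\subseteq P^t+Q^t$ into a diophantine system, exhibit small witnesses for each residue $r\ne d$, and for $r=d$ first eliminate $c$, then bound $a$, then finish by a finite case check. Two points need correcting.

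First, you misplace the hypothesis $d\ge\sqrt{m/2}$. It is \emph{not} used in the easy direction at all. The witnesses $(a,b,c,e)=(0,1,0,2t-1)$ for $r=0$ or $d<r\le m-1$, $(1,1,0,2t-2)$ for $r=1$, and $(2,0,0,2t-2)$ for $2\le r\le d-1$ already cover every residue $r\ne d$, and the only constraint invoked is $d\le m/2$. The inequality $2d^2\ge m$ enters solely in the hard direction, in the sub-cases $a=0$ and $a=1$, where it is exactly what rules out the remaining large values of $b$ (respectively $b=2d+2$ and $b\ge d+1$).

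Second, your bound from the floor inequality should read $\alpha+\beta\le 2mt+2m-1$, giving $(m+1-d)a+(2d-1)b\le mc+2m-1$; combined with $a+b\ge 1+(m+2d-2)c$ this does not merely confine $c$ to a range but forces $c=0$, via a case split on the sign of $m-3d+2$ (so that one knows which of $m+1-d$, $2d-1$ is the smaller coefficient). Only after $c=0$ does $(m+1-d)a\le (m+1-d)a+(2d-1)b\le 2m-1$ yield $a\le 3$, and the argument then proceeds by four separate sub-cases $a\in\{0,1,2,3\}$, each disposed of by a direct floor computation. Your description of the endgame as a check over pairs $(a,c)$ does not reflect the actual structure.
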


\begin{proof}
A minimal monomial generator of $M^t$ has the form
\begin{align*}
f & = (x^{2m+1-d})^{\ell}(y^{m+2d-1})^{i-\ell}(z^{m+2d-1})^j(xy^{m-1}z)^{2t-i-j}\\
& = x^{(2m+1-d)\ell + 2t-j-i}y^{(m+2d-1)(i-\ell)+(m-1)(2t-i-j)}z^{(m+2d-1)j+2t-i-j},
\end{align*}
where 
\begin{align}
0  & \le  \ell \le i \le 2t, \label{eq_ineq10}\\
0   & \le j  \le 2t-i. \label{eq_ineq11}
\end{align}
Note that $f \in P^t +Q^t$  if and only if $f \in P^t$ or $f \in Q^t$.
It is clear that $f \not\in P^t$ if and only if 
$$x^{(2m+1-d)\ell + (2t-j-i)}y^{(m+2d-1)(i-\ell)+(m-1)(2t-i-j)} \not\in (x^{2m},y^{2m})^t$$
if and only if
\begin{equation*}
\left\lfloor \frac{(2m+1-d)\ell + 2t-j-i}{2m} \right \rfloor + \left\lfloor \frac{(m+2d-1)(i-\ell)+(m-1)(2t-i-j)}{2m}  \right\rfloor < t.
\end{equation*}
Write $t=qm+e$, where $0 \le e \le m-1$. We can express this condition as  
\begin{equation}
\left\lfloor \frac{(2m-d+1)\ell+2e-i-j}{2m} \right \rfloor + \left\lfloor \frac{2di-2e-(m+2d-1)\ell-(m-1)j}{2m}  \right\rfloor < 0.
\label{eq_ineq3}
\end{equation}
We have $f \not\in Q^t$ if and only if $z^{(m+2d-1)j+2t-i-j} \not\in (z^{2t})$ if and only if
\begin{equation}
(m+2d-2)j - i < 0. \label{eq_ineq2} 
\end{equation}
Therefore, $M^t \subseteq P^t +Q^t$ if and only if the system of inequalities \eqref{eq_ineq10}--\eqref{eq_ineq2} has no solution. It suffices now to show that this system has no solution if and only if $e = d$.
\smallskip

\textbf{Case 1}:  $e = 0$ or $d < e \le m-1$.
\smallskip

Choose $i=1,j=0,\ell=0$. Clearly, \eqref{eq_ineq10}, \eqref{eq_ineq11}, \eqref{eq_ineq2} are fulfilled. Now \eqref{eq_ineq3} becomes
\[
\left\lfloor \frac{2e-1}{2m} \right \rfloor + \left\lfloor \frac{2d-2e}{2m}  \right\rfloor < 0,
\]
which is true since for $d+1\le e \le m-1$,
$$
\left\lfloor \frac{2e-1}{2m} \right \rfloor =0,\
\left\lfloor \frac{2d-2e}{2m}  \right\rfloor =-1,
$$
and for $e=0$, 
$$
\left\lfloor \frac{2e-1}{2m} \right \rfloor =-1,\ 
\left\lfloor \frac{2d-2e}{2m}  \right\rfloor =0.
$$

\textbf{Case 2}: $e=1$.
\smallskip

Choose $i=2,\ell=1,j=0$. Again \eqref{eq_ineq10}, \eqref{eq_ineq11},  \eqref{eq_ineq2} are fulfilled, while \eqref{eq_ineq3} becomes
\[
\left\lfloor \frac{2m-d+1}{2m} \right \rfloor + \left\lfloor \frac{2d-m-1}{2m}  \right\rfloor < 0.
\]
This is true because
$$
 \left\lfloor \frac{2m-d+1}{2m} \right \rfloor =0,\ 
 \left\lfloor \frac{2d-m-1}{2m}  \right\rfloor = -1,
$$
where the last equality follows from the assumption $d\le m/2$.
\smallskip

\textbf{Case 3}: $2\le e \le d-1$.
\smallskip

Choose $j=0,i=\ell=2$. Once again, we only need to verify \eqref{eq_ineq3}, which becomes\[
\left\lfloor \frac{4m-2d+2e}{2m} \right \rfloor + \left\lfloor \frac{2-2e-2m}{2m}  \right\rfloor < 0.
\]
This is true because
$$
\left\lfloor \frac{4m-2d+2e}{2m} \right \rfloor =1,\
\left\lfloor \frac{2-2e-2m}{2m}  \right\rfloor =-2.
$$

\textbf{Case 4}: $e =d$.
\smallskip

We have to show that the system \eqref{eq_ineq10}--\eqref{eq_ineq2} has no solution. 
\smallskip

First, we show that any solution $(i,j,\ell)$ of the system \eqref{eq_ineq10}--\eqref{eq_ineq2} must satisfy $j=0$.
\par

Note that the sum of two terms inside the integral parts of \eqref{eq_ineq3} are less than $1$. Then
$$(2m-d+1)\ell+2e-i-j +2di-2e-(m+2d-1)\ell-(m-1)j  < 2m.$$
Hence
\begin{equation}
\label{eq_ineq3a}
mj+2m-1 \ge (2d-1)i+(m-3d+2)\ell. 
\end{equation}

If $m < 3d-2$, using the condition $\ell \le i$ we get
\[
mj+2m-1\ge (2d-1)i+(m-3d+2)i \ge (m-d+1)i.
\]
By \eqref{eq_ineq2} we have $i \ge (m+2d-2)j+1$. Therefore,
\[
mj+2m-1 \ge (m-d+1)(m+2d-2)j+(m-d+1).
\]
If $j\ge 1$, this implies
\[
2d^2-3d \ge m^2+m(d-2).
\]
Since $m\ge 2d$, we get
\[
2d^2-3d \ge m^2+m(d-2) \ge 4d^2+2d(d-2)=6d^2-4d.
\]
From this it follows that $d \ge 4d^2$, which gives a contradiction. Therefore, $j=0$ in this case.

If $m\ge 3d-2$, using \eqref{eq_ineq3a} we get
\begin{align*}
mj+2m-1  &\ge (2d-1)((m+2d-2)j+1)+(m-3d+2)\ell\\ 
         &\ge (2d-1)((m+2d-2)j+1).
\end{align*}
This implies
\[
2(m-d) \ge (2d-2)(m+2d-1)j.
\]
On the other hand, we have
\[
(2d-2)(m+2d-1)- 2(m-d) = (2d-4)m+(2d-2)(2d-1)+2d > 0.
\]
Hence, 
\[
(2d-2)(m+2d-1) >  2(m-d) \ge (2d-2)(m+2d-1)j.
\]
From this it follows that $j=0$.
\smallskip

Next, we show that if $(i,j,\ell)$ is a solution of the system \eqref{eq_ineq10}--\eqref{eq_ineq2} with $j =0$, then $\ell \le 3$. 
\smallskip

If $j =0$, \eqref{eq_ineq3a} becomes
\begin{equation}
\label{eq_ineq4}
2m-1 \ge (2d-1)i+(m-3d+2)\ell. 
\end{equation}
As $i\ge \ell$, this implies
\[
2m-1 \ge (2d-1)\ell + (m-3d+2)\ell = (m-d+1)\ell.
\]
If $\ell\ge 4$, we have $2m-1\ge 4(m-d+1)$, which yields $2m\le 4d-5$, a contradiction. Therefore, $\ell\le 3$.\par

Now, we may assume that $j = 0$  and $\ell \le 3$.
\smallskip 

\textbf{Case 4a}: $\ell=0$.
\smallskip 

Then \eqref{eq_ineq3} becomes
\begin{equation}
\label{eq_ineq3_l=0}
\left\lfloor \frac{2d-i}{2m} \right \rfloor + \left\lfloor \frac{2d(i-1)}{2m}  \right\rfloor < 0.
\end{equation}
Since $i\ge 1$ by \eqref{eq_ineq2}, this implies
\[
\left\lfloor \frac{2d-i}{2m} \right \rfloor  \le -1.
\]
Hence, $i > 2d$. If $i\ge 2d+2$, using \eqref{eq_ineq4} we get
\[
 2m-1 \ge (2d-1)i \ge (2d-1)(2d+2)=4d^2+2d-2
\]
Since $d \ge 2$, this implies $m > 2d^2$, a contradiction to the assumption $\sqrt{m/2} \le d$.
Therefore, we must have $i=2d+1$. Hence,  
\begin{align*}
\left\lfloor \frac{2d-i}{2m} \right \rfloor &=\left\lfloor \frac{-1}{2m} \right \rfloor =-1,\\
\left\lfloor \frac{2d(i-1)}{2m}  \right\rfloor &=\left\lfloor \frac{4d^2}{2m}  \right\rfloor \ge 1.
\end{align*}
This shows that \eqref{eq_ineq3_l=0} is not fulfilled.
\smallskip 

\textbf{Case 4b}:  $\ell=1$.
\smallskip 

Then \eqref{eq_ineq3} becomes
\begin{equation}
\label{eq_ineq3_l=1}
\left\lfloor \frac{2m+d+1-i}{2m} \right \rfloor + \left\lfloor \frac{2d(i-2)-m+1}{2m}  \right\rfloor < 0.
\end{equation}
Since $i \ge 1$ and  $m \ge 2d$, 
\[
(2d(i-2)-m+1)+2m\ge m+1-2d>0.
\]
This implies
\[
\frac{2d(i-2)-m+1}{2m} \ge -1.
\]
Hence,  from \eqref{eq_ineq3_l=1} we get
\[
\frac{2m+d+1-i}{2m} <1.
\]
Consequently, $i > d+1$. 
Together with \eqref{eq_ineq4}, it implies
\[
2m-1 \ge (2d-1)(d+2)+m-3d+2.
\]
This yields $m\ge 2d^2+1$, a contradiction to the assumption $\sqrt{m/2} \le d$.
\smallskip 

\textbf{Case 4c}: $\ell=2$.
\smallskip 

Then \eqref{eq_ineq3} becomes
\[
\left\lfloor \frac{2(2m-d+1)+2d-i}{2m} \right \rfloor + \left\lfloor \frac{2di-2d-2(m+2d-1)}{2m}  \right\rfloor < 0.
\]
Hence
\begin{equation}
\label{eq_ineq3_l=2}
\left\lfloor \frac{4m+2-i}{2m} \right \rfloor + \left\lfloor \frac{2d(i-3)-2m+2}{2m}  \right\rfloor < 0.
\end{equation}
From \eqref{eq_ineq4} we get
$$2m-1 \ge (2d-1)i+2(m-3d+2).$$
Hence $(2d-1)i\le 6d-5$. This implies $i <3$. Since $i \ge \ell =2$, we get $i=2$. 
In this case,
$$\left\lfloor \frac{4m+2-i}{2m} \right \rfloor + \left\lfloor \frac{2d(i-3)-2m+2}{2m}  \right\rfloor = \left\lfloor \frac{2m - 2d +2}{2m}  \right\rfloor = 0,$$
which shows that \eqref{eq_ineq3_l=2} is not fulfilled.
\smallskip 

\textbf{Case 4d}: $\ell=3$.
\smallskip 

Then $i\ge 3$ because of \eqref{eq_ineq10}. If $i\ge 4$, from \eqref{eq_ineq4} we get
\[
2m-1 \ge 4(2d-1)+3(m-3d+2).
\]
This yields $m\le d-3$, a contradiction. Therefore, $i=3$. 
Now, \eqref{eq_ineq3} becomes
\[
\left\lfloor \frac{3(2m-d+1)+2d-3}{2m} \right \rfloor + \left\lfloor \frac{6d-2d-3(m+2d-1)}{2m}  \right\rfloor < 0.
\]
Hence,
\[
\left\lfloor \frac{6m-d}{2m} \right \rfloor + \left\lfloor \frac{3-2d-3m}{2m}  \right\rfloor < 0.
\]
This inequality does not hold because
\[
\left\lfloor \frac{6m-d}{2m} \right \rfloor =2,\
\left\lfloor \frac{3-2d-3m}{2m}  \right\rfloor =-2.
\]

So we have seen that the system \eqref{eq_ineq10}--\eqref{eq_ineq2} has no solution if $e=d$. 
This concludes the proof of Lemma \ref{thm_periodic_large_d}.
\end{proof}

\begin{proof}[Proof of Theorem \ref{periodic}]
If $d=0$, the conclusion follows from Lemma \ref{thm_periodic_d=m}.  \par
If $d \le m/2$, we set $m_1= cm$ and $d_1= cd$, where $c =\max\{{\lceil m/(2d^2)\rceil},2\}$. 
It is easy to see that $m_1 \ge  4$, $d_1\ge 2$ and $\sqrt{m_1/2} \le d_1 \le m_1/2$. Consider the ideals
\begin{align*}
M_1 &=(x^{2m_1+1-d_1},y^{m_1+2d_1-1},z^{m_1+2d_1-1},xy^{m_1-1}z)^2,\\
P_1 &=(x^{2m_1},y^{2m_1}),\\
Q_1 &=(z^2).
\end{align*}
By Lemma \ref{thm_periodic_large_d}, $M_1^t \subseteq P_1^t+Q_1^t$ if and only if $t \equiv d_1$ modulo $m_1$. Let 
$M=M_1^c,\ P=P_1^c,\ Q=Q_1^c.$
Then $M^t \subseteq P^t+Q^t$ if and only if $ct \equiv d_1$ modulo $m_1$, which is satisfied if and only if $t \equiv d$ modulo $m$. \par

If $m/2 < d \le m-1$, then $1\le m-d <m/2$.
As above, we can construct ideals $M_2,P_2,Q_2$ such that $M_2^t \subseteq P_2^t+Q_2^t$ if and only if $t \equiv m-d$ modulo $m$. Let $M=M_2^{m-1}, P=P_2^{m-1},$ $Q=Q_2^{m-1}$. Then $M^t \subseteq P^t+Q^t$ if and only if $(m-1)t \equiv m-d$ modulo $m$, which is satisfied if and only if $t \equiv d$ modulo $m$. \par

Now we only need to set $I = (M) \cap (P,u) \cap (Q,v)$ in both cases.
By Proposition \ref{lem_key}, we have 
$$
\depth R/I^{(t)}= \begin{cases}
2 &\text{if $t \equiv d$ modulo $m$},\\
1 &\text{otherwise}.
\end{cases}
$$
The proof of Theorem \ref{periodic} is now complete.
\end{proof}


\section{Manipulation of symbolic depth functions}

In this section we present techniques which allow us to obtain new symbolic depth functions from 
existing symbolic depth functions.

Let $A$ and $B$ be polynomial rings over a field $k$.
Let  $I\subseteq A$ and $J \subseteq B$ be non-zero proper homogeneous ideals. 
Let $R = A\otimes_k B$.
For simplicity, we use the same symbols $I$ and $J$ to denote the usual extensions of $I$ and $J$ in $R$ if we are working in the algebra $R$. 
 
Moreover, we call the {\em unmixed part} of an ideal 
the intersection of the primary components associated to its minimal primes.
By definition, the $t$-th symbolic power is just the unmixed part of the $t$-th ordinary power.

\begin{prop} \label{product}
$\depth R/(IJ)^{(t)} = \depth A/I^{(t)}+\depth B/J^{(t)} +1.$
\end{prop}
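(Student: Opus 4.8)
The plan is to reduce the statement to a Mayer--Vietoris computation for two ideals living in disjoint sets of variables, once the symbolic power of the product has been identified explicitly. Throughout, write $R=A[\underline y]=B[\underline x]$, the polynomial extension of $A$ by the variables $\underline y$ of $B$ and of $B$ by the variables $\underline x$ of $A$, so that $R/I^{(t)}R=(A/I^{(t)})\otimes_k B$, $R/J^{(t)}R=A\otimes_k(B/J^{(t)})$ and $R/(I^{(t)}R+J^{(t)}R)=(A/I^{(t)})\otimes_k(B/J^{(t)})$, where $I^{(t)}$ and $J^{(t)}$ always denote the symbolic powers computed in $A$ and $B$.

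The first step is to prove the ideal-theoretic identity $(IJ)^{(t)}=I^{(t)}R\cap J^{(t)}R$. Since $R$ is a polynomial extension of $A$, each $\mathfrak{m}$-free prime $\mathfrak p R$ with $\mathfrak p\in\Min I$ is prime, $\Min(IR)=\{\mathfrak p R:\mathfrak p\in\Min I\}$, and symbolic powers are compatible with this extension (the $\mathfrak p R$-primary component of $I^tR$ is the extension of the $\mathfrak p$-primary component of $I^t$), so $(IR)^{(t)}=I^{(t)}R$, and symmetrically $(JR)^{(t)}=J^{(t)}R$. Next, $(IJ)^tR=I^tR\cdot J^tR$, whose minimal primes are exactly those of $IR$ together with those of $JR$, with no comparabilities between the two families: $\mathfrak p R\cap B=0$ for $\mathfrak p\in\Min I$, whereas $\mathfrak q R\cap B=\mathfrak q\neq 0$ for $\mathfrak q\in\Min J$ — this is precisely where the hypothesis $I,J\neq 0$ enters. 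Localising at a minimal prime $\mathfrak p R$ of $IR$ makes the factor $J^tR$ a unit, so the $\mathfrak p R$-primary component of $I^tR\cdot J^tR$ equals that of $I^tR$, and symmetrically for $JR$; intersecting all minimal primary components yields $(IJ)^{(t)}=(IR)^{(t)}\cap(JR)^{(t)}=I^{(t)}R\cap J^{(t)}R$.

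The second step is to pass to projective dimensions over $R$ and convert via Auslander--Buchsbaum. Tensoring minimal free resolutions over the field $k$ shows $\pd_R(M\otimes_k N)=\pd_A M+\pd_B N$ for an $A$-module $M$ and a $B$-module $N$ (the tensor complex of free $R$-modules resolves $M\otimes_k N$ by the K\"unneth formula over $k$, and is minimal because its differential has entries in $\mathfrak{m}_A R+\mathfrak{m}_B R$). Hence $\pd R/I^{(t)}R=\pd_A A/I^{(t)}$, $\pd R/J^{(t)}R=\pd_B B/J^{(t)}$, and $\pd R/(I^{(t)}R+J^{(t)}R)=\pd_A A/I^{(t)}+\pd_B B/J^{(t)}$. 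Since $I^{(t)},J^{(t)}\neq 0$ we have $\pd_A A/I^{(t)}\ge 1$ and $\pd_B B/J^{(t)}\ge 1$, so $\pd R/(I^{(t)}R+J^{(t)}R)$ strictly exceeds $\pd R/I^{(t)}R$ and $\pd R/J^{(t)}R$; feeding the short exact sequence
\[
0\to R/(IJ)^{(t)}\to R/I^{(t)}R\oplus R/J^{(t)}R\to R/(I^{(t)}R+J^{(t)}R)\to 0
\]
into the long exact $\ext_R^{\bullet}(-,R)$-sequence then forces $\pd R/(IJ)^{(t)}=\pd_A A/I^{(t)}+\pd_B B/J^{(t)}-1$. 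Applying Auslander--Buchsbaum over $R$, $A$ and $B$ (all polynomial rings, hence of depth equal to their dimension) converts this into $\depth R/(IJ)^{(t)}=\depth A/I^{(t)}+\depth B/J^{(t)}+1$.

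I expect the only genuine obstacle to lie in the first step, namely in showing that the symbolic power of the product splits with \emph{no interaction} between the two factors. This rests on two facts: that symbolic powers commute with the polynomial extensions $A\hookrightarrow R$ and $B\hookrightarrow R$, and that the minimal primes of $IR$ and of $JR$ are pairwise incomparable (which is exactly where $I,J\neq 0$ is used). Once the identity $(IJ)^{(t)}=I^{(t)}R\cap J^{(t)}R$ is in hand, the depth computation is routine. As an alternative to the projective-dimension route, one can read the depth directly off the long exact local cohomology sequence of the same short exact sequence, using the K\"unneth isomorphism $H^n_{\mm}(M\otimes_k N)=\bigoplus_{i+j=n}H^i_{\mathfrak{m}_A}(M)\otimes_k H^j_{\mathfrak{m}_B}(N)$ and checking that the relevant connecting map is injective — but the $\pd$ argument is cleaner.
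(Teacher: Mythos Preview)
Your proof is correct and follows the same two-step skeleton as the paper's: first identify $(IJ)^{(t)}=I^{(t)}R\cap J^{(t)}R$, then compute the depth. The paper, however, outsources both steps to \cite{HT}: it invokes \cite[Lemma~1.1]{HT} for $IJ=I\cap J$ (hence $(IJ)^t=I^t\cap J^t$, whose unmixed part is $I^{(t)}\cap J^{(t)}$), and then quotes \cite[Lemma~3.2]{HT} for the depth formula $\depth R/(I^{(t)}J^{(t)})=\depth A/I^{(t)}+\depth B/J^{(t)}+1$. Your treatment is self-contained: for the first step you argue directly via the incomparability of $\{\mathfrak pR:\mathfrak p\in\Min I\}$ and $\{\mathfrak qR:\mathfrak q\in\Min J\}$ and localization at minimal primes; for the second you run the Mayer--Vietoris sequence through $\Tor$ (or $\Ext$), using that $\pd_R(M\otimes_k N)=\pd_A M+\pd_B N$ and $p,q\ge 1$ to force $\pd R/(IJ)^{(t)}=p+q-1$, and then convert via Auslander--Buchsbaum. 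Both routes are standard; yours has the advantage of not relying on an external reference, while the paper's is shorter. One minor remark: the phrase ``$\mathfrak m$-free prime'' is unclear and unnecessary---you only need that extensions of primes of $A$ along $A\hookrightarrow R=A[\underline y]$ remain prime, which is immediate.
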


\begin{proof}
By \cite[Lemma 1.1]{HT}, we have the formula $IJ = I \cap J$.
Applying this formula to the ideals $I^t$ and $J^t$, we get
$(IJ)^t = I^tJ^t = I^t \cap J^t$. 
Hence, the unmixed part of $(IJ)^t$ is the intersection of the unmixed parts of $I^t$ and $J^t$. 
Since the unmixed parts of $I^t$ and $J^t$ in $R$ are just the extension of those in $A$ and $B$, 
$(IJ)^{(t)} = I^{(t)} \cap J^{(t)} =  I^{(t)}J^{(t)}$.
By \cite[Lemma 3.2]{HT} we have 
$$\depth R/I^{(t)}J^{(t)} = \depth A/I^{(t)}+\depth B/J^{(t)} +1.$$
\end{proof}

Using Proposition \ref{product} we can add up symbolic depth functions to obtain new symbolic depth functions. 
However, the new symbolic depth functions have higher values.
For instance, if $I$ and $J$ are relevant ideals, then $\depth A/I^{(t)} \ge 1$ and $\depth B/J^{(t)} \ge 1$, hence
$\depth R/(IJ)^{(t)} \ge 3.$ The values of the new symbolic depth functions will be even higher if we add up several symbolic depth functions. To get symbolic depth functions with lower values, we need a technique to reduce the depth of symbolic powers.
For that we have to find a Bertini-type theorem in the following sense.

Let $R = k[x_1,...,x_n]$ be a polynomial ring over $k$.
Let $I$ be a homogeneous ideal.
We need to find a linear form $f \in R$ such that for all $t \ge 1$, $f$ is a non-zerodivisor on $I^{(t)}$ and if we set
$S = R/(f)$ and $Q = (I,f)/(f)$, then
$$S/Q^{(t)} = R/(I^{(t)},f).$$

There is an obstacle for such a theorem, namely that $f$ has to be the same element for all symbolic power $I^{(t)}$, which is an infinite family of ideals. 
Such a theorem can be found by using the following construction.

We will replace $R$ by the polynomial ring $R(u) := R \otimes_k k(u)$, where 
$k(u) = k(u_1,...,u_n)$ is a purely transcendental extension of $k$.
Set  
$$f_u := u_1x_1 + \cdots + u_nx_n.$$
We call $f_u$ a {\em generic linear form}.

First, we have to study the unmixed part of the ideal $(I,f_u)$ for an ideal $I$ of $R(u)$.

\begin{lem} \label{transfer}
Assume that $\dim R/I \ge 2$. Then \par
{\rm (i)} The set of the minimal primes of $(I,f_u)$ is the union of the sets of the minimal primes $P$ of $(\wp,f_u)$ with $P \cap R = \wp$, where $\wp$ is a minimal prime of $I$. \par
{\rm (ii)} $(I,f_u)$ is an unmixed ideal if $I$ is an unmixed ideal and $\depth R/I \ge 2$.
\end{lem}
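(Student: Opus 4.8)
**The plan is to analyze $(I, f_u)$ locally at each relevant prime of $R(u)$, using the genericity of $f_u$ to kill all non-minimal primes of $I$ while retaining exactly those minimal primes of $I$ that contribute to the dimension.**

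For part (i), first I would reduce to the case where $I$ is primary (or even prime). Since $\dim R/I \ge 2$ and $f_u$ is a generic linear form, $f_u$ avoids all primes $\mathfrak q$ with $\dim R/\mathfrak q \le 1$: indeed a generic linear form in $R(u)$ lies in no prime $\mathfrak Q$ of $R(u)$ with $\dim R(u)/\mathfrak Q \le 1$ whose contraction to $R$ is such a $\mathfrak q$, because the coefficient vectors avoiding a given proper subspace form a dense open set over the infinite field $k(u)$ (this is the standard ``prime avoidance by a generic form'' argument — one checks it for each of the finitely many $\mathfrak q$ and uses that $u_1,\dots,u_n$ are algebraically independent). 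Consequently a minimal prime $P$ of $(I, f_u)$ has height exactly $\height I + 1$, and $P$ must contain a minimal prime $\wp$ of $I$ (not an embedded one, by the dimension count), and $P$ is then a minimal prime of $(\wp, f_u)$. Conversely any minimal prime $P$ of $(\wp, f_u)$ with $\wp$ a minimal prime of $I$ and $P\cap R = \wp$ has the right height and clearly contains $(I,f_u)$, hence is a minimal prime of $(I,f_u)$. The condition $P \cap R = \wp$ needs justification: if $P$ is a minimal prime of $(\wp, f_u)$ then $P \cap R \supseteq \wp$, and since $f_u$ is a non-zerodivisor mod $\wp$ (again genericity: $f_u \notin \wp R(u)$ as $\dim R/\wp \ge 2 > 0$), $\height P = \height \wp + 1$, which forces $P \cap R = \wp$ by another height comparison. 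I would write this out cleanly.

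For part (ii), assume $I$ is unmixed and $\depth R/I \ge 2$. I want to show every associated prime of $(I, f_u)$ is minimal. By part (i) the minimal primes of $(I, f_u)$ all have height $\height I + 1$, so it suffices to show $\depth R(u)/(I, f_u) \ge \dim R(u)/(I,f_u)$, i.e. that $(I,f_u)$ has no embedded primes; equivalently, that $f_u$ is a non-zerodivisor on $R(u)/I$ and that $\depth R(u)/(I,f_u) = \depth R(u)/I - 1 \ge 1$. The first point — $f_u$ is a non-zerodivisor on $R/I$ — follows because $\depth R/I \ge 2 \ge 1$ means $I$ has no associated prime of dimension $0$, and more to the point, a generic linear form avoids all associated primes of $I$ (all of which are the minimal primes, of dimension $\ge 2$). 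Then $\depth R(u)/(I,f_u) = \depth R(u)/I - 1 = \depth R/I - 1 \ge 1$ (base change to $k(u)$ preserves depth), so $\mathfrak m$ is not associated to $(I,f_u)$; combined with the height count from (i), every associated prime is minimal.

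\textbf{The main obstacle} is making the genericity arguments airtight and uniform: one must simultaneously ensure $f_u$ avoids the finitely many ``bad'' primes (the associated primes of $I$ of dimension $0$ or $1$, and the primes forcing $P\cap R \neq \wp$), and the cleanest way is to observe that each avoidance condition excludes the coefficient vector $(u_1,\dots,u_n)$ from a proper $k$-subspace of $R_1$, and a tuple of independent transcendentals lies in no proper $k$-subspace. I expect the bookkeeping — precisely which finitely many subspaces must be avoided and why avoiding them yields all the claimed height equalities — to be the delicate part, while the homological conclusions ($\depth$ drops by exactly one, base change preserves depth) are routine.
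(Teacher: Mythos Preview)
Your argument for (i) is essentially sound, though be careful: not every minimal prime $\wp$ of $I$ need satisfy $\dim R/\wp \ge 2$, and for those with $\dim R/\wp = 1$ the unique minimal prime of $(\wp, f_u)$ is $\mm R(u)$, which does not contract to $\wp$; one must then check separately that $\mm R(u)$ is not itself minimal over $(I, f_u)$. The paper handles both parts by a different route: it passes to the quotient $R/I$, viewed as a standard graded $k$-algebra, and invokes results of \cite{Tr} on the transfer of ring properties from a graded ring $S$ to $S[u]/(f_u)$ --- in particular, unmixedness is Serre's condition $S_1$, and \cite[Theorem~3.1]{Tr} transfers $S_1$ under the depth hypothesis.

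Your argument for (ii), however, has a genuine gap. You reduce unmixedness of $(I,f_u)$ first to $\depth R(u)/(I,f_u) \ge \dim R(u)/(I,f_u)$ --- but that inequality is Cohen--Macaulayness, strictly stronger than unmixedness --- and then to $\depth R(u)/(I,f_u) \ge 1$. The inequality $\depth \ge 1$ only excludes $\mm R(u)$ from $\Ass\big(R(u)/(I,f_u)\big)$; it says nothing about possible embedded primes $P$ with $\height I + 1 < \height P < n$, and the description of the minimal primes from (i) does not rule these out either. What is missing is a \emph{local} analysis at every non-maximal graded prime $P \supseteq (I,f_u)$: one must show that $(R(u)/(I,f_u))_P$ has positive depth whenever $P$ is not minimal. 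This is where the genericity of $f_u$ does real work --- after inverting a variable $x_i \notin P$ one can eliminate $u_i$ and reduce to the $S_1$ property of $R/I$ itself --- and it is precisely the content of the transfer theorem the paper cites.
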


\begin{proof} 
Let $R[u] = R[u_1,...,u_n]$. Since $R(u)$ is a localization of $R[u]$, 
it suffices to prove (i) and (ii) for ideals in $R[u]$.
Replacing $R$ by the quotient ring $R/I$ we may assume that $I = 0$,
where $R$ is now a standard graded algebra over $k$ with the maximal homogeneous ideal $(x_1,...,x_n)$.
Then we have to prove the following statements under the assumption $\dim R \ge 2$:
 \par
{\rm (i')} The set of the minimal primes of $(f_u)$ is the union of the sets of the minimal primes $P$ of $(\wp,f_u)$ with $P \cap R = \wp$,
where $\wp$ is a minimal prime of $R$. \par
{\rm (ii')} $(f_u)$ is an unmixed ideal if $R$ is unmixed with $\depth R \ge 2$.

The transfer of properties between $R$ und $R[u]/(f_u)$ was already studied in a more general setting in \cite{Tr}. 
In fact, (i') is a consequence of \cite[Lemma 1.5]{Tr}\footnote{There is a typo in \cite[Lemma 1.5]{Tr}. In the formula for $M_1$ one has to replace $\text{Assm}(R)\setminus {\mathcal V}(I_F)$
by $\text{Assm}(R)\cap {\mathcal V}(I_F)$.}
Since a ring is unmixed if and only if it satisfies Serre's condition $S_1$, (ii') follows from
\cite[Theorem 3.1]{Tr}. We leave the reader to check the details.  
\end{proof}

\begin{prop} \label{Bertini}
Let $I$ be an ideal with $\depth R/I^{(t)} \ge 2$ for some $t \ge 1$.
Let $S = R(u)/(f_u)$ and $Q = (I,f_u)/(f_u)$.
Then $f_u$ is a regular element on $I^{(t)}R(u)$  and 
$$S/Q^{(t)} = R(u)/(I^{(t)},f_u).$$
\end{prop}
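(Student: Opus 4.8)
The plan is to break the statement into two assertions: first that $f_u$ is a non-zerodivisor on $R(u)/I^{(t)}R(u)$, and second the commutation of symbolic powers with the hyperplane section. For the first assertion, I would note that since $\depth R/I^{(t)} \ge 2$, extending to the faithfully flat ring $R(u)$ preserves this, so $\depth R(u)/I^{(t)}R(u) \ge 2$; in particular $I^{(t)}R(u)$ is unmixed of dimension $\ge 2$, hence none of its associated primes contains the maximal homogeneous ideal. A generic linear form $f_u = u_1 x_1 + \cdots + u_n x_n$ avoids every associated prime of $I^{(t)}R(u)$: indeed if $\wp$ is such a prime, $\wp \ne \mm$, so some $x_i \notin \wp$, and then $f_u \notin \wp R(u)$ because the $u_i$ are algebraically independent over $R/\wp$. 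Thus $f_u$ is a regular element on $I^{(t)}R(u)$.

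For the commutation statement, write $\bar R = R(u)/I^{(t)}R(u)$ and set $J = I^{(t)}$. The ordinary power issue is easy: $(I,f_u)^t = (I^t, f_u)$ after passing mod $f_u$, so modulo $f_u$ the ordinary powers agree. The content is that taking the \emph{unmixed part} commutes with $\bmod f_u$. Here I would invoke Lemma \ref{transfer}: since $I^{(t)}$ is unmixed with $\depth R/I^{(t)} \ge 2$, part (ii) of that lemma gives that $(I^{(t)}, f_u)$ is unmixed. Moreover part (i) tells us precisely which primes are minimal over $(I^{(t)}, f_u)$: they are the primes $P$ lying over the minimal primes $\wp$ of $I^{(t)}$, and these are exactly the minimal primes of $I$ (since $I$ and $I^{(t)}$ share minimal primes) lifted to $R(u)/(f_u)$. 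On the other hand $Q = (I,f_u)/(f_u) \subseteq S$, and its $t$-th symbolic power $Q^{(t)}$ is by definition the unmixed part of $Q^t = (I^t,f_u)/(f_u)$. So I need to show that $(I^{(t)}, f_u)/(f_u)$, which we have just shown is unmixed, is exactly the intersection of the $P$-primary components of $(I^t, f_u)/(f_u)$ over those $P$.

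The key step — and the one I expect to be the main obstacle — is matching the primary components localization by localization. For each minimal prime $P$ of $(I^t,f_u)/(f_u)$, lying over a minimal prime $\wp$ of $I$, I want to show that localizing $(I^{(t)},f_u)/(f_u)$ at $P$ gives the same ideal as localizing $(I^t,f_u)/(f_u)$ at $P$; since $(I^{(t)},f_u)/(f_u)$ is unmixed this would force it to equal $Q^{(t)}$. Locally at $P$: in $S_P$ we have $(I^t,f_u)_P/(f_u) = (I^t)_P \cdot S_P$, and because $f_u$ is a regular element and $\wp S_{P} = P'$-type behavior, $(I^{(t)})_P = (I^t R_{\wp}) \cap R$ localized, i.e. $I^{(t)} R_\wp = I^t R_\wp$ by definition of symbolic power at a minimal prime. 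So $(I^{(t)})_P S_P = (I^t)_P S_P$ in $S_P$, which is exactly what is needed. The delicate bookkeeping is to justify that $f_u$ stays a non-zerodivisor after all these localizations and that no embedded primes sneak in when passing to $S/(f_u)$ — this is precisely where the hypothesis $\depth R/I^{(t)} \ge 2$ (not merely $\ge 1$) is used, via Lemma \ref{transfer}(ii), and I would spell out that dependence carefully. Once the local identity and the unmixedness of $(I^{(t)},f_u)/(f_u)$ are both in hand, the equality $S/Q^{(t)} = R(u)/(I^{(t)},f_u)$ follows, since two unmixed ideals with the same localizations at all minimal primes coincide.
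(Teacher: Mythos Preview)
Your proposal is correct and follows essentially the same approach as the paper: you use Lemma~\ref{transfer}(i) to identify the minimal primes of $(I^t,f_u)$ and $(I^{(t)},f_u)$, Lemma~\ref{transfer}(ii) to get unmixedness of $(I^{(t)},f_u)$, and then compare localizations at minimal primes via $I^{(t)}R_\wp = I^tR_\wp$. The paper organizes the final step as two inclusions rather than your ``equal localizations at all minimal primes'' formulation, and it makes explicit the passage from $R_\wp$ to $R(u)_P$ (your ``delicate bookkeeping'') by noting that $R(u)_P$ is a localization of $R_\wp(u)$ since $P\cap R=\wp$, but the substance is the same.
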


\begin{proof} 
All associated primes of $I^{(t)}R(u)$ are of the form $\wp R(u)$, where $\wp$ is a minimal primes of $I^t$ in $R$.
Since $\dim R/I^{(t)} \ge \depth R/I^{(t)} \ge 2$, $\wp \neq (x_1,...,x_n)$. Therefore, $f_u \not\in \wp R(u)$.
From this it follows that $f_u$ is a regular element on $I^{(t)}R(u)$.

It is clear that $S/Q^{(t)} = R(u)/(I^t,f_u)^{(1)}$. 
To prove that $S/Q^{(t)} = R(u)/(I^{(t)},f_u)$ we have to prove that 
$(I^t,f_u)^{(1)} = (I^{(t)},f_u).$

Since $I^t$ and $I^{(t)}$ share the same minimal primes, 
so do $(I^t,f_u)$ and $(I^{(t)},f_u)$ by Lemma \ref{transfer}(i). 
This implies $(I^t,f_u)^{(1)} \subseteq (I^{(t)},f_u)^{(1)}$.
By Lemma \ref{transfer}(ii), $(I^{(t)},f_u)^{(1)} = (I^{(t)},f_u)$.
Therefore, $(I^t,f_u)^{(1)} \subseteq (I^{(t)},f_u)$.
It remains to show that $(I^{(t)},f_u) \subseteq (I^t,f_u)^{(1)}$.
For this it suffices to show that $I^{(t)} \subseteq (I^t,f_u)^{(1)}$.
That will be done if we can show that every primary component 
associated with a minimal prime of $(I^t,f_u)$ contains $I^{(t)}$.

Let $P$ be an arbitrary minimal prime of $(I^t,f_u)$.
Then $(I^t,f_u)R(u)_P \cap R(u)$ is the $P$-primary component of $(I^t,f_u)$.
By Lemma \ref{transfer}(i), $P$ is a minimal prime of $(\wp,f_u)$ for some minimal prime $\wp$ of $I^t$ with $P \cap R = \wp$. Since $R \setminus \wp \subset R(u) \setminus P$, $I^tR(u)_P$ is a localization of $I^tR_\wp(u)$, where $R_\wp(u) := R_\wp \otimes_kk(u)$.
Since $I^tR_\wp$ is a primary ideal, so is $I^tR_\wp(u)$.
Hence, $I^tR(u)_P$ is also a primary ideal.
Since $I^{(t)}R(u)_P$ is the unmixed part of $I^tR(u)_P$, this implies $I^{(t)}R(u)_P =I^tR(u)_P$.
Therefore, $I^{(t)} \subseteq I^tR(u)_P \cap R \subseteq (I^t,f_u)R(u)_P \cap R$, as required.
\end{proof}

\begin{cor} \label{lower}
Let $\phi(t)$ be a symbolic depth function over $k$ such that $\phi(t) \ge 2$ for all $t \ge 1$.
Then $\phi(t) - 1$ is also a symbolic depth function over a purely transcendental extension of $k$.
\end{cor}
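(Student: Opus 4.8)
The plan is to realize $\phi$ by an ideal, cut it down by one generic hyperplane, and invoke Proposition~\ref{Bertini} to control all the symbolic powers simultaneously.

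First I would choose a polynomial ring $R=k[x_1,\dots,x_n]$ and a relevant homogeneous ideal $I\subseteq R$ with $\depth R/I^{(t)}=\phi(t)$ for all $t\ge1$. Since $\phi(t)\ge2$ for every $t$, the hypothesis $\depth R/I^{(t)}\ge2$ of Proposition~\ref{Bertini} holds for \emph{all} $t$ at once. Next I would pass to $R(u)=R\otimes_kk(u)$ with $k(u)=k(u_1,\dots,u_n)$, put $f_u=u_1x_1+\cdots+u_nx_n$, and set $S=R(u)/(f_u)$ and $Q=(I,f_u)/(f_u)$. Because $f_u$ is a linear form whose coefficients are units, $S$ is a polynomial ring in $n-1$ variables over $k(u)$, which is a purely transcendental extension of $k$; and $Q$ is a proper homogeneous ideal of $S$ (it lies in the maximal homogeneous ideal, and $Q=0$ is impossible since $I\neq0$ forces $\height I\ge1$, hence $\phi(t)\le n-\height I\le n-1$, whereas $Q=0$ would give $\phi(t)-1=\depth S=n-1$).

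Then, applying Proposition~\ref{Bertini} for each $t\ge1$, I obtain that $f_u$ is a non-zerodivisor on $R(u)/I^{(t)}R(u)$ and that
$$S/Q^{(t)}=R(u)/(I^{(t)},f_u).$$
From the short exact sequence $0\to R(u)/I^{(t)}R(u)\xrightarrow{f_u}R(u)/I^{(t)}R(u)\to R(u)/(I^{(t)},f_u)\to0$ I conclude
$$\depth S/Q^{(t)}=\depth R(u)/(I^{(t)},f_u)=\depth R(u)/I^{(t)}R(u)-1.$$
Finally $R(u)/I^{(t)}R(u)=(R/I^{(t)})\otimes_kk(u)$, and since local cohomology commutes with the flat base change $k\to k(u)$, depth is unchanged: $\depth R(u)/I^{(t)}R(u)=\depth R/I^{(t)}=\phi(t)$. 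Hence $\depth S/Q^{(t)}=\phi(t)-1$ for all $t\ge1$, so $\phi(t)-1$ is a symbolic depth function over the purely transcendental extension $k(u)$ of $k$.

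The only real content here is Proposition~\ref{Bertini} itself, namely that a \emph{single} hyperplane $f_u$ is simultaneously compatible with the whole infinite family $\{I^{(t)}\}_{t\ge1}$ in the sense that $S/Q^{(t)}=R(u)/(I^{(t)},f_u)$; passing to a generic linear form over $k(u)$ is exactly what makes this possible, and that is where the difficulty lies. Granting that proposition, I do not anticipate any further obstacle: the remaining ingredients — that depth drops by exactly one modulo a regular element, and that depth is preserved under the base field extension $k\to k(u)$ — are entirely standard.
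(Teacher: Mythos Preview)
Your proof is correct and follows exactly the paper's approach: realize $\phi$ by an ideal $I$, pass to $R(u)$, cut by the generic linear form $f_u$, and invoke Proposition~\ref{Bertini}. You have simply made explicit the steps the paper compresses into one line (the short exact sequence giving the depth drop, and the invariance of depth under the flat base change $k\to k(u)$); the only additional remark worth adding is that $Q$ is \emph{relevant} in $S$, which follows immediately from $\depth S/Q=\phi(1)-1\ge 1$.
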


\begin{proof}
Let $R$ be a polynomial ring over $k$ and $I$ a homogeneous ideal in $R$ 
such that $\depth R/I^{(t)} = \phi(t)$ for $t \ge 1$. 
Let $S = R(u)/(f_u)$ and $Q = (I,f_u)/(f_u)$, where $f_u$ is a generic linear form.
By Proposition \ref{Bertini} we have
$$\depth S/Q^{(t)} = \depth R/I^{(t)}-1$$
for all $t \ge 1$.
\end{proof}

Now we are able to lower all values of a sum of two symbolic depth functions by one and still get a symbolic depth function.

\begin{cor} \label{additive}
Let $\phi(t)$ and $\psi(t)$ be two symbolic depth functions over a field $k$.
Then $\phi(t) + \psi(t) - 1$ is a symbolic depth function over a purely transcendental extension of $k$.
\end{cor}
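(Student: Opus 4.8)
The plan is to deduce the statement by combining Proposition~\ref{product} with two applications of Corollary~\ref{lower}. Since $\phi(t)$ and $\psi(t)$ are symbolic depth functions over $k$, I would first fix polynomial rings $A$ and $B$ over $k$ together with homogeneous ideals $I\subseteq A$, $J\subseteq B$ satisfying $\depth A/I^{(t)}=\phi(t)$ and $\depth B/J^{(t)}=\psi(t)$ for all $t\ge 1$. Putting $R=A\otimes_k B$ and regarding $I,J$ as extended to $R$, Proposition~\ref{product} yields $\depth R/(IJ)^{(t)}=\phi(t)+\psi(t)+1$ for all $t\ge 1$; hence $\phi(t)+\psi(t)+1$ is itself a symbolic depth function over $k$.

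Next I would use the fact that a symbolic depth function is always positive, so $\phi(t)\ge 1$ and $\psi(t)\ge 1$, whence $\phi(t)+\psi(t)+1\ge 3$ and in particular $\ge 2$ for every $t$. Applying Corollary~\ref{lower} to the symbolic depth function $\phi(t)+\psi(t)+1$ produces a purely transcendental extension $k'$ of $k$ over which $\phi(t)+\psi(t)$ is a symbolic depth function. Since $\phi(t)+\psi(t)\ge 2$ for all $t$, a second application of Corollary~\ref{lower}, now with base field $k'$, gives a purely transcendental extension $k''$ of $k'$ over which $\phi(t)+\psi(t)-1$ is a symbolic depth function, which is the desired conclusion.

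The only point needing a word of justification is the bookkeeping of base fields: each use of Corollary~\ref{lower} passes to a rational function field $k(u_1,\dots,u_n)$ over the current base, and since $k(u_1,\dots,u_n)(v_1,\dots,v_m)=k(u_1,\dots,u_n,v_1,\dots,v_m)$, the composite $k''$ is again a purely transcendental extension of $k$. I do not expect any genuine obstacle here; the entire content of the corollary is carried by Proposition~\ref{product} and the Bertini-type reduction of Corollary~\ref{lower}, and the proof is a short assembly of these two facts.
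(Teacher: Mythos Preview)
Your proof is correct and follows essentially the same route as the paper: you use Proposition~\ref{product} to realize $\phi(t)+\psi(t)+1$ as a symbolic depth function, observe that this is at least $3$, and then apply Corollary~\ref{lower} twice. Your explicit remark about the composite of two purely transcendental extensions being again purely transcendental is a nice touch that the paper leaves implicit.
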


\begin{proof}
Let $A, B$ be two polynomial rings over $k$ and $I \subset A$, $J \subset B$ two homogeneous ideals such that
$\depth A/I^{(t)} = \phi(t)$ and $\depth B/J^{(t)} = \psi(t)$ for $t \ge 1$.
Let $R = A\otimes_k B$. By Proposition \ref{product}, we have
$$\depth R/(IJ)^{(t)} = \phi(t) + \psi(t) + 1$$
for $t \ge 1$. Hence, $\phi(t) + \psi(t) + 1$ is a symbolic depth function over $k$.
Since $\phi(t) \ge 1$ and $\psi(t) \ge 1$, $\depth R/(IJ)^{(t)} \ge 3$ for all $t \ge 1$.
Therefore, we only need to apply Corollary \ref{lower} twice in order to see that  $\phi(t) + \psi(t) - 1$ is a symbolic depth function over a purely transcendental extension of $k$. 
\end{proof}

The symbolic depth function $\phi(t) + \psi(t) - 1$ is the best possible we can get from $\phi(t)$ and $\psi(t)$ by the above method. Namely, $\phi(t) + \psi(t) - 2$ is not always a symbolic depth function.
If there exists $t$ such that $\phi(t) = \psi(t) = 1$, then $\phi(t) + \psi(t) - 2 = 0$ and 0 can not be the depth of  $R/I^{(t)}$ for a relevant ideal $I$.
\par

One may ask whether in the above lemmas, $\phi(t)-1$ and $\phi(t) + \psi(t) - 1$ are symbolic depth functions over $k$. 
We shall see that this question has a positive answer if $k$ is an uncountable field. For that it suffices to prove a Bertini-type theorem like Proposition \ref{Bertini} without extending $k$. 

Let $J$ be an ideal in $R(u)$. For any $\a \in k^n$, we define 
$$J_\a := \{f(\a)|\ f(u) \in J \cap R[u]\}.$$
Obviously, $J_\a$ is an ideal in $R$. We call $J_\a$ the {\it specialization} of $J$ with respect to the substitution $u \to \a$. This notion was first studied by W. Krull and A. Seidenberg (see \cite{Se}). It can be generalized to define a specialization $M_\a$ of a finitely generated module $M$ over $R(u)$ that preserves many properties of $M$ for almost all $\a$, i.e. for all $\a$ in a non-empty Zariski-open subset of $k^n$. We refer the reader to \cite{NT} for details.\footnote{The proof for Proposition 3.2(ii) and (iii) of \cite{NT} has errors, which can be corrected as follows.  For Proposition 3.2(ii), we consider the exact consequence $0 \to M \cap N \to L \to (L/M) \oplus (L/N)$ and apply Corollary 2.5(i) and Lemma 3.1. Proposition 3.2(iii) follows from the exact sequence $M \oplus N \to L \to L/(M+N) \to 0$ by applying Corollary 2.5(ii) and Lemma 3.1.}

\begin{lem} \label{non-zerodivisor}
Let $k$ be an infinite field. Let $J$ be an ideal in $R(u)$ and $f \in k[u,X]$ a regular element on $J$. 
For almost all $\a$, $f(\a,X)$ is a non-zerodivisor on $J_\a$.
\end{lem}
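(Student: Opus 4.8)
We must show: if $k$ is infinite, $J \subseteq R(u)$ an ideal, and $f \in k[u,X]$ a non-zerodivisor on $J$, then for almost all $\a \in k^n$ (i.e.\ for $\a$ in a nonempty Zariski-open subset of $k^n$), the specialized element $f(\a,X)$ is a non-zerodivisor on the specialized ideal $J_\a \subseteq R$.

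The plan is to reduce the statement about non-zerodivisors to a statement about associated primes, and then invoke the specialization machinery of \cite{NT}, which guarantees that formation of specializations commutes with the relevant algebraic operations for almost all $\a$. First I would recall that $f$ is a non-zerodivisor on $J$ precisely when $f$ avoids every associated prime of $J$ in $R(u)$, equivalently when the map $R(u)/J \xrightarrow{\cdot f} R(u)/J$ is injective, equivalently when $(J :_{R(u)} f) = J$. So the natural route is: consider the colon ideal $L := (J :_{R(u)} f)$. By hypothesis $L = J$. The key input from \cite{NT} is that specialization commutes with colon ideals for almost all $\a$, i.e.\ $(J :_{R(u)} f)_\a = (J_\a :_R f(\a,X))$ for $\a$ in a nonempty Zariski-open set $U_1 \subseteq k^n$; combined with the (also almost-all) commutation $(J)_\a = J_\a$ — which is essentially the definition — and $L_\a = J_\a$ on some open $U_2$, we get on $U_1 \cap U_2$ that $(J_\a :_R f(\a,X)) = J_\a$, which is exactly the assertion that $f(\a,X)$ is a non-zerodivisor on $J_\a$. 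Since $k$ is infinite, $U_1 \cap U_2$ contains a $k$-rational point, so "almost all $\a$" is nonvacuous.

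The steps, in order, are: (1) rephrase "non-zerodivisor on $J$" as the colon-ideal identity $(J : f) = J$ in $R(u)$; (2) quote from \cite{NT} (with the erratum in the footnote to \cite{NT} kept in mind) the statement that for a finitely generated module, or in our case the cyclic module $R(u)/J$, and for $f \in k[u,X]$, one has $(J : f)_\a = (J_\a : f(\a,X))$ for almost all $\a$, together with $J_\a$ being well-defined and behaving correctly (e.g.\ $R/J_\a$ is the specialization of $R(u)/J$); (3) intersect the finitely many nonempty Zariski-open conditions coming from these specialization results to obtain a single nonempty open $U$; (4) conclude that for $\a \in U$, $(J_\a : f(\a,X)) = (J : f)_\a = J_\a$, i.e.\ $f(\a,X)$ is a non-zerodivisor on $J_\a$; (5) note $U(k) \neq \emptyset$ because $k$ is infinite.

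The main obstacle — and the place where care is genuinely needed — is step (2): making sure the cited specialization results actually apply in the form required. One must check that $f \in k[u,X]$ (rather than a general element of $R(u)$) is an allowable "numerator," that the colon-commutation statement in \cite{NT} is stated for an ideal divided by a ring element and not merely for modules under maps, and in particular that one is on the correct side of the erratum noted in the paper's footnote (the correction to Proposition 3.2(ii),(iii) of \cite{NT} via the exact sequences $0 \to M \cap N \to L \to (L/M)\oplus(L/N)$ and $M \oplus N \to L \to L/(M+N)\to 0$). If \cite{NT} only gives the commutation of specialization with annihilators or with the module structure map, one would instead argue: $f(\a,X)$ is a zerodivisor on $J_\a$ iff it lies in some $\pp \in \Ass_R(R/J_\a)$; by the specialization of associated primes in \cite{NT}, for almost all $\a$ the set $\Ass_R(R/J_\a)$ consists exactly of the specializations $\qq_\a$ of the primes $\qq \in \Ass_{R(u)}(R(u)/J)$; since $f \notin \qq$ for every such $\qq$ (as $f$ is a non-zerodivisor on $J$), a degree/genericity argument plus the fact that $f$'s coefficients are among the $u_i$ forces $f(\a,X) \notin \qq_\a$ for almost all $\a$ — one only has to further shrink $U$ along the finitely many primes $\qq$. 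Either formulation works; the bookkeeping of "finitely many nonempty opens, hence nonempty open" is routine once the underlying specialization theorems are pinned down.
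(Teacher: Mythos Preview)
Your proposal is correct and follows essentially the same route as the paper: rewrite ``$f$ is a non-zerodivisor on $J$'' as the colon identity $J:f = J$, then invoke the specialization results of \cite{NT} to conclude $(J_\a : f(\a,X)) = J_\a$ for almost all $\a$. The paper cites specifically \cite[Proposition 3.2(i) and Proposition 3.6]{NT} and passes through the quotient module, computing $0 = ((J:f)/J)_\a = (J:f)_\a/J_\a = (J_\a : f(\a,X))/J_\a$; in particular the erratum you worry about concerns only parts (ii) and (iii) of Proposition 3.2 and is not needed here.
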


\begin{proof}
We have $J: f = J$. By \cite[Proposition 3.2(i) and Proposition 3.6]{NT}, for almost all $\a$,
$$0 = (J:f/J)_\a = (J:f)_\a/J_\a = (J_\a: f(\a,X))/J_\a,$$ 
which implies $J_\a: f(\a,X) = J_\a$. 
\end{proof}

\begin{lem} \label{unmixed2}
Let $k$ be an infinite field. Let $J$ be an ideal in $R(u)$ and $U$ the unmixed part of $J$. 
For almost all $\a$, $U_\a$ is the unmixed part of $J_\a$.
\end{lem}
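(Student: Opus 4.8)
The plan is to reduce everything to a primary decomposition of $J$ and then track the individual primary components under specialization, using the compatibility of specialization with intersection, sum, radical and dimension provided by \cite{NT} (with the correction to \cite[Proposition 3.2(ii),(iii)]{NT} noted above). Write $J=Q_1\cap\cdots\cap Q_r$ for an irredundant primary decomposition in $R(u)$, put $\wp_i=\sqrt{Q_i}$, and order the indices so that $\wp_1,\dots,\wp_s$ are exactly the minimal primes of $J$; then $U=Q_1\cap\cdots\cap Q_s$, independently of the decomposition chosen, since these are the isolated components. The first step is to choose a non-empty Zariski-open subset of $k^n$ on which, simultaneously for all $\a$ in it: $J_\a=\bigcap_{i=1}^r(Q_i)_\a$ and $U_\a=\bigcap_{i=1}^s(Q_i)_\a$ (corrected \cite[Proposition 3.2(ii)]{NT}); $(\wp_i+\wp_j)_\a=(\wp_i)_\a+(\wp_j)_\a$ for all $i,j$ (corrected \cite[Proposition 3.2(iii)]{NT}); $\sqrt{(Q_i)_\a}=(\wp_i)_\a$ and $\dim R/(\wp_i)_\a=\dim R(u)/\wp_i$; and, crucially, each $(Q_i)_\a$ is an unmixed, equidimensional ideal of dimension $\dim R(u)/\wp_i$, so that $\Ass(R/(Q_i)_\a)=\Min((Q_i)_\a)=\Min((\wp_i)_\a)$ with all these primes of dimension $\dim R(u)/\wp_i$. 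Fix such an $\a$ for the remainder.

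Next I would record a genericity remark: if $\wp,\wp'$ are incomparable among the $\wp_i$, then no minimal prime of $(\wp)_\a$ is contained in a minimal prime of $(\wp')_\a$. Indeed, such a containment would yield a prime $P'$, minimal over $(\wp')_\a$, with $P'\supseteq(\wp)_\a+(\wp')_\a=(\wp+\wp')_\a$, whence $\dim R/P'\le\dim R/(\wp+\wp')_\a=\dim R(u)/(\wp+\wp')\le\dim R(u)/\wp'-1$ (the last step because $\wp\not\subseteq\wp'$ forces $\wp'$ to lie strictly inside every minimal prime of $\wp+\wp'$); this contradicts $\dim R/P'=\dim R(u)/\wp'$, which holds by equidimensionality of $(\wp')_\a$. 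From $\sqrt{J_\a}=\bigcap_{i=1}^r(\wp_i)_\a$ and this remark one then deduces $\Min(J_\a)=\bigcup_{j\le s}\Min((\wp_j)_\a)$ with these primes pairwise incomparable: for an embedded $\wp_i$, pick a minimal $\wp_j\subsetneq\wp_i$ with $j\le s$; then $(\wp_j)_\a\subseteq(\wp_i)_\a$, and each $P\in\Min((\wp_i)_\a)$ contains a $P'\in\Min((\wp_j)_\a)$ with $\dim R/P'=\dim R(u)/\wp_j>\dim R(u)/\wp_i=\dim R/P$, so $P'\subsetneq P$ and $P$ is not minimal over $J_\a$; the incomparability of the surviving primes is the remark applied to distinct minimal $\wp_j,\wp_{j'}$.

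With this in hand, the conclusion that $U_\a$ equals the unmixed part of $J_\a$ splits into two checks. First, $\Ass(R/U_\a)=\Min(J_\a)$: the inclusion $\subseteq$ follows from $U_\a=\bigcap_{j\le s}(Q_j)_\a$ and $\Ass(R/(Q_j)_\a)=\Min((\wp_j)_\a)$; for the reverse, any $P\in\Min(J_\a)$ satisfies $U_\a\subseteq(Q_j)_\a\subseteq P$ for the $j\le s$ with $P\in\Min((\wp_j)_\a)$, and $P$ is minimal over $U_\a$ because a prime strictly inside $P$ containing $U_\a$ would contain some $(\wp_{j'})_\a$, hence a prime of $\bigcup_{j\le s}\Min((\wp_j)_\a)$ strictly below $P$, contradicting the incomparability above; the two inclusions give equality, so $U_\a$ is unmixed with the correct minimal primes. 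Second, for each $P\in\Min(J_\a)$ the $P$-primary components of $J_\a$ and $U_\a$ coincide: localizing $J_\a=\bigcap_{i=1}^r(Q_i)_\a$ and $U_\a=\bigcap_{i\le s}(Q_i)_\a$ at $P$, only the factors with $(\wp_i)_\a\subseteq P$ survive, and for $i>s$ one has $P\not\supseteq(\wp_i)_\a$ (otherwise, with $\wp_j\subsetneq\wp_i$ minimal, $P$ would contain a prime of $\Min((\wp_j)_\a)$ of strictly larger dimension, impossible for $P\in\Min(J_\a)$), so $(J_\a)R_P=(U_\a)R_P$. Hence $U_\a$ is exactly the unmixed part of $J_\a$ for every $\a$ in the chosen open set.

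I expect the main obstacle to be not the bookkeeping above but pinning down the required specialization inputs, and in particular the one genuinely substantive fact: that for almost all $\a$ the specialization of a prime $\wp$ of $R(u)$ is an unmixed and equidimensional ideal of dimension $\dim R(u)/\wp=\dim R/(\wp)_\a$. This is a standard feature of the Krull--Seidenberg specialization theory (generic flatness and equidimensionality of generic fibres) and is available in \cite{NT}; granting it, together with the compatibility of specialization with finite intersections, sums and radicals, the rest is the combinatorial argument above. Note that when $J$ is equidimensional the genericity remark of the second paragraph is automatic, since a strict inclusion of primes in the Noetherian ring $R(u)$ drops dimension.
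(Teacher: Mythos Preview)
Your proof is correct and follows the same strategy as the paper: take a primary decomposition of $J$, use compatibility of specialization with finite intersections to identify $J_\a$ and $U_\a$, and then use height/dimension preservation under specialization (the paper cites Seidenberg \cite[Appendix, Theorem~6]{Se} directly for this) to show that the components $(Q_i)_\a$ with $i>s$ contribute only non-minimal primes. Your incomparability remark via specialization of sums and the explicit localization check are additional bookkeeping not spelled out in the paper's shorter argument, but the route is essentially the same.
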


\begin{proof}
Let $J = Q_1 \cap \cdots \cap Q_r$ be a primary decomposition of $J$. Let $P_i = \sqrt{Q_i}$, $i = 1,...,r$.
Suppose that $P_1,...,P_m$, $m \le r$, are the minimal associated primes of $J$. Then
$U = Q_1 \cap \cdots \cap Q_m$. By \cite[Proposition 3.2(ii)]{NT}, $J_\a = (Q_1)_\a \cap \cdots \cap (Q_r)_\a$ and $U_\a = (Q_1)_\a \cap \cdots \cap (Q_m)_\a$
for almost all $\a$.  

For $i = 1,...,m$, every associated primes of $(P_i)_\a$ and $(Q_i)_\a$ have the same height like $P_i$ and $Q_i$ for almost all $\a$ \cite[Appendix, Theorem 6]{Se}.  
Since $(Q_i)_\a \subseteq (P_i)_\a \subseteq \sqrt{(Q_i)_\a}$, we have
$\sqrt{(Q_i)_\a} = \sqrt{(P_i)_\a}$.
Therefore, $(Q_i)_\a$ and $(P_i)_\a$ share the same associated primes.

For $i = m+1,...,r$, there exists $j \le m$ such that $P_i \supset Q_j$ with $\height P_i > \height Q_j$.
This implies $\height (P_i)_\a  >  \height (Q_j)_\a$ for almost all $\a$.
Since $(P_i)_\a \supset (Q_j)_\a$, every associated prime of $(P_i)_\a$ properly contains a minimal associated prime of $(Q_j)_\a$.
From this it follows that the associated primes of $(Q_i)_\a$ are non-minimal associated primes of $J_\a$ for $i = m+1,...,r$.
Thus, the minimal associated primes of $(Q_1)_\a,...,(Q_m)_\a$ are precisely the minimal associated primes of $J_\a$. Hence, $U_\a$ is the unmixed part of $J_\a$ for almost all $\a$.
\end{proof}

For $\a = (a_1,...,a_n) \in k^n$, we set $f_\a := a_1x_1 + \cdots + a_nx_n$.

\begin{prop} \label{uncountable}
Let $k$ be an uncountable field. Let $I$ be a homogeneous ideal in $R$ with $\depth R/I^{(t)} \ge 2$ for all $t \ge 1$.
Let $S = R/(f_\a)$ and $Q_\a = (I,f_\a)/(f_\a)$. Then there is $\a \in k^n$ such that for all $t \ge 1$, 
$f_\a$ is a regular element on $I^{(t)}$ and 
$$S/(Q_\a)^{(t)} = R/(I^{(t)},f_\a).$$
\end{prop}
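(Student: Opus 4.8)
The plan is to descend from the generic statement Proposition~\ref{Bertini} to a specialization at a concrete point $\a \in k^n$, using the specialization machinery of Lemmas~\ref{non-zerodivisor} and \ref{unmixed2}, and then to invoke the uncountability of $k$ to intersect countably many ``almost all $\a$'' conditions into a single non-empty set. First I would set $J^{(t)} := I^{(t)}R(u)$ for each $t \ge 1$, observe (as in the proof of Proposition~\ref{Bertini}) that $I^{(t)}R(u)$ is exactly the unmixed part of $I^t R(u)$, and note that since $I$ is defined over $k$ we have $\bigl(I^t R(u)\bigr)_\a = I^t$ and $\bigl(I^{(t)}R(u)\bigr)_\a = I^{(t)}$ for the trivial reason that these ideals already live in $R[u]$ as extensions from $R$ (every generator specializes to itself). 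So the content is entirely in how $f_u$ behaves under $u \to \a$.

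The key steps, in order: (1) By Lemma~\ref{non-zerodivisor} applied to $J = I^{(t)}R(u)$ and the regular element $f_u$ (regular by Proposition~\ref{Bertini}), there is a non-empty Zariski-open set $V_t \subseteq k^n$ such that $f_\a$ is a non-zerodivisor on $I^{(t)}$ for $\a \in V_t$. (2) By Proposition~\ref{Bertini}, $(I^{(t)},f_u) = (I^t,f_u)^{(1)}$ is the unmixed part of $(I^t,f_u)$ in $R(u)$; by Lemma~\ref{unmixed2} there is a non-empty Zariski-open set $W_t \subseteq k^n$ such that $\bigl((I^{(t)},f_u)\bigr)_\a$ is the unmixed part of $\bigl((I^t,f_u)\bigr)_\a$ for $\a \in W_t$. (3) Using that specialization commutes with sums of ideals (Proposition~3.2 of \cite{NT}, as corrected in the footnote) and that $(I^t)_\a = I^t$, $(f_u)_\a = f_\a$, one gets $\bigl((I^t,f_u)\bigr)_\a = (I^t,f_\a)$ and $\bigl((I^{(t)},f_u)\bigr)_\a = (I^{(t)},f_\a)$; combining with (2), for $\a \in W_t$ the ideal $(I^{(t)},f_\a)$ is the unmixed part of $(I^t,f_\a)$, i.e. $(I^t,f_\a)^{(1)} = (I^{(t)},f_\a)$, which is precisely the asserted equality $S/(Q_\a)^{(t)} = R/(I^{(t)},f_\a)$. (4) Finally, set $U_t := V_t \cap W_t$, a non-empty Zariski-open subset of $k^n$. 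Each $U_t$ is the complement of a proper Zariski-closed set, hence of a countable union of proper hyperplanes' worth of points — more precisely, $k^n \setminus U_t$ is contained in the zero set of a nonzero polynomial. Since $k$ is uncountable, a single nonzero polynomial cannot vanish on a set whose complement is covered by countably many proper closed subsets; standard argument gives $\bigcap_{t \ge 1} U_t \neq \emptyset$. Pick any $\a$ in this intersection.

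The main obstacle I anticipate is step (4): justifying that the countable intersection $\bigcap_{t\ge 1} U_t$ is non-empty. Over an uncountable field this is a routine Baire-category-type fact — the complement of each $U_t$ is contained in a proper closed subvariety $Z_t$ of $\mathbb{A}^n_k$, and a finite-dimensional affine space over an uncountable field is not a countable union of proper closed subvarieties (one can, e.g., restrict to a generic line and use that a nonzero single-variable polynomial has finitely many roots while $k$ is infinite, then iterate, or cite the standard statement directly). I would spell this out carefully since it is the only place the uncountability hypothesis is used, and it is exactly the reason the conclusion here (one $\a \in k^n$ working for \emph{all} $t$ simultaneously) cannot be obtained over an arbitrary infinite field by this method. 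The remaining steps are bookkeeping: checking that the specialization operations in \cite{NT} apply to the specific ideals $I^t R(u)$, $I^{(t)}R(u)$, $(I^t,f_u)$ and that they interact with extension-from-$k$ as claimed, which is immediate because those ideals are extended from $R[u]$ in the relevant variables.
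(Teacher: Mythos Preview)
Your proposal is correct and follows essentially the same approach as the paper's proof: invoke Proposition~\ref{Bertini} for the generic linear form, apply Lemmas~\ref{non-zerodivisor} and~\ref{unmixed2} (together with the compatibility of specialization with sums from \cite{NT}) to obtain for each $t$ a non-empty Zariski-open set $U_t \subseteq k^n$ on which both conclusions hold, and then use uncountability of $k$ to get $\bigcap_{t\ge 1} U_t \neq \emptyset$. The only cosmetic difference is that the paper cites \cite[Lemma~3.1]{Nh} for the countable-intersection step, whereas you sketch the Baire-type argument directly; also, the paper phrases the specialization identity $(I^t,f_u)_\a = (I^t,f_\a)$ as holding ``for almost all $\a$'' rather than unpacking it via \cite[Proposition~3.2]{NT} as you do.
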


\begin{proof}
By Proposition \ref{Bertini}, $f_u$ is a regular element on $I^{(t)}$ and $(I^{(t)},f_u)$ is the unmixed part of $(I^t,f_u)$. 
It is easy to check that $(I^t,f_\a) = (I^t,f_u)_\a$ for almost all $\a$.
By Lemma \ref{non-zerodivisor} and Lemma \ref{unmixed2}, there exists a non-empty Zariski-open set $U_t$ in $k^n$ such that $f_\a$ is a regular element on $I^{(t)}$ and $(I^{(t)},f_\a)$ is the unmixed part of $(I^t,f_\a)$ for all $\a \in U_t$.
Since $k$ is an uncountable field, $\bigcap_{t\ge 1} U_t$ is uncountable (see e.g. \cite[Lemma 3.1]{Nh}).
Therefore, there is $\a \in k^n$ such that for all $t \ge 1$, $f_\a$ is a regular element on $I^{(t)}R$ and 
$S/(Q_\a)^{(t)} = R(u)/(I^{(t)},f_\a).$ 
\end{proof} 

With regard to Proposition \ref{uncountable} we raise the following question.

\begin{quest} 
Let $R$ be a polynomial over an infinite field and $I$ a homogeneous ideal in $R$ with $\depth R/I^{(t)} \ge 2$ for all $t \ge 1$. Does there exist a linear form $f \in R$ such that $f$ is a regular element on $I^{(t)}$ and
if we set $S = R/(f)$ and $Q = (I,f)/(f)$, then
$$S/Q^{(t)} = R/(I^{(t)},f)$$
for all $t \ge 1$?
\end{quest}

This question has a negative answer if the base field is finite.

\begin{ex}
Let $k$ be a finite field and $R = k[x_1,...,x_n]$, $n \ge 3$. Then $R$ has only finitely many linear forms.
Let $I$ be the principal ideal generated by the product of all linear forms of $R$. Then $I^{(t)} = I^t$ and
$\depth R/I^t = n-1 \ge 2$ for all $t \ge 1$. It is clear that any linear form $f \in R$ is not a zerodivisor on $I^t$ for all $t \ge 1$.
\end{ex}


\section{Ubiquity of asymptotically periodic symbolic depth functions}

In this section we prove the following result, which is the main contribution of this paper.

\begin{thm} \label{ubiquity}
Let $\phi(t)$ be any asymptotically periodic positive numerical function. Given a field $k$,
there exist a polynomial ring $R$ over a purely transcendental extension of $k$ and a homogeneous ideal $I \subset R$ in such that $\depth R/I^{(t)}=\phi(t)$ for $t\ge 1$.
\end{thm}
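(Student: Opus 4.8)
The plan is to realize $\phi(t)$ by combining the basic building blocks from Section~4 via the manipulation tools of Section~5. Write $\phi(t)$ for an asymptotically periodic positive numerical function, say with eventual period $p$ starting from some index $t_0$, and let $m = \max_{t\ge 1}\phi(t)$. The first step is a normalization: since every symbolic depth function is positive, and since the operations available (adding symbolic depth functions via Proposition~\ref{product}, lowering by $1$ via Corollary~\ref{lower}) shift values up and down in controlled ways, it suffices to produce, for each fixed "level" $j$ with $1\le j\le m$, a symbolic depth function $\chi_j(t)$ that takes the value $2$ exactly on the set $\{t : \phi(t)\ge j\}$ and the value $1$ elsewhere. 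Indeed, if we have such $\chi_1,\dots,\chi_m$, then by iterating Corollary~\ref{additive} the function $\chi_1(t)+\cdots+\chi_m(t)-(m-1)$ is a symbolic depth function (over a purely transcendental extension of $k$), and at each $t$ its value is $\#\{j : \chi_j(t)=2\} \cdot 2 + \#\{j : \chi_j(t)=1\}\cdot 1 - (m-1)$; choosing the bookkeeping so that exactly $\phi(t)-1$ of the $\chi_j$ equal $2$ and the remaining $m-\phi(t)+1$ equal $1$ (which is arranged precisely by the "threshold" description $\chi_j(t)=2 \iff \phi(t)\ge j$, after a shift of indices), this telescopes to $\phi(t)$. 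So the whole problem reduces to building each threshold function $\chi_j$, which is a $\{1,2\}$-valued asymptotically periodic function with prescribed support.

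The second step is therefore to show that \emph{every} $\{1,2\}$-valued asymptotically periodic function is a symbolic depth function over $k$. Here is where Section~4 does the work. By Proposition~\ref{lem_key}, producing such a function amounts to producing primary monomial ideals $M,P,Q$ in $k[x,y,z,u,v]$ with the prescribed radicals such that the containment $M^t\subseteq P^t+Q^t$ holds exactly on the desired support; Lemmas~\ref{type1}, \ref{type2}, \ref{thm_periodic_d=m}, \ref{thm_periodic_large_d} and Theorem~\ref{periodic} already give us, as raw material, the depth functions of types \textbf{A} (a single up-step $1,\dots,1,2,2,\dots$), \textbf{B} (value $2$ at a single position), and \textbf{C} (value $2$ on a single residue class mod $m$). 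A general $\{1,2\}$-valued asymptotically periodic function is the pointwise maximum of finitely many functions of types \textbf{B} and \textbf{C} (finitely many isolated $2$'s in the transient part, plus the residue classes mod $p$ on which the value is eventually $2$; and if the eventual value is constantly $2$ we also include a type-\textbf{A} function). Thus I need a "max" or "or" operation on symbolic depth functions valued in $\{1,2\}$. This is \emph{not} literally pointwise max of depths, but it is realized at the ideal level: given two ideals $I_1 = M_1\cap(P_1,u)\cap(Q_1,v)$ and $I_2$ of the shape in Proposition~\ref{lem_key}, I would take a common presentation—replace $M_1,M_2$ by powers so they live with compatible exponents, then form $M = M_1\cdot M_2$ (or a suitable ideal with $M^t = M_1^t\cap M_2^t$ after passing to symbolic powers), $P = P_1\cdot P_2$, $Q=Q_1\cdot Q_2$, and observe that $M^t\subseteq P^t+Q^t$ will fail whenever \emph{either} $M_i^t\not\subseteq P_i^t+Q_i^t$ fails, giving $\depth R/I^{(t)}=2$ iff \emph{both} sub-conditions hold, i.e. the "and" of the two level-$2$ supports. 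Combined with type-\textbf{B} ideals (whose support is a single point) and their "duals", and with the residue-class ideals from Theorem~\ref{periodic}, intersecting (= "and"-ing) enough of them gives any prescribed eventually-periodic $\{1,2\}$-valued pattern, which is exactly a threshold function $\chi_j$ up to relabeling $2\leftrightarrow 1$ on a complement — and the complement of an asymptotically periodic $\{1,2\}$-pattern is again one of the same form, so it is in the toolbox.

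The main obstacle I anticipate is precisely making the "and"/"or" step rigorous: Proposition~\ref{lem_key} is stated for a fixed triple $(M,P,Q)$ in five variables with those three specific radicals, and one must check that the composite ideals $M=M_1M_2$, $P=P_1P_2$, $Q=Q_1Q_2$ are still primary with the correct radicals, that $Q^{(t)}=Q^t$ (needed so $I^{(t)}=M^t\cap(P,u)^t\cap(Q,v)^t$ via Proposition~\ref{primary}/Lemma~\ref{decomposition}), and that the containment $M^t\subseteq P^t+Q^t$ genuinely decouples as the conjunction of the two containments — the latter requires care because $P^t+Q^t$ for the product ideals is not simply built from the pieces. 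A cleaner route, which I would adopt if the direct product fails, is to instead realize the "and" by working in a larger polynomial ring: place one copy of the Proposition~\ref{lem_key} configuration on variables $x,y,z,u,v$ and a second on disjoint variables $x',y',z',u',v'$, and intersect the two ideals; then redo the degree-complex / connectedness analysis of Proposition~\ref{depth2} and Proposition~\ref{lem_key} for the intersection, where the nerve argument (as in the proof of Theorem~\ref{prop_example_increasing}) shows $\depth = 2$ iff both containments hold. Once the "and" operation is established, assembling $\phi$ is routine combinatorics on finitely many residue classes plus finitely many transient corrections, followed by the telescoping sum above and $m-1$ applications of Corollary~\ref{additive}, which forces the passage to a purely transcendental extension of $k$ exactly as in the statement.
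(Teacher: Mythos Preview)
Your overall strategy—reduce to $\{1,2\}$-valued functions and then combine the basic types {\bf A}, {\bf B}, {\bf C}—matches the paper's, and Step~1 (the threshold decomposition, up to the index shift you flag) is fine and is essentially how the paper reduces to 0-1 functions via the bijection $\phi\mapsto\overline\phi$. The genuine gap is in Step~2: your mechanism for realizing ``and''/``or'' on $\{1,2\}$-valued symbolic depth functions does not work as stated.

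Concretely: you correctly observe that you need the pointwise \emph{maximum} (``or'') of the basic functions, since an arbitrary $\{1,2\}$-valued support is a union of singletons and residue classes; but the construction you then describe produces, at best, the pointwise \emph{minimum} (``and''). You try to pass between the two by ``relabeling $2\leftrightarrow1$ on a complement'', but there is no operation in the toolkit that implements complementation---knowing that the complement of an asymptotically periodic $\{1,2\}$-pattern is again asymptotically periodic does not tell you it is a \emph{symbolic depth function} without already assuming the theorem. The two ideal-level constructions you propose also fail. For the product $M=M_1M_2$, $P=P_1P_2$, $Q=Q_1Q_2$ there is no reason for the containment $M^t\subseteq P^t+Q^t$ to decouple as a conjunction of the two separate containments (indeed $(P_1P_2)^t+(Q_1Q_2)^t$ bears no simple relation to the $P_i^t+Q_i^t$), and $P,Q$ need not even be primary with the required radicals. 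Placing the two configurations on disjoint variable sets and intersecting lands you exactly in Proposition~\ref{product}, which gives $\depth_1+\depth_2+1$, i.e.\ the operation $\star$ up to a shift, not a Boolean min or max.

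What the paper does instead (Lemma~\ref{simple}) is sidestep any genuine Boolean operation. Working in the 0-1 picture, it writes a 0-1 asymptotically periodic function $\psi$ as
\[
\psi=(\psi_1+\psi_2-1)+\psi_3,
\]
where $\psi_1$ is the periodic extension of the tail (a sum of type-{\bf C}$'$ functions), $\psi_3$ records the transient part of $\psi$ (a sum of type-{\bf B}$'$ functions), and $\psi_2$ is chosen to equal $1$ wherever $\psi_1=0$ on the transient window and identically $1$ beyond it (one type-{\bf A}$'$ plus some type-{\bf B}$'$). The point is that $\psi_1+\psi_2\ge 1$ everywhere \emph{by construction}, so the ``$-1$'' is legal: in the positive picture it is exactly $\overline{\Phi_1\star\Phi_2}$ with $\Phi_1\star\Phi_2\ge 2$. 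Thus the entire decomposition uses only ordinary addition and a single subtraction-by-$1$, i.e.\ only $\star$ and $\overline{\,\cdot\,}$, the two operations you already have from Section~5. No ideal-level ``and''/``or'' is needed. If you replace your Step~2 by this decomposition, the rest of your outline goes through.
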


The idea is to build up any asymptotically periodic positive numerical function from basic symbolic depth functions by using the operations 
\begin{align*}
\overline \phi(t) & := \phi(t)-1,\\
(\phi\star\psi)(t) & := \phi(t)+\psi(t)-1
\end{align*}
By Corollary \ref{lower} and Corollary \ref{additive}, if $\phi(t)$ is a symbolic depth function with $\phi(t) \ge 2$ for all $t \ge 1$, then $\overline \phi(t)$ is a symbolic depth function, and if $\phi(t)$ and $\psi(t)$ are arbitrary symbolic depth functions, then $(\phi\star\psi)(t)$ is a symbolic depth function.
\par

The basic symbolic depth functions are functions of the following types, whose existence has been shown in Section 4.
\par

\begin{enumerate} 
\item[{\bf A}:] 1,...,1,2,2,... , which is a monotone function converging to 2,
\item[{\bf B}:] 1,...,1,2,1,1,... , which has the value 2 at only one position, 
\item[{\bf C}:] 1,1,1,... or 1,..,1,2,1,..,1,1,..,1,2,1,..,1,... , which is a periodic function with a period of the form 1,..,1,2,1,..,1, where 2 can be at any position.
\end{enumerate}

\begin{lem} \label{simple}
Any asymptotically periodic positive numerical function is obtained from finitely many functions of types {\bf A, B, C} by using the operations $\overline \phi$ with $\phi(t) \ge 2$ for all $t \ge 1$ and $\phi\star\psi$.
\end{lem}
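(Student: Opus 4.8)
The plan is to produce every such $\phi$ explicitly in the shape
\[
\phi=\overline{\,\cdots\,\overline{\,f_1\star f_2\star\cdots\star f_N\,}\,}\qquad(c+1\ \text{bars}),
\]
where $N\ge 2$, each $f_i$ is a basic function of type {\bf A}, {\bf B} or {\bf C}, and $c\ge 0$. Since $\star$ adds its arguments and subtracts $N-1$, and each bar subtracts a further $1$, the displayed function equals $f_1+\cdots+f_N-(N-1)-(c+1)$. Writing $\chi_i:=f_i-1$, this is $\bigl(\sum_{i=1}^N\chi_i\bigr)-c$, so it equals $\phi$ precisely when $\phi+c=\sum_{i=1}^N\chi_i$. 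As $f_i$ runs over the basic functions, $\chi_i=f_i-1$ runs over the \emph{elementary indicators}: $\mathbf 1_{\{b\}}$ for a point $b\ge 1$ (type {\bf B}); $\mathbf 1_{\{t\ge a\}}$ for $a\ge 2$ (type {\bf A}); $\mathbf 1_{\{t\equiv d\bmod m\}}$ for $m\ge 2$ and $0\le d<m$ (type {\bf C}); and the zero function (the constant function $1$ of type {\bf C}). Note also that $\mathbf 1_{\{t\ge 1\}}=\mathbf 1_{\{1\}}+\mathbf 1_{\{t\ge 2\}}$, so every non-negative constant function is a non-negative integer combination of elementary indicators.

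Thus the whole lemma reduces to the following combinatorial assertion: \emph{for every asymptotically periodic $\phi$ with $\phi(t)\ge 1$ for all $t$, there is an integer $c\ge 0$ and a representation $\phi+c=\sum_{i=1}^N\chi_i$ with the $\chi_i$ elementary indicators and $N\ge 2$.} Granting this and setting $f_i:=\chi_i+1$ (the constant function $1$ when $\chi_i=0$), we get $f_1\star\cdots\star f_N=(\phi+c)+1\ge 2$, and the $c+1$ applications of $\overline{\cdot}$ are all legal because each intermediate value $\phi+c+1-j$ with $0\le j\le c$ is still $\ge\phi+1\ge 2$; the last bar lands on $\phi$. (If $\phi$ is itself of type {\bf A}, {\bf B} or {\bf C}, there is nothing to prove.)

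To prove the assertion I would proceed as follows. Fix an eventual period $p$ of $\phi$; after replacing it by $2p$ assume $p\ge 2$. Let $\phi_\infty$ be the $p$-periodic function agreeing with $\phi$ for $t\gg 0$, so that $\epsilon:=\phi-\phi_\infty$ is integer-valued and supported on a finite initial segment. Set
\[
M:=\max_{t\ge 1}\max\bigl(0,\,-\epsilon(t)\bigr),\qquad c:=M,\qquad T_t:=\max_{s\ge t}\max\bigl(0,\,-\epsilon(s)\bigr)\ \ (t\ge 1),
\]
so $(T_t)$ is non-negative, non-increasing, eventually $0$, with $T_1=M$. Then $P_t:=\epsilon(t)+T_t\ge\max(\epsilon(t),0)\ge 0$ is finitely supported, and with $c_a:=T_a-T_{a+1}\ge 0$ one has $\sum_{a\le t-1}c_a=T_1-T_t=c-T_t$, whence
\[
\phi+c=\sum_{i=0}^{p-1}\phi_\infty(i)\,\mathbf 1_{\{t\equiv i\bmod p\}}\;+\;\sum_{t\ge 1}P_t\,\mathbf 1_{\{t\}}\;+\;\sum_{a\ge 1}c_a\,\mathbf 1_{\{t\ge a+1\}}.
\]
Every coefficient on the right is a non-negative integer (and $\phi_\infty(i)\ge 1$), every summand is an elementary indicator, and the total number of summands is at least $\sum_{i=0}^{p-1}\phi_\infty(i)\ge p\ge 2$. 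This gives the required representation.

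The step I expect to be the genuine obstacle is the one that forces the constant $c$ and the bars into the picture: $\phi$ may drop strictly below its eventual periodic pattern on the transient initial segment, so $\epsilon=\phi-\phi_\infty$ takes negative values and $\phi-1$ itself need not be a non-negative combination of elementary indicators — there really are such $\phi$ that cannot be obtained using $\star$ alone. Absorbing the negative part of $\epsilon$ into a sufficiently large additive constant, so that the pointwise corrections $P_t$ remain non-negative, and then pushing the values back down by repeated $\overline{\cdot}$, is precisely what resolves this; the remaining verifications are routine bookkeeping. One should also keep in mind that $\overline{\cdot}$ can never be applied to a single basic function (none is $\ge 2$ everywhere), which is why $N\ge 2$ is needed — should the representation ever consist of a single indicator, one simply appends the constant function $1$ (so $\chi_{N+1}=0$, $f_{N+1}=1$), which leaves $f_1\star\cdots\star f_N$ unchanged.
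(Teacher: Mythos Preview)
Your argument is correct. Both you and the paper pass to the shifted world where types {\bf A}, {\bf B}, {\bf C} become the indicator functions $\mathbf 1_{\{t\ge a\}}$, $\mathbf 1_{\{b\}}$, $\mathbf 1_{\{t\equiv d\bmod m\}}$ (and $0$), and both recognise that the essential difficulty is that $\phi$ may dip below its asymptotic periodic pattern $\phi_\infty$ on the transient segment, so some use of $\overline{\cdot}$ is unavoidable.

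The executions differ. The paper first writes an arbitrary non-negative function as a sum of $0$--$1$ functions and then, for each $0$--$1$ slice $\phi$, carefully constructs $\phi_1,\phi_2,\phi_3$ so that $\phi_1+\phi_2\ge 1$ everywhere and $\phi=\overline{\phi_1+\phi_2}+\phi_3$; thus only a single bar is needed per slice, nested inside the additions. You instead shift the entire function up by $c=\max_t\max(0,-\epsilon(t))$, exhibit $\phi+c$ directly as a non-negative integer combination of elementary indicators via the telescoping $T_t$, and then peel off $c+1$ bars at the very end. Your route avoids the reduction to $0$--$1$ functions and gives a single explicit formula, at the cost of using more bars (and hence, in the eventual application, more generic hyperplane sections); the paper's route is more parsimonious in the number of operations but less explicit. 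Both are entirely adequate for the purpose of Theorem~\ref{ubiquity}.

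Two minor remarks: your aside about $\mathbf 1_{\{t\ge 1\}}=\mathbf 1_{\{1\}}+\mathbf 1_{\{t\ge 2\}}$ is not actually used in your decomposition (the constant part is absorbed into the periodic indicators since $p\ge 2$), and the notation $\phi_\infty(i)$ for $i=0$ should be read as the common value on the residue class $0\bmod p$.
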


\begin{proof}
First, we note that the map $\phi(t) \to \overline \phi(t)$ gives an one-to-one correspondence between positive numerical functions and non-negative numerical functions with 
$$
\overline {(\phi\star\psi)}(t) = \overline\phi(t)+\overline\psi(t)
$$
Then we have to show that any asymptotically periodic non-negative numerical function is obtained from finitely many 0-1 functions of the following types by using the operation $\bar\phi$ with $\phi(t) \ge 1$ for all $t \ge 1$ and the usual addition of functions:
\begin{enumerate}
\item[{\bf A'}:] 0,...,0,1,1,... , which is a monotone function converging to 1,
\item[{\bf B'}:] 0,...,0,1,0,0,... , which has the value 1 at only a place,
\item[{\bf C'}:] 0,0,0,... or 0,..,0,1,0,..,0,0,..,0,1,0,..,0,... , which is a periodic function with a period of the form 0,..,0,1,0,..,0, where 1 can be at any position.
\end{enumerate}

Let $\phi(t)$ be an arbitrary asymptotically periodic non-negative numerical function.
It is obvious that $\phi(t)$ is a sum of asymptotically periodic numerical 0-1 functions.
Hence, we may assume that $\phi(t)$ is a 0-1 function.
We may further assume that $\phi(t)$ is not the function 0,0,0,..., i.e. $\phi(t) = 1$ for some $t \ge 1$. 
Let $c$ be the length of a period of $\phi(t)$ for $t \gg 0$. 
Choose $s$ to be a multiple of $c$ such that $\phi(t)$ is periodic for $t > s$ and $\phi(t) = 1$ for some $t \le s$.
Then there exists a unique periodic 0-1 function $\phi_1(t)$ such that $\phi_1(t) = \phi(t)$ for $t > s$. Let
$$
\phi_2(t) = \begin{cases}
0 &\text{if $t \le s$ and $\phi_1(t) = 1$},\\
1 &\text{otherwise}.
\end{cases}
$$
Then 
$$\phi_1(t) + \phi_2(t) = 
\begin{cases}
1 & \text{if $t \le s$},\\
\phi(t)+1 & \text{if $t > s$}.
\end{cases}
$$
Therefore, if we set
$$\phi_3(t) = 
\begin{cases}
\phi(t) &\text{if $t \le s$},\\
0 & \text{if $t > s$},
\end{cases}
$$
we have
$$\phi(t) = \phi_1(t) + \phi_2(t) + \phi_3(t) - 1 = {(\overline{\phi_1+\phi_2}})(t) + \phi_3(t).$$ 
It is clear that $\phi_1(t)$ is a sum of functions of type {\bf C'}. 
Since $\phi_2(t)$ is convergent to 1, $\phi_2(t)$ is a sum of a function of type {\bf A'} with functions of type {\bf B'}. Since $\phi_3(t)$ is convergent to 0 but not constant, $\phi_3(t)$ is a sum of functions of type {\bf B'}.
\end{proof} 

\begin{proof}[Proof of Theorem \ref{ubiquity}]
By Lemma \ref{simple}, any asymptotically periodic positive numerical function can be obtained from functions of types {\bf A, B, C} by the operations $\overline \phi$ with $\phi(t) \ge 2$ for all $t \ge 1$ and $\phi\star\psi$. 
By Corollary \ref{lower} and Corollary \ref{additive}, these operations preserve the property of being a symbolic depth function.
Therefore, any asymptotically periodic positive numerical function is the symbolic depth function of a homogeneous ideal.
\end{proof}

By the Auslander-Buchsbaum formula we have
$$\pd I^{(t)} = \dim R - \depth R/I^{(t)} -1.$$
Therefore, one can deduce from Theorem \ref{ubiquity} the following result on the behavior of the function $\pd I^{(t)}$.

\begin{thm} \label{pd}
Let $\psi(t)$ be an arbitrary asymptotically periodic non-negative numerical function and $m = \max_{t \ge 1}\psi(t)$.
Given a field $k$, there is a number $c$ such that there exist a polynomial ring $R$ in $m+c+2$ variables over a purely transcendental extension of $k$ and a
homogeneous ideal $I \subset R$ for which $\pd I^{(t)} = \psi(t) + c$ for all $t\ge 1$.
\end{thm}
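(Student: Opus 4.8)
The plan is to deduce Theorem \ref{pd} directly from Theorem \ref{ubiquity} together with the Auslander--Buchsbaum formula $\pd I^{(t)} = \dim R - \depth R/I^{(t)} - 1$ recalled above. The only content needed is that passing between $\depth R/I^{(t)}$ and $\pd I^{(t)}$ costs merely an additive shift by $\dim R - 1$; so prescribing a function $\psi(t)$ as $\pd I^{(t)}$ is the same problem as prescribing a suitably reflected and shifted function as $\depth R/I^{(t)}$, which is exactly what Theorem \ref{ubiquity} delivers.

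Concretely, first I would set $\phi(t) := m - \psi(t) + 1$. Since $0 \le \psi(t) \le m$ for all $t$, the function $\phi(t)$ takes values in $\{1, \dots, m+1\}$, hence is a \emph{positive} numerical function; and since $\psi(t)$ is asymptotically periodic, so is $\phi(t)$. The ``$+1$'' is precisely the shift needed to land in the class of positive functions covered by Theorem \ref{ubiquity} (recall that a symbolic depth function is always positive). Applying Theorem \ref{ubiquity} to $\phi(t)$ over the given field $k$ then produces a polynomial ring $R$ over a purely transcendental extension of $k$ and a homogeneous ideal $I \subset R$ with $\depth R/I^{(t)} = \phi(t)$ for all $t \ge 1$.

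Let $N$ denote the number of variables of $R$, so that $\dim R = N$. The Auslander--Buchsbaum formula gives, for every $t \ge 1$,
$$\pd I^{(t)} = N - \depth R/I^{(t)} - 1 = N - \phi(t) - 1 = N - (m - \psi(t) + 1) - 1 = \psi(t) + (N - m - 2).$$
Setting $c := N - m - 2$, we obtain $\pd I^{(t)} = \psi(t) + c$ for all $t \ge 1$, and $R$ is a polynomial ring in $N = m + c + 2$ variables, exactly as asserted.

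I do not expect a genuine obstacle here: the whole weight of the statement has been shifted onto Theorem \ref{ubiquity}, and what remains is a formal bookkeeping argument. The one point worth a remark is that the constant $c$ obtained this way is whatever emerges from the construction behind Theorem \ref{ubiquity}, and is presumably not the smallest $c$ for which such an ideal exists; determining the optimal $c$ for a given $\psi$ amounts to finding the least number of variables of a polynomial ring carrying an ideal with symbolic depth function $m - \psi(t) + 1$, which we leave open.
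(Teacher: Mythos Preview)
Your proof is correct and essentially identical to the paper's own argument: both set $\phi(t) = m - \psi(t) + 1$, invoke Theorem \ref{ubiquity}, and then read off $c = \dim R - m - 2$ from the Auslander--Buchsbaum formula. Even your closing remark about the non-optimality of $c$ mirrors the paper's discussion.
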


\begin{proof}
Set $\phi(t) = m - \psi(t) + 1$ for all $t \ge 1$. Then $\phi(t)$ is an asymptotically periodic positive numerical function.
By Theorem \ref{ubiquity}, there exist a polynomial ring $R$ over a purely transcendental extension of $k$ and a
homogeneous ideal $I \subset R$ such that $\depth R/I^{(t)} = \phi(t)$ for all $t \ge 1$.
Let $n$ be the number of variables of $R$. Set $c = n-m-2$.
Then 
$$\pd I^{(t)} = n - \phi(t) - 1 = n - m + \psi(t) - 2 = \psi(t) + c$$
for all $t \ge 1$. 
\end{proof}

Due to the use of reductions by generic linear forms in Corollary \ref{lower} and Corollary \ref{additive}, the constructed ideal with a given symbolic depth function is a non-monomial ideal in a polynomial ring over a purely transcendental extension of $k$. Using Proposition \ref{uncountable}, we can construct such an ideal in a polynomial ring over any uncountable field.  This leads us to the following question to which we could not give any answer.

\begin{quest} \label{field}
Given any asymptotically periodic positive numerical function $\phi(t)$, 
do there exist a polynomial ring $R$ over {\em any field} and a {\em monomial ideal} $I \subset R$ such that $\depth R/I^{(t)} = \phi(t)$ for all $t \ge 1$?
\end{quest}

Note that the analogous question for the depth function of the ordinary powers of a homogeneous ideal has a positive answer \cite[Theorem 4.1]{HNTT}. \par

Another issue is the smallest number $n$ of variables of a polynomial ring $R$ which contains a homogeneous ideal $I$ with a given symbolic depth function. This number determines the smallest number $c$ in Theorem \ref{pd}.
The proof of Theorem \ref{ubiquity} uses a high number of variables compared to the values of $\depth R/I^{(t)}$.
However, for all constructed symbolic depth functions of types {\bf A, B, C} (except 1,1,1,..), we have $n = 5$ and $\height I = 2$. 
Inspired by this fact we raise the following question.

\begin{quest} \label{variables}
Let $\phi(t)$ be an asymptotically periodic positive numerical function and $m = \max_{t\ge1} \phi(t)$.
Does there exist a polynomial ring $R$ in $m+3$ variables that contains a height 2 homogeneous ideal $I$
such that $\depth R/I^{(t)} = \phi(t)$ for all $t \ge 1$?
\end{quest}

\begin{ex} 
Let $\phi(t)$ be the numerical function 2,1,2,2,.... Then $\phi(t) = \phi_1(t) + \phi_2(t) - 1,$
where $\phi_1(t)$ and $\phi_2(t)$ are the functions 1,1,2,2,... and 2,1,1,1,....  By the proof of Lemma \ref{simple} and Theorem \ref{ubiquity}, we can construct a height 2 homogeneous ideal in 8 variables having the symbolic depth function $\phi(t)$. Now we are going to construct a height 2 homogeneous ideal in 5 variables having the same symbolic depth function $\phi(t)$. \par

Let $R=k[x,y,z,u,v]$ and $I=M\cap (P,u)\cap (Q,v)$, where
\begin{align*}
M &= (x^7,y^7,x^2y^2z,z^5)^2,\\
P &= (x^7,y^7),\\
Q &= (z^2).
\end{align*}
We claim that 
\[
\depth R/I^{(t)}= \begin{cases}
                  1, &\text{if $t=2$},\\
                  2, &\text{otherwise}.
                 \end{cases}
\]
By Proposition 4.1, we have to show that $M^t\subseteq P^t+Q^t$ if and only if $t\neq 2$.

It is clear that $M\subseteq P+Q$. Since
$$
x^{13}y^6z^3 = x^7(x^2y^2z)^3 \in (x^7,y^7,x^2y^2z,z^5)^4 = M^2,
$$
$$x^{13}y^6z^3\notin (x^7,y^7)^2+(z^4)=P^2+Q^2,$$
we have $M^2\not\subseteq P^2+Q^2$.  

For $t \ge 3$, we first note that a minimal monomial generator of $M^t$ has the form
\[
f=(x^7)^i(y^7)^j(x^2y^2z)^{\ell}(z^5)^{2t-i-j-\ell}=x^{7i+2\ell}y^{7j+2\ell}z^{10t-5(i+j)-4\ell}
\]
where $0\le i,j,\ell$ and $i+j+\ell \le 2t$.
Therefore, $M^t\subseteq P^t+Q^t=(x^7,y^7)^t+(z^{2t})$ if and only if the following system has no solution:
\begin{align}
0\le i,j,\ell, i+j+\ell &\le 2t, \label{eq_63_cond1}\\
\left\lfloor \frac{7i+2\ell}{7} \right\rfloor + \left\lfloor \frac{7j+2\ell}{7} \right\rfloor &< t, \label{eq_63_cond2}\\
10t-5(i+j)-4\ell & < 2t. \label{eq_63_cond3}
\end{align}
Assume that this system has a solution for some $t\ge 3$. Then the sum of the two fractions inside the integral parts of \eqref{eq_63_cond2} must be less than $t+1$. Hence,
$
7(i+j)+4\ell < 7(t+1), 
$
which implies
\begin{equation}
\label{eq_63_cond4}
4\ell \le 7t+6-7(i+j).
\end{equation}
From \eqref{eq_63_cond3} we get
\begin{equation}
\label{eq_63_cond5}
8t \le 5(i+j)+4\ell -1.
\end{equation}
Combining \eqref{eq_63_cond4} and \eqref{eq_63_cond5}, we see that
\[
8t\le 5(i+j)-1+7t+6-7(i+j),
\]
which implies
\begin{equation}
\label{eq_63_cond6}
2(i+j)+t \le 5.
\end{equation}

If $i+j=0$,  from \eqref{eq_63_cond5} we get $8t\le 4\ell-1$, so $2t<\ell$. This contradicts \eqref{eq_63_cond1}.

If $i+j\ge 1$, then \eqref{eq_63_cond6} forces $i+j=1$ and $t=3$. From \eqref{eq_63_cond1}  we get $\ell \le 5$. From \eqref{eq_63_cond5} we get $\ell \ge 5$. Hence, $\ell=5$. Since $i+j=1$, we have $\{i,j\}=\{0,1\}$. Consequently, \eqref{eq_63_cond2} implies
\[
3=\left\lfloor \frac{17}{7} \right \rfloor + \left \lfloor \frac{10}{7} \right \rfloor < 3.
\]
This is a contradiction. Hence, $M^t \subseteq P^t+Q^t$ for $t\ge 3$, as required. \par

As a consequence, we have 
\[
\pd I^{(t)}= \begin{cases}
                  3, &\text{if $t=2$},\\
                  2, &\text{otherwise}.
                 \end{cases}
\]
Therefore, if $\psi(t)$ is the numerical function $2,3,2,2,...$, we can choose $c = 0$ in Theorem \ref{pd}, while its proof only yields $c = 3$.
\end{ex}
 
Question \ref{variables} has probably a negative answer for $m+2$ variables. For, if $\dim R = m+2$ then $I^{(t)}$ is a Cohen-Macaulay ideal for those $t$ for which $\depth R/I^{(t)} = m$. By Hilbert-Burch structure theorem, $I^{(t)}$ is  generated by the maximal minors of an $r \times (r+1)$ matrix for some $r \ge 1$. That would affect the structure of other symbolic powers of $I$. For this reason, the symbolic depth function of $I$ might not be arbitrary. 
 
\begin{rem} 
Suppose that Question \ref{variables} has a positive answer.
Then we can choose $c = 2-\min_{t \ge 1}\psi(t)$ in the proof of Theorem \ref{pd}. 
Therefore, given any asymptotically periodic non-negative numerical function $\psi(t)$,
there exist a polynomial ring $R$ and a height 2 homogeneous ideal $I \subset R$ such that 
$$\pd I^{(t)} = \psi(t) -\min_{t \ge 1}\psi(t)+2$$
for all $t \ge 1$.
\end{rem}

Finally, we would like to raise the following problem.

\begin{quest} 
Does there exist a homogeneous ideal whose symbolic depth function is not asymptotically periodic?
\end{quest}

By Proposition \ref{Rees}, the symbolic Rees algebra of such an ideal has to be non-noetherian.
To find non-noetherian symbolic Rees algebras is a hard problem, which is related to Hilbert's fourteenth
problem \cite{Ro}.
As far as we know, there are only examples of non-noetherian symbolic Rees algebras for one-dimensional ideals  (see e.g. \cite{Cu, Hu, Ro}). In this case, we have $\depth R/I^{(t)} = 1$ for all $t \ge 1$, which implies that the symbolic depth function is a constant function.



\begin{thebibliography}{9999}

\bibitem{BHH} S. Bandari, J. Herzog, and T. Hibi,
\emph{Monomial ideals whose depth function has any given number of strict local maxima},
Ark. Mat. {\bf 52} (2014), 11--19.

\bibitem{Bj} A. Bj\"orner,
\emph{Topological Methods}, in: Handbook of Combinatorics (ed. R. Graham, M. Gr\"otschel, L. Lov\'asz), North-Holland, 1995, 1819--1872.

\bibitem{Br} M. Brodmann,
\emph{The asymptotic nature of the analytic spread},
Math. Proc. Cambridge Philos. Soc. {\bf 86} (1979), no. 1, 35--39.

\bibitem{BH} W. Bruns and J. Herzog, 
{\em Cohen-Macaulay rings}, Cambridge University Press, 1998.

\bibitem{BV} W. Bruns and U. Vetter, 
{\em Determinantal rings}, Lect. Notes Math. {\bf 1327}, Springer, 1980.

\bibitem{CPSTY} A. Constantinescu, M.R. Pournaki, S.A. Seyed Fakhari, N. Terai, and S. Yassemi,
{\em Cohen-Macaulayness and limit behavior of depth for powers of cover ideals},
Comm. in Algebra {\bf 43} (2015), 143--157.

\bibitem{Cu} S.D. Cutkosky,
{\em Symbolic algebras of monomial primes}, 
J. Reine Angew. Math. 416 (1991), 71--89.

\bibitem{ELS} L. Ein, R. Lazarsfeld, and K. Smith, 
{\em Uniform bounds and symbolic powers on smooth varieties},
Invent. Math. {\bf 144} (2001), 241--252.

\bibitem{EH} D. Eisenbud and M. Hochster, 
{\em A Nullstellensatz with nilpotents and Zariski's main lemma on holomorphic functions}, 
J. Algebra {\bf 58} (1979), 157--161.

\bibitem{EM} D. Eisenbud and B. Mazur,
{\em Evolutions, symbolic squares, and Fitting ideals}, 
J. Reine Angew. Math. {\bf 488} (1997), 189--201.

\bibitem{HNTT}
H.T. H\`a, H.D. Nguyen, N.V. Trung and T.N. Trung,
\emph{Depth functions of powers of homogeneous ideals},
Preprint, arXiv:1904.07587.

\bibitem{HTr} H.T. H\`a and N.V. Trung,
{\em Membership criteria and containments of powers of monomial ideals},
Acta Math. Vietnam. {\bf 44} (2019), 117--139.

\bibitem{HH} J. Herzog and T. Hibi, 
\emph{The depth of powers of an ideal},
J. Algebra {\bf 291} (2005), 534--550.

\bibitem{HHT} J. Herzog, T. Hibi, and N.V. Trung,
\emph{Symbolic powers of monomial ideals and vertex power algebras}, 
Advances in Math. {\bf 210} (2007), 304--322. 

\bibitem{HQ} J. Herzog and A.A. Qureshi, 
{\em Persistence and stability properties of powers of ideals}, 
J. Pure Appl. Algebra {\bf 229} (2015), 530--542.

\bibitem{HLT} H.T.T. Hien, H.M. Lam, and N.V. Trung,
{\em Saturation and associated primes of powers of edge ideals}, 
J. Algebra {\bf 439} (2015), 225--244.

\bibitem{HKTT} L.T. Hoa, K. Kimura, N. Terai, and T.N. Trung,
\emph{Stability of depths of symbolic powers of Stanley-Reisner ideals},
 J. Algebra 473 (2017), 307-323.
 
\bibitem{HT} L.T. Hoa and N.D. Tam,
{\em On some invariants of a mixed product of ideals},
Arch. Math. {\bf 94} (2010), 327--337.

\bibitem{HT1} L.T. Hoa and T.N. Trung, 
{\em Partial Castelnuovo-Mumford regularities of sums and intersections of powers of monomial ideals}, 
Math. Proc. Cambridge Philos. Soc. {\bf 149} (2010), 1--18.

\bibitem{Ho} M. Hochster, 
{\em Rings of invariants of tori, Cohen-Macaulay rings generated by monomials, and polytopes}, 
Ann. of Math. {\bf 96} (1972), 318--337.

\bibitem{HoHu} M. Hochster and C. Huneke, 
{\em Comparison of symbolic and ordinary powers of ideals},
Invent. Math. {\bf 147} (2002), 349--369.

\bibitem{Hu} C. Huneke, 
{\em On the finite generation of symbolic blow-ups},
Math. Z. {\bf 179} (1982), 465--472. 
 
\bibitem{HS} C. Huneke and I. Swanson, 
{\em Integral closure of Ideals, Rings and Modules}, 
Cambridge University Press, 2006.

\bibitem{KSS} T. Kaiser, M. Stehlik, and R. Skrekovski, 
{\em Replication in critical graphs and the persistence of monomial ideals},
 J. Combin. Theory Ser. A {\bf 123} (2014), 239--251.
 
\bibitem{KTY} K. Kimura, N. Terai and S. Yassemi, 
The projective dimension of symbolic powers of the edge ideal of a very well-covered graph, 
Nagoya Math. J. {\bf 230} (2018), 160--179.

\bibitem{MST}
K. Matsuda, T. Suzuki and A. Tsuchiya,
\emph{Nonincreasing depth functions of monomial ideals},
Glasgow Math. J. {\bf 60} (2018), 505--511.

\bibitem{MT} N.C. Minh and N.V. Trung,
{\em Cohen-Macaulayness of monomial ideals and symbolic powers of Stanley-Reisner ideals}, 
Adv. Math. {\bf 226} (2011), 1285--1306.

\bibitem{MV} S. Morey and R.H. Villarreal, 
{\em Edge ideals: algebraic and combinatorial properties}, 
in: Progress in Commutative Algebra, Combinatorics and Homology, Vol. 1, De Gruyter,
2012, 85--126.

\bibitem{Nh} D.V. Nhi,
{\em Specializations of direct limits and of local cohomology modules},
Proc. Edinburgh Math. Soc. {\bf 50} (2007), 459--475.

\bibitem{NT} D.V. Nhi and N.V. Trung, {\em Specialization of modules}, Comm. Algebra {\bf 27} (1999), 2959--2978.

\bibitem{Ro} P. Roberts, 
{\em A prime ideal in a polynomial ring whose symbolic blow-up is not Noetherian}, 
Proc. Amer. Math. Soc. {\bf 94} (1985), 589--592.

\bibitem{Sc} A. Schrijver,
{\em The theory of linear and integer programming},
John Wiley \& Sons, 1999.

\bibitem{Se} A. Seidenberg, 
{\em The hyperplane sections of normal varieties}, 
Trans. Amer. Math. Soc. {\bf 69} (1950), 357--386.

\bibitem{Ta} Y. Takayama, 
{\it Combinatorial characterizations of generalized Cohen-Macaulay monomial ideals}, 
Bull. Math. Soc. Sci. Math. Roumanie (N.S.) {\bf 48} (2005), 327--344.

\bibitem{TT} N. Terai and N.V. Trung,
{\em On the associated primes and the depth of the second power of squarefree monomial ideals}, J. Pure Appl. Algebra {\bf 218} (2014), 1117-1129.

\bibitem{TT2} N. Terai and N.V. Trung,
{\em Cohen-Macaulayness of large powers of Stanley-Reisner ideals},
Adv. Math. {\bf 229} (2012), 711--730.

\bibitem{Tr} N.V. Trung,
\emph{\"Uber die \"Ubertragung der Ringeigenschaften zwischen $R$ und $R[u]/(F)$}.
Math. Nachr. {\bf 92} (1978), 215--224.

\bibitem{Tr1} N.V. Trung,
{\em Squarefree monomial ideals and hypergraphs}, Lectures Report to AIM, Workshop on Integral Closure, Adjoint Ideals and Cores, Palo Alto, December 2006, https://aimath.org/WWN/integralclosure/Trung.pdf

\bibitem{Tr2} N.V. Trung, 
{\em Hypergraphs, polyhedra and monomial ideals}, 
Proceedings of the 5th Joint Japan-Vietnam on Commutative Algebra, 
Institute of Mathematics, Hanoi, 2010, 15--29.

\bibitem{TI} N.V. Trung and S. Ikeda, 
{\em When is the Rees algebra Cohen-Macaulay?},
Comm. Algebra {\bf 17} (1997), 2893--2922.

\bibitem{Va} M. Varbaro,
{\em Symbolic powers and matroids}, 
Proc. Amer. Math. Soc. {\bf 139} (2011), 2357--2366.

\end{thebibliography}
\end{document}